\documentclass[11pt,reqno]{article}
\usepackage[a4paper,margin=1in]{geometry}
\usepackage{amsmath,amssymb,amsthm,mathtools}
\usepackage{bm}
\usepackage{graphicx}
\usepackage{enumitem}
\usepackage[numbers,sort&compress]{natbib}
\usepackage{hyperref}
\usepackage{float}
\usepackage{booktabs}
\usepackage{multirow}

\hypersetup{colorlinks=true,linkcolor=blue,citecolor=blue,urlcolor=blue}

\newtheorem{theorem}{Theorem}[section]
\newtheorem{lemma}[theorem]{Lemma}

\theoremstyle{definition}
\newtheorem{definition}[theorem]{Definition}
\newtheorem{remark}[theorem]{Remark}
\newtheorem{assumption}[theorem]{Assumption}

\DeclareMathOperator{\diag}{diag}
\newcommand{\bu}{\bm{u}}
\newcommand{\bw}{\bm{w}}
\newcommand{\bv}{\bm{v}}
\newcommand{\bff}{\bm{f}}
\newcommand{\be}{\bm{e}}
\newcommand{\eps}{\varepsilon}
\newcommand{\Om}{\Omega}
\newcommand{\dt}{\delta t}
\newcommand{\R}{\mathbb{R}}
\newcommand{\N}{\mathbb{N}}
\newcommand{\norm}[1]{\lVert#1\rVert}

\newcommand{\Ck}{C_k}
\newcommand{\Ak}{A_k}
\newcommand{\Bk}{B_k}
\newcommand{\Dk}{D_k}
\newcommand{\Fk}{F_k}
\newcommand{\Reyn}{\mathrm{Re}}
\newcommand{\HS}{\cite{HuangShen2025}}
\DeclareMathOperator{\diver}{div}

\title{A spectral-vanishing-viscosity stabilization
of a higher-order consistent splitting scheme 
for the Navier--Stokes equations}

\author{M Nader Alhomsi\footnote{Department of Mathematics,  Central Michigan University, 
Mount Pleasant, MI 48858. Email: Alhom1n@cmich.edu}\,,
Akram Moustafa\footnote{Department of Computer Science,  Central Michigan University, 
Mount Pleasant, MI 48858. Email: moust1am@cmich.edu}\,,
Mohammad Al-Saqqa\footnote{Department of Mathematics,  Central Michigan University, 
Mount Pleasant, MI 48858. Email: alsaq1m@cmich.edu}\,,
 Jiahong Wu\footnote{Department of Mathematics,  University of Notre Dame, Notre Dame, IN 46556. Email: jwu29@nd.edu}\,
and Xiaoming Zheng\footnote{Department of Mathematics, Central Michigan University, Mount Pleasant, MI 48858. Email: zheng1x@cmich.edu}
}
\date{}
\begin{document}
\maketitle

\begin{abstract}
Huang and Shen~\cite{HuangShen2025} developed a novel class of high-order BDF--IMEX consistent-splitting schemes for the incompressible Navier--Stokes equations, giving the first rigorous stability and error analysis for a fully decoupled splitting scheme of temporal order higher than two. Extending their analysis from unit viscosity to arbitrary viscosity, this work reveals that the error upper bound coefficient contains inverse powers of the viscosity. Our numerical experiments further confirms that the scheme can break down at high Reynolds number. To prevent this failure, we stabilize the scheme by adding to the velocity update a symmetric positive-semidefinite spectral vanishing viscosity operator, built from the directionally applied Maday--Kaber--Tadmor kernel, which selectively damps the high, under-resolved modes at no additional asymptotic cost. We establish stability and error estimates for the stabilized scheme in which the spectral vanishing viscosity provides viscosity-independent coercive control of the high modes.
Three two-dimensional tests demonstrate the robustness and accuracy of the stabilized scheme. For a manufactured solution, the stabilized scheme retains its design order for $k=2,3,4$, whereas the unstabilized scheme diverges. For a perturbed Kovasznay flow, it accurately resolves the boundary layer at $\Reyn=10^4$ and drives the perturbation back to the steady state, while the unstabilized scheme blows up. For the Kelvin--Helmholtz instability problem, it reproduces the reference integral diagnostics throughout the reliable regime, whereas the unstabilized scheme produces spurious solutions or blows up. 
\end{abstract}

\textbf{Keywords:} Spectral vanishing viscosity; Consistent splitting scheme; BDF--IMEX time discretization; Incompressible Navier--Stokes equations; Error estimates and stability analysis; High Reynolds number robustness; perturbed Kovasznay problem; 2D Kelvin--Helmholtz instability problem

\section{Introduction}

The unsteady incompressible Navier--Stokes equations on a bounded domain
$\Om\subset\R^d$ ($d=2,3$),
\begin{equation}
\label{eq:NSE-intro}
\partial_t\bu + \bu\cdot\nabla\bu = -\nabla p + \nu\Delta\bu + \bff,
\qquad \diver\bu = 0,
\end{equation}
with boundary condition $\bu|_{\partial\Om}=\bm 0$
admit at most one strong solution under standard hypotheses on $\bff$ and
$\bu(0)$ \cite{Temam1984}, and the design of higher-order time-stepping schemes that are
both stable and accurate at large Reynolds number $\Reyn\sim 1/\nu$ remains an
ongoing challenge in computational fluid dynamics \cite{XuPasquetti2004,Pasquetti2006}.
Numerical methods for \eqref{eq:NSE-intro} fall broadly into fully
coupled mixed formulations
\citep{GiraultRaviart1979,BrezziFortin1991,ElmanSilvesterWathen2014},
which solve for the velocity and pressure simultaneously, and decoupled
strategies that split them at each step: projection and
pressure-/velocity-correction methods
\citep{Chorin1968,ELiu1995,KarniadakisIsraeliOrszag1991,Shen1992,TimmermansMinevVandeVosse1996,GuermondShenVelocity2003,GuermondShen2004,GuermondSalgado2011,GuermondMinevSalgado2012,OrszagIsraeliDeville1986,Prohl1997,Shen2012},
gauge methods \citep{ELiuGauge2003,WangLiu2000,NochettoPyo2005}, and
consistent splitting methods
\citep{GuermondShen2003,ShenYang2007,JohnstonLiu2004,WuHuangShen2022,HuangShen2023b,HuangShen2025};
see \citet{GuermondMinevShen2006} for an overview of the decoupled
approach. Fully coupled formulations require solving a large, indefinite
velocity--pressure saddle-point system at every time step, which is
computationally expensive and demands specialized solvers and
preconditioners. Splitting (or pressure-correction) schemes instead decouple
this system into a sequence of standard elliptic problems at each step, a
convection--diffusion solve for each velocity component and a Poisson solve
for the pressure, which can be handled by fast, well-established solvers and
scale well to large problems, accounting for their wide use in large-scale
simulation \citep{Chorin1968,KarniadakisIsraeliOrszag1991,GuermondMinevShen2006}.
Classical projection methods incur a well-known \emph{splitting error},
caused by an inconsistent artificial pressure boundary condition, that
limits their accuracy in the velocity $H^1$ norm and in the pressure
\citep{GuermondMinevShen2006,GuermondShen2004}. Consistent splitting
schemes \citep{GuermondShen2003,HuangShen2025} were introduced to remove
it. One central difficulty of numerical methods for turbulent flows, present even for fully coupled discretizations, is robustness, namely whether the coefficients in the error upper bound contain inverse powers of the viscosity \citep{GarciaArchillaJohnNovo2021, deFrutosGarciaArchillaJohnNovo2016, AlhomsiWuZheng2026}.

\citet{HuangShen2025} recently introduced a novel class of higher-order implicit-explicit (IMEX), consistent splitting schemes for \eqref{eq:NSE-intro} that employ Taylor-shifted backward differentiation formulas (BDF) of order
$k\in\{2,3,4\}$ followed by a curl--curl pressure Poisson update. The construction builds on their framework of generalized BDF/IMEX multipliers for parabolic problems \citep{HuangShen2024},  with a second-order such scheme implemented for the Navier--Stokes equations \citep{HuangShen2023b} and a perturbed Boussinesq system \cite{alhomsiGSAVBoussinesq2026}. Under the splitting,  the velocity step is implicit on the viscous term and explicit on the convection term (IMEX), while the pressure step is a decoupled discrete Poisson problem. These provide the first rigorous stability and convergence analysis, with optimal global-in-time error estimates in both two and three dimensions, for a fully decoupled splitting scheme of order higher than two for the Navier--Stokes equations. These estimates, however, are not robust in the inviscid limit: as shown in Theorem\,\ref{thm:main}, the constant in the error upper bound contains negative powers of the viscosity.

A closely related work, \citet{GarciaArchillaJohnNovo2025},  utilizes the classical BDF time stepping (the $\beta=1$ member of the Taylor-shifted family), a fully coupled and implicit velocity--pressure  discretization for the Navier--Stokes equations in a  finite-element setting.
They prove optimal-order error bounds in time (not in space) with inf-sup stable mixed elements,
relying, exactly as \cite{HuangShen2025}, on the G-stability of BDF as the central temporal tool and reaching comparably high order in time.
Most relevant for the present paper is the route to $\nu$-robustness: \citet{GarciaArchillaJohnNovo2025}
uses fully implicit skew-symmetric convection augmented with grad--div stabilization, which yields error constants
independent of inverse powers of $\nu$. By contrast, the estimate for the present splitting scheme
(Section~\ref{sec:HS}, Theorem~\ref{thm:main}) still retains the $\nu^{-5}$ degeneracy inherited from \cite{HuangShen2025}. The spectral-vanishing-viscosity term introduced in Section~\ref{sec:scheme} plays, for the spectral consistent splitting scheme, the same coercive stabilizing role that grad--div plays in the
coupled finite-element analysis of \citet{GarciaArchillaJohnNovo2025}.

Because the splitting method in \cite{HuangShen2025} is not robust as the viscosity tends to zero, we propose the spectral vanishing viscosity (SVV) technique as a remedy.
The idea of SVV originates with \citet{Tadmor1989}, who introduced spectral vanishing
viscosity for the Fourier approximation of nonlinear scalar conservation laws as a
way to reconcile two competing demands: enough dissipation to
enforce the entropy condition and recover the physically relevant weak solution,
yet little enough that spectral accuracy is preserved. The mechanism is to activate an
artificial viscosity \emph{only} above a cut-off wavenumber $m_N$, with an
amplitude $\eps_N\to 0$ as $N\to\infty$. \citet{MadayKaberTadmor1993} carried the
construction to the non-periodic Legendre pseudo-spectral setting and supplied the
smooth kernel in common use since, which we adopt in \eqref{eq:MKTkernel}, together with
the scalings $\eps_N\sim N^{-1}$, $m_N\sim\sqrt N$. Convergence for
multidimensional conservation laws was established by
\citet{ChenDuTadmor1993,GuoMaTadmor2001}, and the method was extended to
Chebyshev discretizations by \citet{Andreassen1994}, who also gave the first
two-dimensional fluid application (waves in a stratified atmosphere). We refer to
\citet{Tadmor1998} for a survey of approximate solutions of nonlinear conservation laws.

The transfer of SVV from conservation laws to the incompressible Navier--Stokes
equations was made by \citet{KaramanosKarniadakis2000}, who observed that in an
under-resolved simulation a spectral method interprets a steep gradient as a
discontinuity, so that the same entropy-dissipation argument applies. They
formulated SVV for spectral/$hp$ elements and validated it on the Kovasznay flow
and on turbulent channel flow at $\Reyn_\tau=180$ and $395$. The method was then
developed systematically by Pasquetti, Xu and co-workers into a practical tool for
\emph{high Reynolds number} computation: \citet{PasquettiXu2002} and
\citet{XuPasquetti2004} showed that SVV stabilizes spectral-element computations
that are otherwise unstable, while preserving exponential convergence, and applied
it to the turbulent wake of a cylinder \citep{Xu2006,Pasquetti2006,RongXu2009},
with \citet{KirbySherwin2006} establishing the same for spectral/$hp$ elements.
Used as a subgrid model, SVV has since been applied to turbulent flows
at Reynolds numbers beyond the reach of direct simulation:
\citet{SeveracSerre2007} computed the three-dimensional turbulent rotor--stator
cavity at $\Reyn = 7\times10^{4}$--$7\times10^{5}$, matching both DNS and
experiment, \citet{MinguezPasquettiSerre2008} performed an SVV-LES of the flow
over the Ahmed body at $\Reyn\approx 7.7\times10^{5}$, and
\citet{ChenPasquettiXu2021} extended SVV to triangular spectral elements and
computed the backward-facing step at $\Reyn=10^{4}$ and $5\times 10^{4}$, reporting
that the unstabilized spectral element method is \emph{unstable} at the higher
Reynolds number while the SVV-stabilized method is not, the same dichotomy we
report in Section~\ref{sec:numerics}. \citet{MouraSherwinPeiro2016} give an
eigensolution analysis of SVV-stabilized advection--diffusion and caution that not
every kernel behaves as intended. The essential point that motivates the present
work is that spectral methods are much less numerically dissipative than low-order
methods, so the energy that should be dissipated at the grid scale
instead \emph{accumulates} there and causes the computation to break down, the
failure mode we document in Section~\ref{sec:numerics} for the bare scheme.

Two points from this literature bear directly on what follows. First, the
one-dimensional theory is well established, but the extension of the kernel to several
dimensions is \emph{not} canonical: \citet[\S3.2]{SeveracSerre2007} note that
``there is not a direct way'' to extend the one-dimensional definition, and
different authors have made different choices. We adopt the directional
(diagonal) form of \citet{SeveracSerre2007} and \citet{ChenPasquettiXu2021}; see
\eqref{eq:dirkernel}. Second, essentially all of the work above is
\emph{computational}: SVV is used as a stabilization or subgrid device and its
effect on the flow is assessed a posteriori. What is missing, and what this paper
supplies, is a convergence analysis of an SVV-stabilized \emph{higher-order splitting
scheme} in which the SVV term is shown to enter the energy estimate as a coercive
contribution, the role played by grad--div in the coupled finite-element
analysis of \citet{GarciaArchillaJohnNovo2025}.

We propose, analyze, and validate an SVV-stabilized variant of the \cite{HuangShen2025} BDF--IMEX schemes of orders $k=2,3,4$. Section~\ref{sec:HS} recalls the
scheme and the source of its $\nu^{-5}$ error estimate,
Section~\ref{sec:scheme} introduces the spectral-vanishing-viscosity
stabilization and its efficient implementation,
Section~\ref{sec:analysis} establishes the corresponding energy and error
estimates, Section~\ref{sec:numerics} reports numerical experiments on a
manufactured-solution convergence test, the perturbed Kovasznay flow, and
the Kelvin--Helmholtz problem, and Section~\ref{sec:concl} collects the
conclusions.

\section{The consistent splitting scheme and its SVV stabilization}
\label{sec:consistentsplitting}
\subsection{The higher-order BDF/IMEX consistent splitting scheme in \cite{HuangShen2025} }
\label{sec:HS}

Let $\Om\subset\R^d$ ($d=2, 3$) be a bounded domain. We consider the Navier--Stokes initial--boundary
value problem
\begin{equation}\label{eq:NSE}
\partial_t\bu + \bu\cdot\nabla\bu - \nu\Delta\bu + \nabla p = \bff,
\quad \diver\bu = 0,
\quad \bu|_{\partial\Om} = \bm 0,
\quad \bu(0)=\bu_0,
\end{equation}
on $(0,T)\times\Om$. 
We denote by $(\cdot,\cdot)$ the $L^2$ inner product on $\Omega$, and by $\norm{\cdot}$, $\norm{\cdot}_1,\norm{\cdot}_2$  the norms of $L^2(\Omega)$, $H^1(\Omega)$,  and $H^2(\Omega)$, respectively.
The product spaces are  $\bm L^2(\Omega)=(L^2(\Omega))^d$, $\bm H^1_0(\Omega)=(H^1_0(\Omega))^d$, $\bm H^2(\Omega)= (H^2(\Omega))^d$, etc. 
In the theoretical analysis, we adopt the following assumptions.
\begin{assumption}\label{ass:smooth}
Let $\Om\subset\mathbb R^d$ ($d\le3$) be a connected bounded domain with $C^3$ boundary. 
The initial velocity satisfies $\bu^0\in\bm H^1_0(\Omega)\cap\bm H^2(\Omega)$ with $\nabla\cdot \bu^0=0$.
The velocity $\bu$  satisfies
$\bu\in L^\infty(0,T;\bm H^1_0(\Omega)\cap\bm H^2(\Omega))$, $\partial^k_t\bu\in L^\infty(0,T;\bm H^2(\Omega))$,
$\partial_t^{k+1}\bu\in L^\infty(0,T;\bm L^2(\Omega))$,
and $\partial_t^k(\bu\!\cdot\!\nabla\bu)\in L^2(0,T;\bm  L^2(\Omega))$. 
The pressure satisfies $p\in L^\infty(0,T;H^1(\Omega)/\R)$,
$\partial_t^k p\in L^2(0,T;H^1(\Omega))$.
The external force satisfies $\bff\in L^\infty(0,T;\bm L^2(\Omega))$.
\end{assumption}

The higher-order Taylor-shifted consistent splitting scheme of \cite{HuangShen2025} is given as follows. For each order $k\in\{2,3,4\}$, one chooses a
Taylor-shift parameter $\beta_k\in\N$ (the values $\beta_2=3,\beta_3=6,\beta_4=9$ are recommended). 
Set $t^n = n\dt$ and denote $\bm u^n$ and $p^n$ as the numerical solution at $t^n$. 
For $k=2,3,4$ the scheme reads: given $\bu^{n-k+1},\dots,\bu^n$ and
$p^{n-k+1},\dots,p^n$, $n\ge k-1$, 
find $\bu^{n+1}$ and $p^{n+1}$ satisfying
\begin{subequations}\label{eq:HS-scheme}
\begin{align}
\frac{\Ak(\bu^{n+1})}{\dt} &- \nu\Delta\Bk(\bu^{n+1}) + \nabla \Ck(p^n)
+ \Ck(\bu^n)\!\cdot\!\nabla \Ck(\bu^n) = \bff^{n+\beta_k},
\label{eq:HS-vel}\\
(\nabla p^{n+1},\nabla q) &= (\bff^{n+1}-\bu^{n+1}\!\cdot\!\nabla\bu^{n+1},\nabla q)
   - \nu(\nabla\times\nabla\times\bu^{n+1},\nabla q),
\quad\forall q\in H^1(\Om).
\label{eq:HS-pres}
\end{align}
\end{subequations}
Given a sequence $\{\phi^n\}$,  $A_k(\phi^{n+1})/\delta t$ is a $k$-th order approximation of $\partial_t \phi(t^{n+\beta_k})$ by using $\{\phi^{n+1-k}, \cdots, \phi^{n+1}\}$, $B_k(\phi^{n+1})$ is a $k$-th order implicit extrapolation to fit $\phi(t^{n+\beta_k})$ through $\{\phi^{n+2-k}$, $\cdots, \phi^{n+1}\}$, and $C_k$ is a $k$-th order explicit extrapolation to approximate $\phi(t^{n+\beta_k})$ via $\{\phi^{n+1-k}, \cdots, \phi^{n}\}$.
Their concrete forms are provided in \cite{HuangShen2025} and listed in Table\,\ref{tab:HS-coefs}.
\begin{table}[htbp]
\centering
\small
\setlength{\tabcolsep}{4pt}
\renewcommand{\arraystretch}{1.5}
\caption{The operators $A_k$, $B_k$, $C_k$ of the BDF--IMEX consistent-splitting scheme in \cite{HuangShen2025} }
\label{tab:HS-coefs}
\begin{tabular}{|c|l|l|l|}
\hline
 & $k=2\ (\beta_2=3)$ & $k=3\ (\beta_3=6)$ & $k=4\ (\beta_4=9)$\\
\hline
$A_k(\phi^{n+1})$
 & $\tfrac{5}{2}\phi^{n-1}-6\phi^{n}+\tfrac{7}{2}\phi^{n+1}$
 & $\begin{array}{@{}l@{}}-\tfrac{107}{6}\phi^{n-2}+59\phi^{n-1}\\[2pt]\quad{}-\tfrac{131}{2}\phi^{n}+\tfrac{73}{3}\phi^{n+1}\end{array}$
 & $\begin{array}{@{}l@{}}\tfrac{1691}{12}\phi^{n-3}-604\phi^{n-2}+975\phi^{n-1}\\[2pt]\quad{}-\tfrac{2108}{3}\phi^{n}+\tfrac{763}{4}\phi^{n+1}\end{array}$\\
\hline
$B_k(\phi^{n+1})$
 & $-2\phi^{n}+3\phi^{n+1}$
 & $15\phi^{n-1}-35\phi^{n}+21\phi^{n+1}$
 & $-120\phi^{n-2}+396\phi^{n-1}-440\phi^{n}+165\phi^{n+1}$\\
\hline
$C_k(\phi^{n})$
 & $-3\phi^{n-1}+4\phi^{n}$
 & $21\phi^{n-2}-48\phi^{n-1}+28\phi^{n}$
 & $-165\phi^{n-3}+540\phi^{n-2}-594\phi^{n-1}+220\phi^{n}$\\
\hline
\end{tabular}
\end{table}

\subsection{The SVV-stabilized scheme}
\label{sec:scheme}
Since the pressure equation~\eqref{eq:HS-pres} is simply a Poisson problem with a natural Neumann boundary condition, it is solved by a standard spectral-Galerkin Poisson solver~\citep{Shen1994, CanutoHussainiQuarteroniZang2006}.
Below we focus on the velocity solver. For ease of presentation, this section considers homogeneous Dirichlet boundary conditions and employs the Legendre Galerkin spectral method. The formulation for periodic and free-slip boundary conditions using a Fourier–trigonometric basis is provided in Appendix~\ref{app:fourier-trig}.

Let $X_N=\{\,v\in P_N(-1,1):v(\pm1)=0\,\}$ be the one-dimensional space of
polynomials of degree at most $N$ vanishing at the endpoints, of dimension
$M=N-1$. We represent it in the Legendre--Galerkin basis $\phi_j=L_j-L_{j+2}$
($L_j$ the Legendre polynomial of degree $j$), $j=0,\dots,N-2$, for which the
stiffness and mass matrices $S_v$ and $M_v$ of the $H^1_0$ and $L^2$ inner
products are sparse \citep{Shen1994}. Let $\bv_i\in\R^M$ be the
$M_v$-orthonormal solutions of the generalized eigenproblem
$S_v\bv_i=\mu_i M_v\bv_i$, and set $E=[\bv_0,\dots,\bv_{M-1}]$, so that
$E^\top M_v E=I$ and $E^\top S_v E=\Lambda:=\operatorname{diag}(\mu_i)$
\cite[Theorem~7.6.4]{HornJohnson2013}. The columns of $E$ are the
$\phi$-coordinates of the eigenfunctions $\psi_i=\sum_{m=0}^{M-1}E_{mi}\phi_m$,
which form the \emph{simultaneous-diagonalization basis} of $X_N$, with
$(\psi_i,\psi_j)=\delta_{ij}$ and $(\psi_i',\psi_j')=\mu_i\delta_{ij}$. The
matrix $E$ is the change of basis between the assembly basis $\{\phi_j\}$ and
the eigen-basis $\{\psi_i\}$ used in the solver below.

The discrete velocity space is the tensor product
$\bm V_N=\bigl(X_N\otimes X_N\bigr)^2\subset\bm H^1_0(\Om)$ on $\Om=(-1,1)^2$,
each component $X_N\otimes X_N$ spanned by the Legendre--Galerkin assembly basis
$\Phi_{ij}=\phi_i(x)\,\phi_j(y)$, $0\le i,j\le M-1$.
Let $\{\Psi_{ij}\}_{0\le i,j\le M-1}$ be the tensor eigen-basis of $X_N\otimes X_N$
(with $\Psi_{ij}=\psi_i(x)\,\psi_j(y)$), which is orthonormal and simultaneously
diagonalizes the mass and stiffness forms:
\[
(\Psi_{ij},\Psi_{i'j'})=\delta_{ii'}\delta_{jj'},\qquad
(\nabla\Psi_{ij},\nabla\Psi_{i'j'})=(\mu_i+\mu_j)\,\delta_{ii'}\delta_{jj'}.
\]
Equivalently, on $\bm V_N$, $-\Delta\Psi_{ij}=(\mu_i+\mu_j)\Psi_{ij}$, and, separating
directions, $(\partial_x\Psi_{ij},\partial_x\Psi_{i'j'})=\mu_i\delta_{ii'}\delta_{jj'}$
and $(\partial_y\Psi_{ij},\partial_y\Psi_{i'j'})=\mu_j\delta_{ii'}\delta_{jj'}$.

\medskip
The SVV term damps the high, under-resolved modes while leaving the
well-resolved low modes untouched. To build it, we first introduce the
one-dimensional Maday--Kaber--Tadmor SVV kernel $\widehat Q_\ell$ on $\{\psi_{\ell}\}$ of $X_N$ \citep{Tadmor1989,MadayKaberTadmor1993},
\begin{equation}\label{eq:MKTkernel}
\widehat Q_\ell =
\begin{cases}
0, & 0\le \ell\le m_N,\\[2pt]
\exp\bigl(-(\ell-M)^2/(\ell-m_N)^2\bigr), & m_N < \ell \le M-1,
\end{cases}
\end{equation}
with parameters
\begin{equation}\label{eq:SVVparams}
\eps_N = \frac{C_{\rm svv}}{M}, \qquad m_N = \lceil\sqrt M\,\rceil,
\qquad C_{\rm svv} = O(1).
\end{equation}
It vanishes on the low modes $\ell\le m_N\sim\sqrt M$, preserving accuracy on
the smooth part of the solution, and rises monotonically to
$\widehat Q_{M-1}\to 1$ at the cut-off.

In two dimensions we apply the kernel \emph{directionally}, following
\citet{SeveracSerre2007} and \citet{ChenPasquettiXu2021}, so that each coordinate
carries its own one-dimensional kernel.

\begin{definition}[SVV operator]\label{def:svv}
Define the SVV operator $S_N:\bm V_N\to\bm V_N$ by its action on the eigen-basis,
extended linearly and applied componentwise to vector fields: for
$\bu=\sum_{ij}\widehat u_{ij}\Psi_{ij}$,
\begin{equation}\label{eq:SNdef}
S_N\Psi_{ij}:=\eps_N\bigl(\widehat Q_i\mu_i+\widehat Q_j\mu_j\bigr)\Psi_{ij},
\qquad
S_N\bu=\eps_N\sum_{ij}\bigl(\widehat Q_i\mu_i+\widehat Q_j\mu_j\bigr)\widehat u_{ij}\,\Psi_{ij}.
\end{equation}
The associated scalar multiplier $Q_N$, its square root, and the two directional
multipliers, all diagonal in $\{\Psi_{ij}\}$, are
\begin{equation}\label{eq:Qtilde}
Q_N\Psi_{ij}:=\widehat Q_{ij}\,\Psi_{ij},\quad
\sqrt{Q_N}\,\Psi_{ij}:=\sqrt{\widehat Q_{ij}}\,\Psi_{ij},\quad
\widehat Q_{ij}:=\frac{\widehat Q_i\mu_i+\widehat Q_j\mu_j}{\mu_i+\mu_j},
\end{equation}
\begin{equation}\label{eq:dirkernel}
\widehat Q^{\,x}\Psi_{ij}:=\widehat Q_i\,\Psi_{ij},\qquad
\widehat Q^{\,y}\Psi_{ij}:=\widehat Q_j\,\Psi_{ij},\qquad
\bm{\mathcal Q}_N:=\diag\bigl(\widehat Q^{\,x},\widehat Q^{\,y}\bigr).
\end{equation}
\end{definition}

\begin{lemma}[Properties of the SVV operator]\label{lem:svv-props}
The operator $S_N$ of Definition~\ref{def:svv} has the following properties.
\begin{enumerate}
\item[\textup{(P1)}] \emph{Strong form.} $S_N=-\eps_N\,Q_N\Delta$ on $\bm V_N$.
\item[\textup{(P2)}] \emph{Directional (divergence) form.}
  $S_N\bu=-\eps_N\diver\!\bigl(\bm{\mathcal Q}_N\nabla\bu\bigr)$ on $\bm V_N$, in the
  weak sense.
\item[\textup{(P3)}] \emph{Diagonal, self-adjoint, commuting.} $Q_N$ and
  $\sqrt{Q_N}$ are diagonal in $\{\Psi_{ij}\}$, hence self-adjoint, and commute with
  $\Delta$ on $\bm V_N$.
\item[\textup{(P4)}] \emph{Bounded symbol.} $\widehat Q_{ij}\in[0,1]$, hence
  $\norm{Q_N\bv}\le\norm{\bv}$ and $\norm{\sqrt{Q_N}\bv}\le\norm{\bv}$, and these
  bounds extend to all $\bv\in\bm L^2(\Om)$ by the truncated expansion.
\item[\textup{(P5)}] \emph{Vanishing on resolved modes.}
  $\widehat Q_{ij}=0\iff i\le m_N$ and $j\le m_N$.
\item[\textup{(P6)}] \emph{Positive semidefinite.}
  $(S_N\bu,\bu)=\eps_N\norm{\nabla\sqrt{Q_N}\,\bu}^2\ge0$; in particular $S_N$ is
  self-adjoint and positive semidefinite.
\end{enumerate}
\end{lemma}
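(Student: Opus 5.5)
The plan is to verify all six properties directly on the eigen-basis $\{\Psi_{ij}\}$. The key structural facts are the simultaneous diagonalization relations recalled before Definition~\ref{def:svv}:
\[
(\Psi_{ij},\Psi_{i'j'})=\delta_{ii'}\delta_{jj'},\quad
(\partial_x\Psi_{ij},\partial_x\Psi_{i'j'})=\mu_i\delta_{ii'}\delta_{jj'},\quad
(\partial_y\Psi_{ij},\partial_y\Psi_{i'j'})=\mu_j\delta_{ii'}\delta_{jj'}.
\]
Throughout, $\Delta$ on $\bm V_N$ means the discrete Laplacian defined weakly, i.e.\ $-\Delta\Psi_{ij}=(\mu_i+\mu_j)\Psi_{ij}$. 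Everything is componentwise, so it suffices to treat scalar fields in $X_N\otimes X_N$.

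For (P1), compute $-\eps_N Q_N\Delta\Psi_{ij}=\eps_N\widehat Q_{ij}(\mu_i+\mu_j)\Psi_{ij}$ and substitute the definition \eqref{eq:Qtilde} of $\widehat Q_{ij}$. This gives exactly $\eps_N(\widehat Q_i\mu_i+\widehat Q_j\mu_j)\Psi_{ij}$. Note $\mu_i+\mu_j>0$, since the stiffness form is positive definite on $X_N$, so $\widehat Q_{ij}$ is well defined.

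For (P2), interpret the divergence form weakly: the claim is that for all $\bv\in\bm V_N$,
\[
(S_N\bu,\bv)=\eps_N\bigl(\widehat Q^{\,x}\partial_x u,\partial_x v\bigr)+\eps_N\bigl(\widehat Q^{\,y}\partial_y u,\partial_y v\bigr).
\]
Here I read $\bm{\mathcal Q}_N\nabla u$ as $(\partial_x \widehat Q^{\,x}u,\ \partial_y\widehat Q^{\,y}u)$, the multiplier acting on the function before differentiation. I would flag this as the one point needing care, since $\partial_x u\notin X_N\otimes X_N$. Testing with $u=\Psi_{ij}$, $v=\Psi_{i'j'}$, the right side equals $\eps_N(\widehat Q_i\mu_i+\widehat Q_j\mu_j)\delta_{ii'}\delta_{jj'}$ by the directional orthogonality relations. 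This agrees with $(S_N\Psi_{ij},\Psi_{i'j'})$ by orthonormality, and bilinearity finishes the argument.

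(P3) is immediate: all these operators are diagonal in an $L^2$-orthonormal basis with real symbols, and $\Delta$ is diagonal in the same basis, so they commute. For (P4), note $\widehat Q_\ell\in[0,1]$ from \eqref{eq:MKTkernel}, since the exponent is nonpositive. Then $\widehat Q_{ij}$ is a convex combination of $\widehat Q_i$ and $\widehat Q_j$ with weights $\mu_i/(\mu_i+\mu_j)$ and $\mu_j/(\mu_i+\mu_j)$, hence lies in $[0,1]$. Parseval in the orthonormal basis gives $\norm{Q_N\bv}\le\norm{\bv}$ and $\norm{\sqrt{Q_N}\bv}\le\norm{\bv}$. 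For general $\bv\in\bm L^2$, define the operators as acting on the $L^2$-orthogonal projection onto $\bm V_N$; Bessel's inequality then gives the same bound.

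For (P5), use $\mu_i,\mu_j>0$ together with $\widehat Q_\ell\ge0$. The sum $\widehat Q_i\mu_i+\widehat Q_j\mu_j$ vanishes iff $\widehat Q_i=\widehat Q_j=0$. The kernel $\widehat Q_\ell$ is strictly positive for $m_N<\ell\le M-1$, because the exponential is positive. So $\widehat Q_\ell=0$ iff $\ell\le m_N$.

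For (P6), expand $\bu=\sum\widehat u_{ij}\Psi_{ij}$. Then $(S_N\bu,\bu)=\eps_N\sum\widehat Q_{ij}(\mu_i+\mu_j)|\widehat u_{ij}|^2$. Since $\sqrt{Q_N}\bu=\sum\sqrt{\widehat Q_{ij}}\,\widehat u_{ij}\Psi_{ij}$, orthogonality of gradients gives $\norm{\nabla\sqrt{Q_N}\bu}^2=\sum\widehat Q_{ij}(\mu_i+\mu_j)|\widehat u_{ij}|^2$, and the two sides agree. Self-adjointness follows because $S_N$ is diagonal with real eigenvalues in an orthonormal basis.

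None of this is deep. The main obstacle is purely interpretive: making (P2) precise, i.e.\ fixing the weak meaning of $\diver(\bm{\mathcal Q}_N\nabla\bu)$, and making the extension in (P4) precise. I would settle both by stating them as the bilinear identity and the projection-based definition described above.
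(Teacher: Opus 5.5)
Your proposal is correct. For (P1)--(P2) it is the paper's own argument (Appendix~A): both operators are compared mode by mode on $\{\Psi_{ij}\}$ using $\widehat Q_{ij}(\mu_i+\mu_j)=\widehat Q_i\mu_i+\widehat Q_j\mu_j$ and the directional stiffness relations. The only small difference is that the paper reaches the bilinear form by integrating by parts, whereas you take the bilinear identity as the definition of the weak divergence form. Your reading $\widehat Q^{\,x}\partial_x\Psi_{ij}=\partial_x\widehat Q^{\,x}\Psi_{ij}=\widehat Q_i\,\partial_x\Psi_{ij}$ is exactly what the paper means by ``$\widehat Q^{\,x}$ scales the $x$-mode $i$ by $\widehat Q_i$''.

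You genuinely depart from the paper in (P3)--(P6). The paper gives no argument there; it labels them classical and cites the SVV literature. You instead verify them directly in the orthonormal eigenbasis:
\begin{itemize}
\item a convex-combination argument for $\widehat Q_{ij}\in[0,1]$;
\item strict positivity of the $\mu_\ell$ and of the kernel above $m_N$ for (P5);
\item Parseval for (P4) and (P6).
\end{itemize}
This route is self-contained and tailored to the specific directional symbol $\widehat Q_{ij}$ used here, which makes it preferable to a citation.

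Two of your interpretive choices also matter for correctness. First, the identity $-\Delta\Psi_{ij}=(\mu_i+\mu_j)\Psi_{ij}$, and hence the commutation in (P3), holds only for the Galerkin Laplacian on $\bm V_N$. It fails for the pointwise Laplacian, since already in one dimension $-\psi_i''$ has lower degree than $\psi_i$; the paper's ``Equivalently'' passes over this. Second, your projection-based extension in (P4) is the precise form of the paper's ``truncated expansion''.
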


The strong forms \textup{(P1)}--\textup{(P2)} of the SVV operator are proved in Appendix~\ref{app:svv-proof}.  Properties (P3)--(P6) are classical and go back to \citet{Tadmor1989},
\citet{MadayKaberTadmor1993}, and \citet{GuoMaTadmor2001}. The
two-dimensional directional (diagonal) kernel $\bm{\mathcal Q}_N$ is that of
\citet{SeveracSerre2007} and \citet{ChenPasquettiXu2021}.

The SVV-stabilized scheme replaces \eqref{eq:HS-vel} by
\begin{equation}\label{eq:SVV-vel}
\frac{\Ak(\bu^{n+1})}{\dt} - (\nu + \eps_N Q_N)\Delta\Bk(\bu^{n+1})
+ \nabla \Ck(p^n) + \Ck(\bu^n)\!\cdot\!\nabla \Ck(\bu^n) = \bff^{n+\beta_k},
\end{equation}
keeping the pressure step \eqref{eq:HS-pres} unchanged. The SVV contribution
$-\eps_N Q_N\Delta\Bk(\bu^{n+1})=S_N\Bk(\bu^{n+1})$ acts on $\Bk(\bu^{n+1})$, at
$t^{n+\beta_k}$ like the viscosity.
Note \eqref{eq:SVV-vel} is linear in $\bu^{n+1}$
and takes the form $\mathcal L\,\bu^{n+1}=\bm g$,
where
\begin{equation}\label{eq:Lop}
\mathcal L\,\bu^{n+1} := \frac{a_{k,k}}{\dt}\,\bu^{n+1}
- b_{k,k-1}\,(\nu + \eps_N Q_N)\,\Delta\bu^{n+1},
\end{equation}
where $a_{k,k}$ and $b_{k,k-1}$ are the coefficients of $\bu^{n+1}$ in
$\Ak$ and $\Bk$, respectively, and the right-hand side
$\bm g$ collects all other terms. 
In the eigen-basis $\{\Psi_{ij}\}$, 
\begin{equation}\label{eq:Ldiag}
(\widehat{\mathcal L u})_{ij}=\lambda_v(i,j)\,\widehat u_{ij},
\end{equation}
where $\widehat u_{ij}$ are the coefficients of $u$ in
$\{\Psi_{ij}\}$ and $\lambda_v(i,j)$ is given by
\begin{equation}\label{eq:lambda-SVV}
\lambda_v(i,j) = \frac{a_{k,k}}{\dt}
\;+\; b_{k,k-1}\,\Bigl[\nu\,(\mu_i+\mu_j)
      \;+\; \eps_N\bigl(\widehat Q_i\,\mu_i+\widehat Q_j\,\mu_j\bigr)\Bigr]
= \frac{a_{k,k}}{\dt}
\;+\; b_{k,k-1}\,\bigl(\nu + \eps_N\,\widehat Q_{ij}\bigr)\,(\mu_i+\mu_j),
\end{equation}
with $\widehat Q_{ij}$ as in \eqref{eq:Qtilde}. The directional kernel introduces no additional asymptotic computational cost: it changes only one diagonal entry.

Consequently the velocity step $\mathcal L\,\bu^{n+1}=\bm g$, applied to each
velocity component, is computed in four sub-steps that use the $\Phi_{ij}$
basis for assembly and the eigen-basis $\Psi_{ij}$ for inversion:
\begin{enumerate}
\item \emph{Assemble} the right-hand side $\bm g$ in the
  $\Phi_{ij}$ basis and obtain the coefficient
  matrix $G$.
\item \emph{Rotate} into the eigen-basis, $\widehat G = E^\top G\,E$.
\item \emph{Diagonal solve}: in $\{\Psi_{ij}\}$ the operator $\mathcal L$ is
  diagonal with entries $\lambda_v(i,j)$, so
  $\widehat U_{ij} = \widehat G_{ij}/\lambda_v(i,j)$.
\item \emph{Rotate back} to the $\Phi_{ij}$ basis, $U = E\,\widehat U\,E^\top$;
  the entries of $U$ are the coefficients of $\bu^{n+1}$.
\end{enumerate}
The per-step cost is therefore identical to the scheme without SVV, since the
stabilization only adds $\eps_N(\widehat Q_i\mu_i+\widehat Q_j\mu_j)$ to the
diagonal entry $\lambda_v(i,j)$. The directional kernel is thus free.

\subsection{Initialization of the multistep scheme}
\label{sec:init}

The $k$-step scheme \eqref{eq:HS-scheme} needs starting levels
$\bu^0,\dots,\bu^{k-1}$ and $p^0,\dots,p^{k-1}$. For a manufactured solution
these are read off the exact fields, as in the $\nu=1$ study below. Otherwise,
as for the perturbed Kovasznay flow of Section~\ref{sec:kov}, they must be
generated to the design order without destabilizing the run. We use a
Richardson-extrapolated backward-Euler self-start
\citep{HairerWanner1996,Deuflhard1985}.

The building block is one backward-Euler substep, the $k=1$, $\beta_1=0$ member
of \eqref{eq:HS-scheme}, with the viscous and SVV terms implicit and the
convection and pressure gradient explicit. Advancing $(\bu^n,p^n)$ to
$(\bu^{n+1},p^{n+1})$ over a substep $\tau$,
\begin{equation}\label{eq:BEsub}
\frac{\bu^{n+1}-\bu^n}{\tau} - \nu\Delta\bu^{n+1} + S_N\bu^{n+1}
 = \bff(t^{n+1}) - (\bu^n\!\cdot\!\nabla)\bu^n - \nabla p^n,
\qquad \bu^{n+1}|_{\partial\Om}=0,
\end{equation}
after which $p^{n+1}$ follows from \eqref{eq:HS-pres}. By
\eqref{eq:lambda-SVV} this is a single diagonal solve with symbol
$1/\tau + \nu(\mu_i+\mu_j) + \eps_N(\widehat Q_i\mu_i+\widehat Q_j\mu_j)\ge 1/\tau>0$,
so the substep is $L$-stable and carries no viscous restriction, unlike a
Runge--Kutta self-start, which on the Gauss--Lobatto grid would require
$\tau\lesssim 1/(\nu N^4)$ \citep{WeidemanTrefethen1988}. Only the explicit
convective restriction of the main scheme remains, and refining the substep
relaxes it.

Backward Euler is first order, so its result at a fixed time admits an
asymptotic expansion in the substep $\tau$. For each level $j=1,\dots,k-1$ we
integrate $[0,t^j]$ with $\tau_\ell=\dt/2^\ell$ and $m_\ell=j\,2^\ell$ steps for
$\ell=0,\dots,L$, giving $\bu^{(j)}_\ell$, and combine them by the Neville
recursion \citep{HairerNorsettWanner1993}
\begin{equation}\label{eq:neville}
T^{(0)}_\ell=\bu^{(j)}_\ell,\qquad
T^{(r)}_\ell=T^{(r-1)}_\ell
   +\frac{T^{(r-1)}_\ell-T^{(r-1)}_{\ell-1}}{2^{r}-1},
\qquad r=1,\dots,L,\ \ell=r,\dots,L.
\end{equation}
Each column cancels one further term of the expansion, so $\bu^j:=T^{(L)}_L$
carries an error of order $\dt^{L+2}$, and $p^j$ is its consistent pressure from
\eqref{eq:HS-pres}. Taking $L=k-2$ makes every starting level accurate to order
$\dt^{k}$, the minimum for global order $k$. We use $L=k$, so that the starting
error stays below the temporal error at the finest $\dt$ and largest $N$
considered.

\section{Energy and error estimates for the SVV-stabilized scheme}
\label{sec:analysis}

\subsection{Main results}
\label{sec_mainresults}
The error analysis in \cite{HuangShen2025} is established only for the case $\nu=1$. We extend it to arbitrary viscosities $\nu>0$ and to the
SVV-stabilized scheme in Theorem~\ref{thm:main}. 
Define $\be^i=\bu^i-\bu(t^i)$ and $e_p^i=p^i-p(t^i)$.
\begin{theorem}[Stability and error estimates for the SVV-stabilized scheme]\label{thm:main}
Suppose Assumption~\ref{ass:smooth} holds and
$\norm{\nabla\be^i}^2+\dt\norm{\Delta\be^i}^2\le C\dt^{2k}$ for $i=0,\dots,k-1$.
Set $C_0:=\max_{0\le t\le T}\norm{\nabla\bu(t)}+1$, $M_2:=\sup_t\norm{\bu(t)}_2$,
$M_f:=\sup_t\norm{\bff(t)}$, and
$\mathcal R_k:=\int_0^T\bigl(\norm{\partial_t^{k+1}\bu}^2+\norm{\partial_t^k\bu}_2^2
+\norm{\partial_t^k p}_1^2+\norm{\partial_t^k(\bu\!\cdot\!\nabla\bu)}^2\bigr)dt$.
Then, for any $\eps_N\ge0$ and $\dt$ sufficiently small, the SVV-stabilized scheme
\eqref{eq:SVV-vel}, \eqref{eq:HS-pres} satisfies the following.

\emph{Part 1 (stability).} 
\begin{eqnarray}\label{eq:H1-apriori}
\sup_{0\le j\le n+1}\norm{\nabla\bu^j} &\le& C_0,\\
\label{eq:svv-stab}
\sup_{m\le n}\norm{\nabla\bu^{m+1}}^2
&+& \nu\dt\sum_{i=0}^{n}\norm{\Delta\bu^{i+1}}^2
+\eps_N\dt\sum_{i=0}^{n}\norm{\sqrt{Q_N}\Delta\bu^{i+1}}^2
\le C_{a1},
\end{eqnarray}
where 
$C_{a1}=\frac{CC_0^6T}{\nu^3}+\frac{CTM_f^2}{\nu}+CC_0^2(1+\nu T)$.

\emph{Part 2 (error).}
\begin{equation}\label{eq:svv-err}
\begin{aligned}
&\norm{\nabla\be^{n+1}}^2+\nu\dt\sum_{i=0}^{n+1}\norm{\Delta\be^i}^2
+\frac{\dt}{\nu}\sum_{i=0}^{n+1}\norm{\nabla e_p^i}^2
+\frac{\eta_k\eps_N\dt}{2}\sum_{i=0}^{n+1}\norm{\sqrt{Q_N}\Delta\Ck(\be^i)}^2\\
&\qquad\le \bigl(C_R\,\dt^{2k}+\mathcal B\,D_{\rm svv}\bigr)\,\mathcal G,
\end{aligned}
\end{equation}
where the amplification $\mathcal B$, the data factor $C_R$, and the Gronwall factor $\mathcal G$ are
\begin{equation}\label{eq:svv-Ca2}
\begin{aligned}
\mathcal B&:=1+\frac{C_0^6T}{\nu^5}+\frac{TM_f^2}{\nu^3}+\frac{T(C_0^2+M_2^2)}{\nu}+\frac{C_0^2}{\nu^2}+\nu T,
\qquad C_R:=\frac{C\bigl((1+\nu^2)\mathcal R_k+\nu\bigr)}{\nu}\,\mathcal B,\\
\mathcal G&:=\exp\!\Bigl(\frac{CC_0^6T}{\nu^5}+\frac{CTM_f^2}{\nu^3}+\frac{CTM_2^2}{\nu}+\frac{CC_0^2T}{\nu}+\frac{CC_0^2}{\nu^2}\Bigr),
\end{aligned}
\end{equation}
and $D_{\rm svv}\le c\,\eps_N T\sup_{0\le t\le T}\norm{\sqrt{Q_N}\Delta\bu(t)}^2$
with $c$ depending only on $A_k,B_k$, and $C$ depends only on $\Om$ and $k$.
In particular $\eps_N=0$ gives $D_{\rm svv}=0$, removes the two SVV terms, and
recovers the $\nu$-dependent version of \cite[Thm.~4.1]{HuangShen2025}.
\end{theorem}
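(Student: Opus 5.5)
We follow the Huang--Shen framework: G-stability multipliers for the Taylor-shifted BDF pair $(\Ak,\Bk)$, tested against $-\Delta \Ck(\cdot)$, with $\nu$ and the SVV term carried explicitly throughout. The key algebraic tool, taken from \cite{HuangShen2024,HuangShen2025}, is that for the recommended $\beta_k$ there exist a symmetric positive definite matrix $G_k$ and a constant $\eta_k>0$ such that
\[
(\Ak(\phi^{n+1}),\Ck(\phi^{n}))\ge \|\Phi^{n+1}\|_{G_k}^2-\|\Phi^{n}\|_{G_k}^2,
\qquad
(\Bk(\phi^{n+1}),\Ck(\phi^n))\ge \eta_k\|\Ck(\phi^n)\|^2-\ldots,
\]
up to telescoping $G$-norm remainders. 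Here $\Phi^n=(\phi^{n-k+1},\dots,\phi^n)$. These inequalities apply to any inner product in which the operators act componentwise. In particular they hold for $(\nabla\cdot,\nabla\cdot)$ and for the SVV-weighted form $(\sqrt{Q_N}\Delta\cdot,\sqrt{Q_N}\Delta\cdot)$. This works because, by (P3) and (P6), $Q_N$ is diagonal, self-adjoint, nonnegative, and commutes with $\Delta$. Consequently the SVV contribution $\eps_N(Q_N\Delta\Bk(\bu^{n+1}),\Delta\Ck(\bu^n))$ yields the coercive term $\eta_k\eps_N\norm{\sqrt{Q_N}\Delta\Ck}^2$ plus telescoping terms. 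No cross term needs bounding, so the argument holds for every $\eps_N\ge0$ with constants independent of $\eps_N$.

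\textbf{Stability (Part 1).} We test \eqref{eq:SVV-vel} with $-\Delta\Ck(\bu^n)$, or equivalently work with the $H^1$/$\Delta$ multiplier as in \cite{HuangShen2025}, and sum over $n$. The convection term is bounded by $\norm{\Ck\bu}_{L^\infty}\norm{\nabla\Ck\bu}\norm{\Delta\Ck\bu}$. Using Agmon/Gagliardo--Nirenberg, $\norm{\bu}_{L^\infty}\lesssim\norm{\nabla\bu}^{1/2}\norm{\Delta\bu}^{1/2}$, and then Young, we absorb against $\nu\eta_k\norm{\Delta\Ck\bu}^2$. This leaves $C\norm{\nabla\Ck\bu}^6/\nu^3$, which is the source of $C_0^6T/\nu^3$. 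The forcing gives $M_f^2/\nu$.

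The pressure term $\nabla\Ck(p^n)$ is controlled through \eqref{eq:HS-pres}. By the curl--curl identity and the Huang--Shen estimate $\norm{\nabla p^{n}}\lesssim\norm{\bff}+\norm{\bu\cdot\nabla\bu}+\nu\norm{\Delta\bu}^{1/2}\norm{\nabla\bu}^{1/2}\ldots$, this yields $\nu$-weighted terms absorbed by the viscous dissipation. The a priori bound \eqref{eq:H1-apriori} is proved by induction: we assume it up to level $n$, apply the discrete Gronwall argument to get \eqref{eq:svv-stab}, and then close the induction with $\norm{\nabla\bu^{n+1}}\le\norm{\nabla\bu(t^{n+1})}+\norm{\nabla\be^{n+1}}\le C_0$. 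The last step uses the error bound of Part 2 at level $n+1$ together with $\dt$ small. Parts 1 and 2 are therefore proved jointly by induction, exactly as in \cite{HuangShen2025}.

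\textbf{Error (Part 2).} We subtract the consistency relation for the exact solution. The truncation residual is $O(\dt^k)$ in the norms collected in $\mathcal R_k$, and it acquires the extra SVV consistency term $-\eps_N Q_N\Delta\Bk(\bu(t^{n+1}))$, since the exact solution does not satisfy the SVV equation. We test the error equation with $-\Delta\Ck(\be^n)$ and apply the multiplier inequalities to both $\nu$ and $\eps_N Q_N$.

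The SVV residual is handled by Cauchy--Schwarz in the weighted form: $\eps_N(\sqrt{Q_N}\Delta\Bk\bu,\sqrt{Q_N}\Delta\Ck\be)\le \tfrac{\eta_k\eps_N}{2}\norm{\sqrt{Q_N}\Delta\Ck\be}^2+c\eps_N\norm{\sqrt{Q_N}\Delta\bu}^2$. This produces $D_{\rm svv}$ and explains the factor $\eta_k/2$ in \eqref{eq:svv-err}.

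The nonlinear error $\Ck\bu^n\cdot\nabla\Ck\bu^n-\bu\cdot\nabla\bu$ splits into $\Ck\be\cdot\nabla\Ck\bu^n+\bu\cdot\nabla\Ck\be$ plus an extrapolation residual. Using the $H^1$ bound \eqref{eq:H1-apriori} and the dissipation bound \eqref{eq:svv-stab}, Young's inequality generates coefficients $C_0^6/\nu^5$, $M_2^2/\nu$ and so on. Some of these multiply $\norm{\nabla\be}^2$ and enter $\mathcal G$; others multiply the data and build $\mathcal B$.

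The pressure error $\nabla e_p$ comes from the difference of \eqref{eq:HS-pres} with its exact counterpart. Its bound is $\norm{\nabla e_p^{n+1}}\lesssim\norm{\text{conv. error}}+\nu\norm{\Delta\be^{n+1}}^{1/2}\norm{\nabla\be^{n+1}}^{1/2}$ plus a consistency term. Multiplying by $\dt/\nu$ and absorbing gives the $\frac{\dt}{\nu}\sum\norm{\nabla e_p}^2$ term. Finally, the discrete Gronwall lemma, valid for $\dt$ small relative to the Gronwall coefficients, gives \eqref{eq:svv-err}.

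\textbf{Main obstacle.} The hardest step is the pressure coupling. The explicit term $\nabla\Ck(e_p^n)$ in the velocity equation must be controlled using only the curl--curl Poisson step, while tracking exact powers of $\nu$. In \cite{HuangShen2025} the pressure is estimated with the viscous term taken at $\nu=1$. Here one must ensure that the pressure-gradient bound, which carries a factor $\nu\norm{\nabla\times\nabla\times\be}$ controlled only through $\nu\norm{\Delta\be}$, can still be absorbed by $\nu\eta_k\norm{\Delta\Ck\be}^2$ despite the different time levels ($n+1$ versus the $\Ck$ combination). We would first use the Huang--Shen trace-free identity $(\nabla\times\nabla\times\bv,\nabla q)$, bounding it by $\norm{\Delta\bv}^{1/2}\norm{\nabla\bv}^{1/2}\norm{\nabla q}$ plus small terms. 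We would then apply Young with $\nu$-dependent weights. These weights account for the $1/\nu^2$ and $1/\nu^5$ entries in $\mathcal B$ and $\mathcal G$.
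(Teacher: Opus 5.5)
\textbf{Genuine gap: the pressure coupling.} This sits exactly at the step you flag as the main obstacle. Your plan bounds the curl--curl (Stokes-pressure) contribution by $\nu\norm{\Delta\bv}^{1/2}\norm{\nabla\bv}^{1/2}$ ``plus small terms'' and then absorbs it with $\nu$-dependent Young weights. That estimate is false.

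\emph{Why it fails.} The Stokes pressure is homogeneous of second order. Under the dilation $\bv\mapsto\bv(\lambda\,\cdot)$ concentrated near a flat piece of $\partial\Om$, $\norm{\nabla p_s(\bv)}$ and $\norm{\Delta\bv}$ scale identically, while $\norm{\Delta\bv}^{1/2}\norm{\nabla\bv}^{1/2}$ loses a factor $\lambda^{-1/2}$. The same scaling shows that in any bound $\norm{\nabla p_s(\bv)}^2\le\beta\norm{\Delta\bv}^2+C\norm{\nabla\bv}^2$ the constant $\beta$ is bounded below by a fixed positive number. Hence $\nu\norm{\nabla p_s(\Ck(\bu^i))}\,\norm{\Delta\Ck(\bu^{i+1})}$ carries the same power of $\nu$ and the same differential order as the dissipation $\eta_k\nu\norm{\Delta\Ck(\bu^{i+1})}^2$. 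No choice of $\nu$-weights can absorb it; it is a competition of pure constants.

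\emph{What the paper does.} It uses the Liu--Liu--Pego estimate (Lemma~\ref{lem:LLP-paper}), $\norm{\nabla p_s(\bv)}^2\le(\tfrac12+\eps)\norm{\Delta\bv}^2+C\norm{\nabla\bv}^2$. After Young with weight $\delta_\gamma\nu$ and the index shift in the sum, absorption requires $\eta_k\ge\frac{1+2\eps}{8\delta_\gamma}+\delta_\gamma+\delta_1+\delta_\alpha+\delta_f$. This is feasible only because $\min_{\delta>0}\bigl(\frac{1}{8\delta}+\delta\bigr)=\frac{1}{\sqrt2}\approx0.7071$ lies just below the coercivity constant $\eta_k=0.71$ of the splitting $\Bk=\eta_k\Ck+\Dk+\Fk$ (Lemma~\ref{lem:BkCk}). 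This quantitative interplay is the missing idea. It is needed twice: in Step~1 with $p_s(\Ck(\bu^i))$, and in the error equation with $p_s(\Ck(\be^i))$. For the a posteriori pressure error in Part~2, the correct bound $2\nu^2\norm{\Delta\be^i}^2$ is harmless after multiplying by $\dt/\nu$, so only the absorption inside the energy estimates is affected.

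\textbf{Smaller points.}
\begin{enumerate}
\item The multiplier must be $-\Delta\Ck(\bu^{n+1})$, the extrapolation containing the new level, as in Lemma~\ref{lem:Gstab}. With $\Ck(\phi^n)$ built from $\phi^{n-k+1},\dots,\phi^n$, the left side of your G-stability inequality is affine in $\phi^{n+1}$, while the right side contains $g_{k,k}\norm{\phi^{n+1}}^2$ with $g_{k,k}>0$. So the inequality is false as written.
\item Closing the induction needs $\norm{\nabla\be^{n+1}}\le1$, i.e.\ $C_{a2}^{\rm vel}\dt^{2k}+D_{\rm svv}\mathcal G\le1$. Since $D_{\rm svv}\mathcal G$ does not decrease with $\dt$, ``$\dt$ small'' alone does not suffice. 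The paper's proof additionally takes $\eps_N$ small enough to control $D_{\rm svv}\mathcal G$.
\item You should use the $\kappa_k\nu\norm{\Delta\phi^{n+1}}^2$ part of Lemma~\ref{lem:BkCk}. It produces the $\nu\dt\sum\norm{\Delta\bu^{i+1}}^2$ term on the left of \eqref{eq:svv-stab}. It also absorbs the first-power term $\frac{CC_0^3}{\nu}\dt\sum\norm{\Delta\bu^j}$ that the explicit pressure generates through $\norm{\bu\cdot\nabla\bu}^2\le CC_0^3\norm{\Delta\bu}$.
\end{enumerate}
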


\begin{remark}\label{rem:scope}
It is worth saying precisely where these negative powers of $\nu$ come from,
since SVV does not remove them. In the stability estimate the second-order terms
are absorbed on all modes only by the viscous coercivity
$\eta_k\nu\dt\norm{\Delta\Ck(\bu^{i+1})}^2$, whose coefficient is $\nu$. The SVV
coercivity is available too, but only on the high modes, so it cannot carry the
low-mode part. The trilinear convection term \eqref{eq:S1conv} is therefore split
by Young's inequality with a weight tied to $\nu$, which leaves the free term
$C\dt\,\nu^{-3}\norm{\nabla\Ck(\bu^i)}^6$. This is the origin of the $\nu^{-3}$ in
$C_{a1}$. The pressure step \eqref{eq:S1press}--\eqref{eq:S1f} contributes
similarly. The hydrodynamic pressure gradient
$\norm{\Ck(\bff^i-\bu^i\!\cdot\!\nabla\bu^i)}$ stays $O(1)$ as $\nu\to0$, yet it is
absorbed by the same $\nu$-viscous coercivity and so is divided by $\nu$, and
through $\norm{\bu^j\!\cdot\!\nabla\bu^j}^2\le CC_0^3\norm{\Delta\bu^j}$ it feeds a
further $\nu^{-3}$. In the error estimate the Gronwall driver contains
$\norm{\Delta\bu^i}^2$, and $\dt\sum_i\norm{\Delta\bu^i}^2\le C_{a1}/\nu=O(\nu^{-4})$
by \eqref{eq:Ca1}. The exponent of the Gronwall factor is then
$\tfrac{C}{\nu}\cdot\tfrac{C_{a1}}{\nu}=O(\nu^{-5})$, which is the $\nu^{-5}$ in
$\mathcal G$. SVV cannot break this chain, because its kernel vanishes on the low
modes that the convection and pressure absorption must also cover, so it adds no
coercivity there and the $\nu$-tied Young weights stay unavoidable.
\end{remark}

\subsection{Some lemmas}
\label{sec:somelemmas}

We collect here the notation and the lemmas used in the
proof.  

\begin{lemma}[Stokes-pressure estimate, \cite{LiuLiuPego2007}]
\label{lem:LLP-paper}
Fix a constant $\eps\in(0,\tfrac12)$. There exists $C>0$, depending only on
$\Om$ and $\eps$, such that for every $\bu\in\bm H^2(\Om)\cap\bm H^1_0(\Om)$,
\[
\norm{\nabla p_s(\bu)}^2
\le \bigl(\tfrac12+\eps\bigr)\norm{\Delta\bu}^2 + C\,\norm{\nabla\bu}^2,
\]
where $p_s(\bu)\in H^1/\R$ is the Stokes pressure defined by
$(\nabla p_s(\bu), \nabla q) = (\diver\Delta\bu, q)$ for all
$q\in H^1(\Om)$.
\end{lemma}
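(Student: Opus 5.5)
This is the Stokes-pressure estimate of Liu--Liu--Pego. I would follow their argument, which rests on a decomposition of the Laplacian on divergence-free-defect fields and a boundary-layer estimate near $\partial\Om$.

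\emph{Step 1 (rewrite the Stokes pressure).} Use $\Delta\bu=\nabla\diver\bu-\nabla\times\nabla\times\bu$. Then $\nabla p_s(\bu)$ is the gradient part of the Helmholtz--Leray decomposition of $\Delta\bu-\nabla\diver\bu$. Testing against $\nabla q$ and using $\diver(\nabla\times\cdot)=0$, the interior contribution vanishes. Hence $\nabla p_s$ is harmonic, with Neumann data $\bm n\cdot(\Delta\bu-\nabla\diver\bu)$ on $\partial\Om$. Equivalently,
\[
\Delta\bu=\nabla p_s(\bu)+P(\Delta\bu-\nabla\diver\bu)+\nabla\diver\bu,
\]
where $P$ is the Leray projector.

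\emph{Step 2 (key identity).} Set $\bw:=P(\Delta\bu-\nabla\diver\bu)$. Aim for
\[
\norm{\Delta\bu}^2=\norm{\bw}^2+\norm{\nabla p_s-\nabla\diver\bu}^2
\]
up to boundary terms, and the central claim
\[
\norm{\nabla p_s}^2\le\Bigl(\tfrac12+\eps\Bigr)\norm{\Delta\bu-\nabla\diver\bu}^2+C\norm{\nabla\bu}^2.
\]
Since $\nabla p_s$ is harmonic, its $L^2$ norm concentrates in a boundary layer $\Gamma_s=\{x:\operatorname{dist}(x,\partial\Om)<s\}$. Using a cutoff $\chi$ supported in $\Gamma_s$, bound $\norm{\nabla p_s}_{\Om\setminus\Gamma_s}\le C_s\norm{\nabla\bu}$ by interior elliptic estimates. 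Inside $\Gamma_s$, use $C^3$ boundary coordinates, where $\partial\Om$ is nearly flat. In the flat half-space case, with $\bu|_{\partial}=0$, one has the exact identity that the harmonic pressure carries at most half of $\norm{\Delta\bu}^2$. This comes from a Fourier computation in the tangential variables: for $\hat u(\xi,z)$ vanishing at $z=0$, the harmonic extension $e^{-|\xi|z}$ has norm at most $\tfrac12$ of $\norm{(\partial_z^2-|\xi|^2)\hat u}^2$. Curvature corrections are $O(s)$ times $\norm{\Delta\bu}^2$ plus $C_s\norm{\nabla\bu}^2$. Choosing $s$ small gives the $\tfrac12+\eps$.

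\emph{Step 3 (conclude).} Replace $\norm{\Delta\bu-\nabla\diver\bu}$ by $\norm{\Delta\bu}$. Here I would use the orthogonality $(\nabla\diver\bu,\nabla p_s)$, integrated by parts using $\bu|_{\partial\Om}=0$ and harmonicity of $p_s$. The main obstacle is the sharp constant in Step 2: the flat-boundary Fourier inequality and controlling the commutator and curvature terms so that only $\eps\norm{\Delta\bu}^2+C\norm{\nabla\bu}^2$ remains. I would try first to reduce to a one-dimensional inequality in $z$ for each tangential frequency, then patch with a partition of unity.
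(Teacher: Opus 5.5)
\paragraph{Verdict: there is a genuine gap.} Step 2 rests on a false inequality, and Step 3 rests on a false identity. The half-space Fourier mechanism you describe is correct, but it bounds $\nabla p_s$ against $\Delta\bu$, not against $\Delta\bu-\nabla\diver\bu$.

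\paragraph{Steps 2 and 3 fail.} A counterexample already exists in the half-space model $\{z>0\}$ that you rely on. Take $\bu=z\nabla h$, with $h$ harmonic and decaying in $\{z>0\}$ and with tangential Fourier support in $\{|\xi|\sim K\}$, $K\gg1$. Then $\bu|_{z=0}=0$, $\diver\bu=\partial_z h$ and $\Delta\bu=2\nabla\partial_z h$. So $\Delta\bu-\nabla\diver\bu=\nabla\partial_z h$ is itself a gradient, and $\nabla p_s(\bu)=\nabla\partial_z h$. Hence $\norm{\nabla p_s}=\norm{\Delta\bu-\nabla\diver\bu}$, while $\norm{\nabla\bu}^2=O(K^{-2})\norm{\nabla p_s}^2$. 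Since $\nabla p_s$ is the orthogonal projection of $\Delta\bu-\nabla\diver\bu$ onto gradients, the sharp constant in your ``central claim'' is $1$, not $\tfrac12+\eps$. Replacing $z$ by the distance to $\partial\Om$ gives the same conclusion on a smooth domain, up to lower-order terms.

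The same example breaks Step 3: $(\nabla\diver\bu,\nabla p_s)=\norm{\nabla p_s}^2\neq0$. In general this pairing equals $\int_{\partial\Om}\diver\bu\,\partial_n p_s$, and $\diver\bu=\bm n\cdot\partial_n\bu$ on $\partial\Om$ is not forced to vanish by $\bu|_{\partial\Om}=0$. The lemma itself holds in this example, since $\norm{\nabla p_s}^2=\tfrac14\norm{\Delta\bu}^2$. So the factor $\tfrac12$ must be measured against $\Delta\bu$ directly, and the detour through $\Delta\bu-\nabla\diver\bu$ cannot be repaired.

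\paragraph{The interior bound is also false.} You claim $\norm{\nabla p_s}_{L^2(\Om\setminus\Gamma_s)}\le C_s\norm{\nabla\bu}$; moreover, $\nabla p_s$ need not concentrate near $\partial\Om$. The pressure is driven by the trace of $\partial_n\bu$: writing $\bu=(\bu',u_N)$, in the flat case $\partial_z p_s=-\partial_z\nabla'\cdot\bu'$ at $z=0$. Take $\bu'=\delta\,\theta(z/\delta)V(x')$ and $u_N=0$, with $\theta(0)=0$, $\theta'(0)=1$ and $\nabla'\cdot V\neq0$. This keeps $p_s$ fixed while $\norm{\nabla\bu}\to0$ as $\delta\to0$. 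What is true, and sufficient, is weaker: only the interior part of the pairing $\norm{\nabla p_s}^2=(\nabla p_s,\Delta\bu-\nabla\diver\bu)$ needs to be lower order. With a cutoff $\chi$ equal to $1$ near $\partial\Om$, move the outer curl of $\nabla\times\nabla\times\bu$ onto $(1-\chi)\nabla p_s$. Only $\nabla\chi\times\nabla p_s$ remains, which is bounded by $C\norm{\nabla p_s}\norm{\nabla\bu}$.

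\paragraph{How to repair the argument.} The paper gives no proof and only cites Liu--Liu--Pego, whose estimate is against $\Delta\bu$ directly. In the flat case the mechanism is short. Since $\Delta p_s=0$ and $\bu|_{z=0}=0$, both $(\nabla p_s,\nabla\diver\bu)$ and $(\partial_z p_s,\Delta u_N)$ equal $-\int_{z=0}\partial_z p_s\,\partial_z u_N$. Together with the equipartition $\norm{\nabla' p}=\norm{\partial_z p}$ for decaying harmonic $p$, this gives
\[
\norm{\nabla p_s}^2=(\nabla' p_s,\Delta\bu')\le\norm{\nabla' p_s}\,\norm{\Delta\bu'}=\tfrac{1}{\sqrt2}\,\norm{\nabla p_s}\,\norm{\Delta\bu'},
\]
that is, $\norm{\nabla p_s}^2\le\tfrac12\norm{\Delta\bu'}^2$.

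Your Fourier computation yields exactly this once it is written against $\Delta\bu$. Let $\hat\xi=\xi/|\xi|$ and $v=\hat\xi\cdot\hat\bu'$. Then $\norm{\nabla p_s}^2=\int|\xi|\,|\partial_z v(\xi,0)|^2\,d\xi$. Because $v(\xi,0)=0$, you have $\partial_z v(\xi,0)=-\int_0^\infty(\partial_z^2-|\xi|^2)v\,e^{-|\xi|z}\,dz$. The factor $\tfrac12$ then comes from $\int_0^\infty e^{-2|\xi|z}\,dz=1/(2|\xi|)$, and Step 3 disappears entirely.

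For a curved $C^3$ boundary, the integration-by-parts form is the one to localize. Use the distance function and a cutoff on $\Gamma_s$, and absorb the normal-component and curvature terms into $\eps\norm{\Delta\bu}^2+C\norm{\nabla\bu}^2$. Flattening the boundary instead destroys the harmonicity of $p_s$, and with it the explicit $e^{-|\xi|z}$ extension your Fourier step needs.

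\paragraph{A point on the definition.} You silently replaced the statement's definition $(\nabla p_s,\nabla q)=(\diver\Delta\bu,q)$ by the Liu--Liu--Pego one: the gradient part of $\Delta\bu-\nabla\diver\bu$, consistent with \eqref{eq:LLPid}. That is the right choice. Under the literal definition, $\bu=\nabla\varphi$ with $\varphi\in C_c^\infty(\Om)$ gives $\norm{\nabla p_s}=\norm{\Delta\bu}$, and the inequality fails. But the replacement must be stated as a choice, not presented as derived from the given definition.
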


\begin{lemma}[\cite{Temam1984}]
\label{lem:tri-paper}
For $d=2,3$ and $\bu,\bv,\bw\in\bm H^1_0(\Om)\cap\bm H^2(\Om)$,
\[
|(\bu\cdot\nabla\bv,\bw)|
\;\le\; c\,\norm{\bu}_1\,\norm{\bv}_1^{1/2}\norm{\bv}_2^{1/2}\,\norm{\bw},
\qquad
\norm{\bu\cdot\nabla\bu}^2\le c\,\norm{\bu}_1^3\norm{\bu}_2,
\]
where $c$ depends only on $\Om$.
\end{lemma}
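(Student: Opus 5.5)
The last statement before the cut is Lemma~\ref{lem:tri-paper}, the trilinear and convection estimate quoted from \cite{Temam1984}. I would prove it by H\"older's inequality, the Sobolev embeddings, and the Gagliardo--Nirenberg (Agmon/Ladyzhenskaya) interpolation for $\nabla\bv$. Bounding $\norm{\bv}_{L^\infty}$ directly is not needed for the first estimate.

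\textbf{First estimate.} Split the integrand by H\"older with exponents $(6,3,2)$:
\[
|(\bu\cdot\nabla\bv,\bw)|\le \norm{\bu}_{L^6}\,\norm{\nabla\bv}_{L^3}\,\norm{\bw}.
\]
\begin{enumerate}
\item Since $d\le3$, the embedding $H^1\hookrightarrow L^6$ gives $\norm{\bu}_{L^6}\le c\norm{\bu}_1$.
\item It remains to show $\norm{\nabla\bv}_{L^3}\le c\norm{\nabla\bv}^{1/2}\norm{\nabla\bv}_{H^1}^{1/2}$. For $d=3$ this is the Gagliardo--Nirenberg inequality $\norm{g}_{L^3}\le c\norm{g}^{1/2}\norm{g}_{L^6}^{1/2}$ (interpolation of $L^3$ between $L^2$ and $L^6$), combined with $H^1\hookrightarrow L^6$. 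For $d=2$, Ladyzhenskaya-type interpolation gives $\norm{g}_{L^3}\le c\norm{g}^{2/3}\norm{g}_1^{1/3}$, which is stronger; since $\norm{g}\le\norm{g}_1$, it still yields the $1/2$--$1/2$ bound.
\item These interpolations hold for $g\in H^1(\Om)$ without any boundary condition. They are justified on the Lipschitz ($C^3$) domain by applying a Stein extension operator and then the whole-space inequalities.
\item Apply them to $g=\nabla\bv$, using $\norm{\nabla\bv}\le\norm{\bv}_1$ and $\norm{\nabla\bv}_{H^1}\le\norm{\bv}_2$.
\end{enumerate}

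\textbf{Second estimate.} Set $\bv=\bu$ and repeat the H\"older step with $\bw$ replaced by the function $\bu\cdot\nabla\bu$ itself:
\[
\norm{\bu\cdot\nabla\bu}\le\norm{\bu}_{L^6}\norm{\nabla\bu}_{L^3}\le c\norm{\bu}_1\norm{\bu}_1^{1/2}\norm{\bu}_2^{1/2}.
\]
Squaring gives $\norm{\bu\cdot\nabla\bu}^2\le c\norm{\bu}_1^3\norm{\bu}_2$. This is exactly the first estimate with $\bw=\bu\cdot\nabla\bu$ followed by division by $\norm{\bu\cdot\nabla\bu}$, treating the trivial case $\bu\cdot\nabla\bu=0$ separately.

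\textbf{Main obstacle.} The only nontrivial point is the interpolation inequality for $\nabla\bv$ on a bounded domain, where $\nabla\bv$ does not vanish on $\partial\Om$. I would handle it by the extension argument above, so that the constant depends only on $\Om$. The hypothesis $\bu,\bv,\bw\in\bm H^1_0$ is not actually needed beyond ensuring that $\norm{\cdot}_1$ is the relevant norm.
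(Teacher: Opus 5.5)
Your proof is correct and is the standard argument behind the citation. The paper gives no proof of its own and simply quotes \cite{Temam1984}, where the bound rests on exactly your H\"older $(6,3,2)$ splitting, the embedding $H^1\hookrightarrow L^6$, and the interpolation $\norm{\nabla\bv}_{L^3}\le\norm{\nabla\bv}^{1/2}\norm{\nabla\bv}_{L^6}^{1/2}$.

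One small streamlining is available. That interpolation is plain H\"older on $\Om$ itself, so no extension operator is needed for it, and $H^1\hookrightarrow L^6$ also holds for $d=2$, so a single chain covers both dimensions. Your direct H\"older bound for $\bu\cdot\nabla\bu$ is also the right way to get the second estimate, since $\bw=\bu\cdot\nabla\bu$ is generally not in $\bm H^1_0\cap\bm H^2$ and so the first estimate, as stated, would not formally apply to it.
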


The G-stability of the shifted BDF time derivative is obtained by applying
the Dahlquist G-stability theorem \citep{Dahlquist1978} to the multiplier
conditions of \cite[Lem.~3.1]{HuangShen2025}, and is recorded as
\cite[eq.~(3.37)]{HuangShen2025}.

\begin{lemma}[{G-stability of shifted BDF time derivative, \cite{HuangShen2025}}]
\label{lem:Gstab}
There exists a symmetric and positive definite matrix $G_k=(g_{i,j})\in\R^{k\times k}$, depending
only on $k$ and $\beta_k$, with smallest eigenvalue $\lambda_k^g>0$, such
that for any $\R^d$-valued sequence $\{\phi^j\}$,
\[
\bigl(\Ak(\phi^{n+1}),-\Delta\Ck(\phi^{n+1})\bigr)
\ge \norm{\nabla\phi^{n+1}}_{G_k}^2 - \norm{\nabla\phi^{n}}_{G_k}^2,
\]
where
$\norm{\nabla\phi^{n+1}}_{G_k}^2:=\sum_{l,j=1}^{k}g_{l,j}\,\bigl(\nabla\phi^{n+1+l-k},\nabla\phi^{n+1+j-k}\bigr)$
denotes the $G_k$-weighted quadratic form of the gradient history; it satisfies
$\norm{\nabla\phi^{n+1}}_{G_k}^2\ge\lambda_k^g\,\norm{\nabla\phi^{n+1}}^2$.
\end{lemma}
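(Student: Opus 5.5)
The statement just above is Lemma~\ref{lem:Gstab}, the G-stability inequality for the shifted BDF operator paired with the explicit extrapolation. I would not reprove it from scratch. I would reduce it to Dahlquist's theorem via the multiplier identity of \cite[Lem.~3.1]{HuangShen2025}, since the paper itself attributes it there.

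\textbf{Step 1: reduce to a scalar multistep pair.} Integrate by parts using $\phi^j\in\bm H^1_0$:
\[
\bigl(\Ak(\phi^{n+1}),-\Delta\Ck(\phi^{n+1})\bigr)=\bigl(\nabla\Ak(\phi^{n+1}),\nabla\Ck(\phi^{n+1})\bigr).
\]
Since $\Ak$ and $\Ck$ act linearly on the time levels with scalar coefficients, they commute with $\nabla$. So it suffices to prove, for sequences $\{w^j\}$ in an arbitrary real inner-product space $H$ (here $H=\bm L^2(\Om)^{d}$ and $w^j=\nabla\phi^j$), that
\[
\Bigl(\sum_{i=0}^k a_i w^{n+1-k+i},\ \sum_{i=0}^{k} c_i w^{n+1-k+i}\Bigr)\ge \|W^{n+1}\|_{G}^2-\|W^{n}\|_G^2,
\]
where $W^{n+1}=(w^{n+2-k},\dots,w^{n+1})$. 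Here $\Ck(\phi^{n+1})$ is read as the shifted extrapolation with coefficients $c_i$ on levels $n+1-k,\dots,n$, so $c_k=0$.

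\textbf{Step 2: Dahlquist's equivalence.} Form the generating polynomials $\alpha(\zeta)=\sum a_i\zeta^i$ and $\gamma(\zeta)=\sum c_i\zeta^i$. By Dahlquist's G-stability theorem \citep{Dahlquist1978}, the existence of a symmetric positive definite $G$ satisfying
\[
\Bigl(\sum a_iw^{i},\sum c_iw^{i}\Bigr)=\|W^1\|_G^2-\|W^0\|_G^2+\Bigl\|\sum_i m_iw^i\Bigr\|^2
\]
for some reals $m_i$ is equivalent to two conditions. The first is A-stability of the pair: $\operatorname{Re}\bigl(\alpha(\zeta)\overline{\gamma(\zeta)}\bigr)>0$ for $|\zeta|>1$. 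The second is that $\alpha,\gamma$ have no common divisor. The identity holds verbatim in any real inner-product space because both sides are quadratic forms in the $w^i$ with real coefficients. Dropping $\|\sum m_iw^i\|^2\ge0$ then gives the inequality. Positive definiteness of $G_k$ gives $\|W\|_{G_k}^2\ge\lambda_k^g\sum_j\|w^j\|^2\ge\lambda_k^g\|w^{n+1}\|^2$, which is the last claim.

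\textbf{Step 3: verify the hypotheses.} This is the main obstacle, since the coefficients are the shifted ones of Table~\ref{tab:HS-coefs}, and $C_k$ enters where Dahlquist's classical setting has the identity multiplier. I would invoke \cite[Lem.~3.1]{HuangShen2025}, which establishes precisely these multiplier (A-stability) conditions for $\beta_k=3,6,9$ and $k=2,3,4$. Failing that, I would check them directly: that $\operatorname{Re}\bigl(\alpha(e^{i\theta})\overline{\gamma(e^{i\theta})}\bigr)\ge0$ on the unit circle, a trigonometric polynomial inequality checkable by hand for $k=2$ and by exact symbolic computation for $k=3,4$; the positivity for $|\zeta|>1$ then follows from the maximum principle applied to $\alpha/\gamma$, whose zeros of $\gamma$ must lie in the closed unit disk. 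The coprimality of $\alpha$ and $\gamma$ I would confirm from their explicit roots.

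\textbf{Step 4: read off $G_k$.} The matrix $G_k$ is then the one recorded in \cite[eq.~(3.37)]{HuangShen2025}. It depends only on $(a_i,c_i)$, hence only on $k$ and $\beta_k$, and $\lambda_k^g$ is its smallest eigenvalue.
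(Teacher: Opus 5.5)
Your route is the paper's own: Dahlquist's G-stability theorem applied to the multiplier conditions of \cite[Lem.~3.1]{HuangShen2025}. However, Step~1 misidentifies the multiplier, and under your reading the inequality is false.

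The table defines $\Ck(\phi^n)$ on $\phi^{n+1-k},\dots,\phi^n$. So $\Ck(\phi^{n+1})=\sum_{q=0}^{k-1}c_{k,q}\phi^{n+2-k+q}$ lives on $\phi^{n+2-k},\dots,\phi^{n+1}$. In your indexing this means $c_0=0$ and $c_k=c_{k,k-1}\in\{4,28,220\}$, not $c_k=0$. This is also how the lemma is used. The stability proof tests with $-\dt\Delta\Ck(\bu^{i+1})$, which is the one-level shift of the explicit $\Ck(\bu^i)$. Likewise, Lemma~\ref{lem:BkCk} pairs $\Bk(\phi^{n+1})$ with the same multiplier to produce $\kappa_k\norm{\Delta\phi^{n+1}}^2$.

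With your choice $c_k=0$, the left-hand side is affine in $w^{n+1}$. The right-hand side $\norm{W^{n+1}}_G^2-\norm{W^n}_G^2$ contains $g_{kk}\norm{w^{n+1}}^2$ with $g_{kk}>0$. Putting $w^{n+1}=sv$ with $v\neq0$ and letting $s\to\infty$ violates the inequality for every symmetric positive definite $G$. Equivalently, Dahlquist's hypothesis fails. Since $\deg\gamma=k-1<\deg\alpha$, we have $\alpha(\zeta)/\gamma(\zeta)\sim(a_k/c_{k-1})\zeta$ as $|\zeta|\to\infty$, and its real part changes sign on large circles. The maximum-principle step of your Step~3 also breaks down, because $\alpha/\gamma$ has a pole at infinity. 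For $k=2$ the failure is visible on the unit circle itself: with $\gamma(\zeta)=4\zeta-3$ one gets $\operatorname{Re}\bigl(\alpha(e^{i\theta})\overline{\gamma(e^{i\theta})}\bigr)=-21(1-\cos\theta)^2\le0$.

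The repair is to use $\gamma(\zeta)=\zeta\sum_{q=0}^{k-1}c_{k,q}\zeta^q$, which has degree $k$ and $\gamma(0)=0$. Coprimality then also needs $\alpha(0)=a_0\ne0$, which holds: $a_0=\tfrac52$, $-\tfrac{107}{6}$, $\tfrac{1691}{12}$. At infinity, $\alpha/\gamma\to a_k/c_{k,k-1}>0$, so the limit is finite and positive. For $k=2$ one checks $\operatorname{Re}\bigl(\alpha(e^{i\theta})\overline{\gamma(e^{i\theta})}\bigr)=2(1-\cos\theta)(11-10\cos\theta)\ge0$, and the zeros $0,\tfrac34$ of $\gamma$ lie inside the disk.

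With this corrected pairing, the rest of your argument is sound and coincides with the paper's appeal to \cite[Lem.~3.1, eq.~(3.37)]{HuangShen2025}. That includes integration by parts for $\phi^j\in\bm H^1_0$, lifting the scalar quadratic identity to an inner-product space, discarding $\norm{\sum_i m_iw^i}^2$, and the eigenvalue bound giving $\lambda_k^g$.
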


The coercivity of the viscous term is obtained from the splitting $\Bk=\eta_k\Ck+\Dk+\Fk$, where the inner product involving $\Dk$ is treated by feeding the multiplier conditions of \cite[Lem.~3.2]{HuangShen2025} into the Dahlquist G-stability theorem \citep{Dahlquist1978}, while the one involving $\Fk$ is handled in Appendix~A of \cite{HuangShen2025}.
\begin{lemma}[{$\Bk$--$\Ck$ coercivity, \cite{HuangShen2025}}]
\label{lem:BkCk}
With $\eta_k=0.71$, there exist a symmetric and positive definite matrix $H_k=(h_{i,j})\in\R^{(k-1)\times(k-1)}$, a positive semidefinite quadratic form $U_k$, and 
$\kappa_k>0$ such that
\begin{align*}
\bigl(\Delta\Bk(\phi^{n+1}),\Delta\Ck(\phi^{n+1})\bigr)
&\ge \;\eta_k\norm{\Delta\Ck(\phi^{n+1})}^2 + \kappa_k\norm{\Delta\phi^{n+1}}^2 
+ \bigl(\mathcal H_{n+1}(\phi)-\mathcal H_n(\phi)\bigr)\\ 
&+ \bigl(\mathcal U_{n+1}(\phi)-\mathcal U_n(\phi)\bigr),
\end{align*}
where $\mathcal H_n(\phi):=\sum_{i,j=1}^{k-1} h_{i,j}\bigl(\Delta\phi^{n+1+i-k},\Delta\phi^{n+1+j-k}\bigr)$ and $\mathcal U_n(\phi):=U_k(\Delta\phi^n,\dots,\Delta\phi^{n+2-k})$.
\end{lemma}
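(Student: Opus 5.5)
The plan is to reduce the inequality to a frequency-domain positivity condition on the generating polynomials of $\Bk$ and $\Ck$, and then to invoke Dahlquist's G-stability theorem \citep{Dahlquist1978}, in the multiplier form of \cite[Lem.~3.2]{HuangShen2025}, to produce the telescoping quadratic forms. Since all operators act linearly and only through time shifts, I work with the scalar-in-time sequence $\psi^j:=\Delta\phi^j$. The Hilbert-space structure ($L^2$ inner products of $\R^d$-valued fields) then enters only through bilinearity, because Dahlquist's theorem holds verbatim in any inner-product space.

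Write $b(\zeta)$ and $c(\zeta)$ for the degree-$k$ polynomials whose coefficients are those of $\Bk(\phi^{n+1})$ and $\Ck(\phi^{n+1})$ in Table~\ref{tab:HS-coefs}, indexed by the powers $\zeta^{0},\dots,\zeta^{k}$ for $\phi^{n+1-k},\dots,\phi^{n+1}$. Note that $b$ has zero lowest coefficient and $c$ has zero top coefficient. I split $\Bk=\eta_k\Ck+\Dk+\Fk$ with $\eta_k=0.71$, as follows.
\begin{itemize}
\item $\Dk$ collects the part of $b-\eta_k c$ for which a multiplier condition holds. I require $\operatorname{Re}\bigl(d(\zeta)\overline{c(\zeta)}\bigr)\ge 0$ on $|\zeta|=1$, with $d,c$ coprime and $c$ having no roots outside the unit disc. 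I also require a strict margin: $\operatorname{Re}\bigl(d\bar c\bigr)-\kappa_k|\zeta|^{2k}\ge 0$, i.e.\ positivity with $\kappa_k|\psi^{n+1}|^2$ subtracted.
\item $\Fk$ is the remaining low-order correction.
\end{itemize}

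For the $\Dk$ part, the G-stability theorem gives a symmetric positive definite $H_k\in\R^{(k-1)\times(k-1)}$ with
\[
(\Dk\psi^{n+1},\Ck\psi^{n+1})-\kappa_k\norm{\psi^{n+1}}^2\ge \mathcal H_{n+1}(\phi)-\mathcal H_n(\phi).
\]
The history has length $k-1$ because, after removing the trivial root factor, both polynomials effectively have degree $k-1$ in the relevant variables. For the $\Fk$ part, the bilinear form $(\Fk\psi^{n+1},\Ck\psi^{n+1})$ is a quadratic form in the window $(\psi^{n+1-k},\dots,\psi^{n+1})$. I will show it is at least a telescoping difference $\mathcal U_{n+1}-\mathcal U_n$ of a positive semidefinite form $U_k$ in $(\psi^n,\dots,\psi^{n+2-k})$. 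This amounts to finding a PSD $U_k$ such that the window matrix of $(\Fk\cdot,\Ck\cdot)$, minus the shift-difference of $U_k$, is PSD. That is a finite linear matrix inequality, which I would solve explicitly or by a rational certificate, as in Appendix~A of \cite{HuangShen2025}. Adding $\eta_k(\Ck\psi,\Ck\psi)=\eta_k\norm{\Delta\Ck(\phi^{n+1})}^2$ then yields the claimed inequality.

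The main obstacle is the positivity verification for the specific coefficients with $\eta_k=0.71$ and $k=3,4$. This means checking $\operatorname{Re}\bigl[(b-\eta_k c-f)(e^{i\theta})\overline{c(e^{i\theta})}\bigr]\ge\kappa_k>0$ for all $\theta$, together with the LMI for $\Fk$. Neither is analytically transparent. I would first try to choose $\Fk$ as the minimal correction making the trigonometric polynomial nonnegative. I would then certify nonnegativity by writing it as a sum of squares in $\cos\theta$, or by exact evaluation of its minimum via roots of its derivative in rational arithmetic. Only if that fails would I fall back on enlarging $U_k$ to absorb more terms.
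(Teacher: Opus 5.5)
Your plan follows the route the paper points to for this cited lemma: split $\Bk=\eta_k\Ck+\Dk+\Fk$, obtain $\mathcal H_n$ from Dahlquist's G-stability theorem via a multiplier condition for $(\Dk,\Ck)$, and treat the $\Fk$ pairing by an explicit certificate as in Appendix~A of \cite{HuangShen2025}. One point needs correcting, and one departs from the paper.

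\textbf{The indexing of $c$.} $\Ck(\phi^{n+1})$ is the table row $\Ck(\phi^n)$ shifted up one level. It therefore uses the same $k$ levels $\phi^{n+2-k},\dots,\phi^{n+1}$ as $\Bk(\phi^{n+1})$. In your window, $c$ (like $b$) has \emph{zero lowest} and nonzero top coefficient. This common factor $\zeta$ is what cancels in $\operatorname{Re}(b\bar c)$ on $|\zeta|=1$ and justifies a history of length $k-1$. Every condition must be checked for $\tilde b=b/\zeta$ and $\tilde c=c/\zeta$, and the $\Fk$ form lives on $(\psi^{n+2-k},\dots,\psi^{n+1})$, not on $k+1$ levels.

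As you wrote it, with $c$ having zero top coefficient, you would be testing the pairing with $\Ck(\phi^n)$. Since $\operatorname{Re}(b\bar c)$ is not invariant under $c\mapsto\zeta c$, this is a different condition, and it already fails for $k=2$: at $\zeta=-1$, $b(-1)\overline{c(-1)}=5\cdot(-7)<0$. The correct pair $\tilde b=3\zeta-2$, $\tilde c=4\zeta-3$ gives $\operatorname{Re}(\tilde b\,\overline{\tilde c})-0.71|\tilde c|^2=0.25+0.04\cos\theta\ge 0.21$.

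\textbf{Where $\kappa_k$ comes from.} The paper, following \cite{HuangShen2025}, draws the term $\kappa_k\norm{\Delta\phi^{n+1}}^2$ from the $\Fk$ part (see the proof of Theorem~\ref{thm:main}). So $\Dk$ needs only the plain multiplier condition, and Dahlquist's theorem applies verbatim, giving a positive definite $H_k$. You instead build a strict margin into the $\Dk$ symbol. That is a legitimate variant, but $(\Dk\psi,\Ck\psi)-\kappa_k\norm{\psi^{n+1}}^2$ is not of Dahlquist's product form, so it needs two extra ingredients.

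\begin{enumerate}
\item The general Fej\'er--Riesz statement: a sliding-window form with nonnegative symbol equals a square plus a telescoping difference of some symmetric form.
\item A separate argument that this form is positive definite. One way: extend a given window by solving $\Ck(\psi^{j})=0$ forward. The extension decays because the roots of $\tilde c$ lie strictly inside the unit disc. Along it the $\Dk$ pairing vanishes, so summing the identity shows that the form of $H_k$ at the window is at least $\kappa_k$ times $\sum\norm{\psi^j}^2$ over the extension. This is positive unless the window vanishes, since $\tilde c(0)\ne 0$.
\end{enumerate}

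With these repairs your variant works even with $\Fk=0$. The minimum over $|\zeta|=1$ of $\operatorname{Re}(\tilde b\,\overline{\tilde c})-0.71|\tilde c|^2$ is $0.21$ for $k=2$ and about $0.28$ for $k=3$. For $k=4$ it is only about $8\times10^{-4}$, near $\cos\theta\approx 0.96$, so your $\kappa_4$ is tiny and its positivity must be certified exactly.
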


\begin{lemma}[SVV coercivity]\label{lem:SVVcoerc}
For any sequence $\{\phi^j\}\subset V_N$,
\begin{align}
\bigl(\sqrt{Q_N}\Delta\Bk(\phi^{n+1}),\sqrt{Q_N}\Delta\Ck(\phi^{n+1})\bigr)
&\ge\; \eta_k\norm{\sqrt{Q_N}\Delta\Ck(\phi^{n+1})}^2
+ \kappa_k\norm{\sqrt{Q_N}\Delta\phi^{n+1}}^2 \notag\\
&+ \bigl(\mathcal H_{n+1}^Q(\phi)-\mathcal H_n^Q(\phi)\bigr)
+ \bigl(\mathcal U_{n+1}^Q(\phi)-\mathcal U_n^Q(\phi)\bigr),
\label{eq:SVVlem}
\end{align}
where, with the matrix $H_k$ and form $U_k$ from Lemma\,\ref{lem:BkCk},
\begin{align*}
\mathcal H_n^Q(\phi)&=\sum_{i,j=1}^{k-1}h_{i,j}\bigl(\sqrt{Q_N}\Delta\phi^{n+1+i-k},\,\sqrt{Q_N}\Delta\phi^{n+1+j-k}\bigr),\\
\mathcal U_n^Q(\phi)&=U_k\bigl(\sqrt{Q_N}\Delta\phi^n,\dots,\sqrt{Q_N}\Delta\phi^{n+2-k}\bigr).
\end{align*}
\end{lemma}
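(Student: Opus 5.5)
The plan is to reduce Lemma~\ref{lem:SVVcoerc} to Lemma~\ref{lem:BkCk} by a change of variables. Define the transformed sequence $\psi^j:=\sqrt{Q_N}\,\phi^j\in V_N$. Since $\sqrt{Q_N}$ is linear, it commutes with the finite linear combinations $\Bk$ and $\Ck$, whose coefficients are scalars (Table~\ref{tab:HS-coefs}). By (P3) it also commutes with $\Delta$ on $\bm V_N$. Hence
\[
\sqrt{Q_N}\,\Delta\Bk(\phi^{n+1})=\Delta\Bk(\psi^{n+1}),\qquad
\sqrt{Q_N}\,\Delta\Ck(\phi^{n+1})=\Delta\Ck(\psi^{n+1}),\qquad
\sqrt{Q_N}\,\Delta\phi^{j}=\Delta\psi^{j}.
\]

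With these identities, the left-hand side of \eqref{eq:SVVlem} becomes $\bigl(\Delta\Bk(\psi^{n+1}),\Delta\Ck(\psi^{n+1})\bigr)$. I then apply Lemma~\ref{lem:BkCk} to the sequence $\{\psi^j\}$. This is legitimate because $\psi^j\in V_N\subset\bm H^1_0\cap\bm H^2$, and Lemma~\ref{lem:BkCk} is an algebraic inequality valid for any sequence in that space. Its constants $\eta_k,\kappa_k,H_k,U_k$ depend only on $k$ and $\beta_k$, not on the sequence.

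Substituting back, $\norm{\Delta\Ck(\psi^{n+1})}=\norm{\sqrt{Q_N}\Delta\Ck(\phi^{n+1})}$ and $\norm{\Delta\psi^{n+1}}=\norm{\sqrt{Q_N}\Delta\phi^{n+1}}$. Likewise $\mathcal H_n(\psi)=\mathcal H_n^Q(\phi)$ and $\mathcal U_n(\psi)=\mathcal U_n^Q(\phi)$, directly from the definitions. This gives \eqref{eq:SVVlem}.

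The only point needing care is whether Lemma~\ref{lem:BkCk} holds as an inequality for arbitrary sequences, and not merely as a property of the scheme's solution. This is true: its proof in \cite{HuangShen2025} rests on Dahlquist G-stability applied to the $\Dk$ part and an algebraic treatment of the $\Fk$ part. Both concern scalar-coefficient linear combinations of the vectors $\Delta\phi^j$ in an inner-product space, so they hold for any sequence.

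If one prefers not to rely on this, there is a more robust route. Expand in the orthonormal eigenbasis $\{\Psi_{ij}\}$. Because $\Delta$ and $\sqrt{Q_N}$ are both diagonal there, every term of \eqref{eq:SVVlem} is a sum over modes $(i,j)$ of $\widehat Q_{ij}\ge0$ times the corresponding scalar sequence inequality. Each such scalar inequality is the mode-wise form of Lemma~\ref{lem:BkCk}, and summing with nonnegative weights preserves the inequality.
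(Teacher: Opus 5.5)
Your argument is correct. Your fallback is exactly the paper's proof: it expands in $\{\Psi_{ij}\}$, writes the left-hand side as $\sum_{i,j}\widehat Q_{ij}\,(\Delta\Bk(\phi^{n+1}),\Psi_{ij})(\Delta\Ck(\phi^{n+1}),\Psi_{ij})$, applies the scalar form of Lemma~\ref{lem:BkCk} to each coefficient sequence, and sums with the weights $\widehat Q_{ij}\ge0$.

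Your primary route is different. It invokes the vector form of Lemma~\ref{lem:BkCk} once, on the transformed sequence $\sqrt{Q_N}\phi^j$. This removes the mode-by-mode bookkeeping, and its only input is that Lemma~\ref{lem:BkCk} holds for arbitrary sequences, which is how the paper already uses it for $\phi=\bu$ and $\phi=\be$.

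The price is the commutation $\sqrt{Q_N}\Delta=\Delta\sqrt{Q_N}$ from (P3). This holds only when $\Delta$ on $\bm V_N$ is read as the discrete (Galerkin) Laplacian, as in the relation $-\Delta\Psi_{ij}=(\mu_i+\mu_j)\Psi_{ij}$ of Section~\ref{sec:scheme}. For the pointwise Laplacian, $\Delta\Psi_{ij}\notin\bm V_N$ and the two operators do not commute. The paper's modal computation never uses commutation, only that $\sqrt{Q_N}$ is diagonal with nonnegative symbol. It therefore also works with the truncated-expansion extension of $\sqrt{Q_N}$ to $\bm L^2$.

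You can keep both advantages by dropping $\psi^j$. Lemma~\ref{lem:BkCk} involves the sequence only through the vectors $w^j=\Delta\phi^j$ and is purely algebraic in them, as you yourself argued. So apply it directly to $w^j:=\sqrt{Q_N}\Delta\phi^j$, using that the linear map $\sqrt{Q_N}$ commutes with the scalar time combinations $\Bk$ and $\Ck$. This gives \eqref{eq:SVVlem} in one line, with no appeal to (P3).
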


\begin{proof}[Proof of Lemma~\ref{lem:SVVcoerc}]
Recall from Section~\ref{sec:scheme} the eigen-basis
$\{\Psi_{ij}\}_{0\le i,j\le M-1}$ ($M=N-1$),
$\Psi_{ij}(x,y)=\psi_i(x)\psi_j(y)$, which $L^2$-orthonormalises $V_N$. For
$v\in V_N$ write $\widehat v_{ij}:=(v,\Psi_{ij})$ for the coordinate of $v$
along $\Psi_{ij}$, so that $v=\sum_{i,j}\widehat v_{ij}\,\Psi_{ij}$. By
Section~\ref{sec:scheme}, $Q_N$ is the spectral multiplier
\[
Q_N\Psi_{ij}=\widehat Q_{ij}\,\Psi_{ij},\qquad
\widehat Q_{ij}:=\frac{\widehat Q_i\,\mu_i+\widehat Q_j\,\mu_j}{\mu_i+\mu_j}
\in[0,1],
\]
cf.~\eqref{eq:Qtilde}; note that only $\widehat Q_{ij}\in[0,1]$ and its
diagonality are used below, so the argument is independent of the particular
two-dimensional kernel. It commutes with $\Delta$ and
$\sqrt{Q_N}\Psi_{ij}=\sqrt{\widehat Q_{ij}}\,\Psi_{ij}$. Expanding
$\Delta\Bk(\phi^{n+1})$ and $\Delta\Ck(\phi^{n+1})$ in $\{\Psi_{ij}\}$ and using
orthonormality,
\[
(\sqrt{Q_N}\Delta\Bk(\phi^{n+1}),\sqrt{Q_N}\Delta\Ck(\phi^{n+1}))
= \sum_{i,j=0}^{M-1} \widehat Q_{ij}\,
\bigl(\Delta\Bk(\phi^{n+1}),\Psi_{ij}\bigr)
\bigl(\Delta\Ck(\phi^{n+1}),\Psi_{ij}\bigr).
\]
The proof of Lemma~\ref{lem:BkCk} in \cite{HuangShen2025} uses only the
algebraic structure of $\Bk$ and $\Ck$, so the inequality holds mode by mode in
the diagonalization basis. Multiplying the mode-$(i,j)$ version of
Lemma~\ref{lem:BkCk} by the non-negative $\widehat Q_{ij}$ and summing over
$i,j$ yields \eqref{eq:SVVlem}.
\end{proof}

\begin{lemma}\label{lem:Ck-uniform}
Denote $\Ck(\bu^j) = \sum_{q=0}^{k-1}c_{k,q}\bu^{j-k+1+q}$, $c_k=\max_{0\le q\le k-1}|c_{k,q}|$ and assume \\
$\sup_{j\le n}\norm{\nabla\bu^j} \le C_0$. Then
$\sup_{j\le n}\norm{\nabla\Ck(\bu^j)}\le k\,c_k\,C_0$.
\end{lemma}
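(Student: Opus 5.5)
The bound follows from linearity of $\Ck$, the triangle inequality, and the hypothesis applied at each time level. Write $\nabla\Ck(\bu^j)=\sum_{q=0}^{k-1}c_{k,q}\nabla\bu^{j-k+1+q}$; this holds because $\Ck$ is a fixed linear combination of the $k$ previous iterates and the gradient commutes with such combinations. Taking $L^2$ norms and applying the triangle inequality gives
\[
\norm{\nabla\Ck(\bu^j)}\le\sum_{q=0}^{k-1}|c_{k,q}|\,\norm{\nabla\bu^{j-k+1+q}}\le c_k\sum_{q=0}^{k-1}\norm{\nabla\bu^{j-k+1+q}}.
\]

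The only point needing care is that every index $j-k+1+q$ with $0\le q\le k-1$ lies in the range covered by the hypothesis. The largest index is $j\le n$. The smallest is $j-k+1$, which for the values of $j$ used in the scheme ($j\ge k-1$) is $\ge 0$, so it is one of the starting levels $\bu^0,\dots,\bu^{k-1}$ or a computed level. All of these are included in $\sup_{j\le n}\norm{\nabla\bu^j}\le C_0$. If the statement is meant to cover $j<k-1$ as well, I would adopt the convention that $\Ck(\bu^j)$ is only formed for $j\ge k-1$. Alternatively, I would note that the supremum is taken over all available indices, including the initial data.

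Each of the $k$ summands is then bounded by $C_0$, which gives $\norm{\nabla\Ck(\bu^j)}\le k\,c_k\,C_0$ uniformly in $j\le n$. Taking the supremum finishes the argument. There is no real obstacle beyond this index bookkeeping. The constant $k\,c_k$ depends only on $k$, via Table~\ref{tab:HS-coefs}: $c_2=4$, $c_3=48$, $c_4=594$. This is what makes the bound usable inside the induction for \eqref{eq:H1-apriori}.
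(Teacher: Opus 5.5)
Your proposal is correct and is the same argument as the paper's: linearity of $\Ck$, the triangle inequality, and bounding each of the $k$ terms by $c_k C_0$. Your extra remark that one needs $j\ge k-1$, so that every level $j-k+1+q$ is covered by the hypothesis, is a harmless clarification the paper leaves implicit, and your values $c_2=4$, $c_3=48$, $c_4=594$ agree with Table~\ref{tab:HS-coefs}.
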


\begin{proof}
The triangle inequality implies $\norm{\nabla\Ck(\bu^j)}\le \sum_q|c_{k,q}|\norm{\nabla\bu^{j-k+1+q}}
\le k c_k C_0$.
\end{proof}

\subsection{Proof of Theorem~\ref{thm:main}}
\label{sec_proof_thm_main}

We write $C$ for a generic positive constant depending only on $\Om$ and $k$,
and in particular not on $\nu$, $\dt$, or $n$. We use two tools from \HS. First,
the trilinear estimates
\begin{align}
b(\bu,\bv,\bw)&\le C\,\norm{\bu}_1\,\norm{\bv}_2\,\norm{\bw},\qquad
b(\bu,\bv,\bw)\le C\,\norm{\bu}_2\,\norm{\bv}_1\,\norm{\bw}.\label{eq:tri2}
\end{align}
Second, the discrete Gronwall lemma (\HS, Lemma~2.1): if
$a_n,b_n,c_n,d_n\ge0$ and $C_G,\tau>0$ satisfy, for all $m\ge1$,
\[
a_m+\tau\sum_{n=1}^{m}b_n\le\tau\sum_{n=0}^{m-1}a_n d_n+\tau\sum_{n=0}^{m-1}c_n+C_G,
\]
then
\[
a_m+\tau\sum_{n=1}^{m}b_n\le\exp\Bigl(\tau\sum_{n=0}^{m-1}d_n\Bigr)\Bigl(\tau\sum_{n=0}^{m-1}c_n+C_G\Bigr).
\]

\begin{proof}[Proof of Theorem~\ref{thm:main}]
We prove by induction on $n$. Assuming
\begin{equation}\label{eq:IH}
\norm{\nabla\bu^i}\le C_0,\qquad i=0,\dots,n
\end{equation}
(which holds for $i=0$), we shall prove $\norm{\nabla\bu^{n+1}}\le C_0$.

\bigskip
\noindent\textbf{Step 1.\quad Stability: proof of Part~1.}

\medskip
We take the inner product of \eqref{eq:SVV-vel}, written at level $i+1\le n$, with
$-\dt\,\Delta \Ck(\bu^{i+1})$, and estimate the resulting terms one at a time.
The time-derivative, convection, pressure, and forcing terms are exactly as in
the bare scheme; the combined operator $-(\nu+\eps_N Q_N)\Delta\Bk$ contributes,
beyond the viscous pairing, the SVV pairing treated in \eqref{eq:S1svv}.
For the time-derivative term, Lemma~\ref{lem:Gstab} gives
\begin{equation}\label{eq:S1time}
\bigl(\Ak(\bu^{i+1}),-\Delta \Ck(\bu^{i+1})\bigr)
\ge \norm{\nabla\bu^{i+1}}_{G_k}^2-\norm{\nabla\bu^{i}}_{G_k}^2.
\end{equation}
For the viscous term, Lemma~\ref{lem:BkCk} (with $\phi=\bu$) yields
\begin{equation}\label{eq:S1visc}
\begin{aligned}
\nu\dt\bigl(\Delta \Bk(\bu^{i+1}),\Delta \Ck(\bu^{i+1})\bigr)
&\ge \eta_k\nu\dt\norm{\Delta \Ck(\bu^{i+1})}^2+\nu\kappa_k\dt\norm{\Delta\bu^{i+1}}^2\\
&\quad+\nu\dt\bigl(\mathcal H_{i+1}(\bu)-\mathcal H_i(\bu)\bigr)
+\nu\dt\bigl(\mathcal U_{i+1}(\bu)-\mathcal U_i(\bu)\bigr).
\end{aligned}
\end{equation}
For the SVV term, Lemma~\ref{lem:SVVcoerc} gives
\begin{equation}\label{eq:S1svv}
\begin{aligned}
\eps_N\dt\bigl(Q_N\Delta \Bk(\bu^{i+1}),\Delta \Ck(\bu^{i+1})\bigr)
&\ge \eps_N\eta_k\dt\norm{\sqrt{Q_N}\Delta \Ck(\bu^{i+1})}^2
+\eps_N\kappa_k\dt\norm{\sqrt{Q_N}\Delta\bu^{i+1}}^2\\
&\quad+\eps_N\dt\bigl(\mathcal H^Q_{i+1}(\bu)-\mathcal H^Q_i(\bu)\bigr)
+\eps_N\dt\bigl(\mathcal U^Q_{i+1}(\bu)-\mathcal U^Q_i(\bu)\bigr),
\end{aligned}
\end{equation}
a nonnegative coercive contribution to the left-hand side.
For the convection term, we use Lemma\,\ref{lem:tri-paper} with
$\bu=\bv=\Ck(\bu^i)$, the bound $\norm{\Ck(\bu^i)}_2\le C\norm{\Delta \Ck(\bu^i)}$,
and Young's inequality with weight $\delta_1 \nu$, to obtain
\begin{equation}\label{eq:S1conv}
\dt\bigl|\bigl(\Ck(\bu^i)\!\cdot\!\nabla \Ck(\bu^i),\Delta \Ck(\bu^{i+1})\bigr)\bigr|
\le \frac{C\dt}{\nu^3}\norm{\nabla \Ck(\bu^i)}^6
+\delta_1\nu\dt\Bigl(\norm{\Delta \Ck(\bu^i)}^2+\norm{\Delta \Ck(\bu^{i+1})}^2\Bigr).
\end{equation}
For $\bv\in\mathbf H^2(\Om)$,
the Stokes pressure $p_s(\bv)$ satisfies the variational identity
(Theorem~1 of \cite{LiuLiuPego2007})
\begin{equation}\label{eq:LLPid}
(\nabla p_s(\bv),\nabla q)=-(\nabla\times\nabla\times\bv,\,\nabla q),\qquad\forall q\in H^1(\Om).
\end{equation}
Inserting \eqref{eq:LLPid} into the pressure step \eqref{eq:HS-pres} written at
level $i$ expresses the discrete pressure as
\begin{equation}\label{eq:S1pid}
(\nabla p^i,\nabla q)=\bigl(\bff^i-\bu^i\!\cdot\!\nabla\bu^i,\,\nabla q\bigr)
+\nu\,(\nabla p_s(\bu^i),\nabla q),\qquad\forall q\in H^1(\Om).
\end{equation}
Applying the explicit extrapolation $\Ck$ to \eqref{eq:S1pid} and then choosing
$q=\Ck(p^i)$ yields
\begin{equation}\label{eq:S1prep}
\norm{\nabla \Ck(p^i)}\le \norm{\Ck(\bff^i-\bu^i\!\cdot\!\nabla\bu^i)}
+\nu\,\norm{\nabla p_s(\Ck(\bu^i))}.
\end{equation}
We compute the inner product between $\nabla \Ck(p^i)$ and $-\Delta \Ck(\bu^{i+1})$ via Cauchy-Schwarz inequality and \eqref{eq:S1prep}. The forcing--convection
part is treated by Young's inequality with weight $\delta_\alpha\nu$. For the
Stokes-pressure part, Young's inequality with weight $\delta_\gamma\nu$ gives
\[
\nu \|\nabla p_s(\Ck(\bu^i))\| \,
\|\Delta \Ck(\bu^{i+1})\|
\le \frac{\nu}{4\delta_\gamma}\norm{\nabla p_s(\Ck(\bu^i))}^2
+\delta_\gamma\nu\norm{\Delta \Ck(\bu^{i+1})}^2,
\]
into which the Stokes-pressure estimate of Lemma~\ref{lem:LLP-paper} (with $\bu=\Ck(\bu^i)$) is
inserted, i.e., $\norm{\nabla p_s(\Ck(\bu^i))}^2 \le(\tfrac12+\eps)\norm{\Delta \Ck(\bu^i)}^2
+C\norm{\nabla \Ck(\bu^i)}^2$. Altogether, we obtain
\begin{equation}\label{eq:S1press}
\begin{aligned}
\dt\bigl|\bigl(\nabla \Ck(p^i),\Delta \Ck(\bu^{i+1})\bigr)\bigr|
&\le \frac{C\dt}{\nu}\norm{\Ck(\bff^i-\bu^i\!\cdot\!\nabla\bu^i)}^2
+\frac{\nu(1+2\eps)}{8\delta_\gamma}\dt\norm{\Delta \Ck(\bu^i)}^2\\
&\quad+\frac{C\nu\dt}{\delta_\gamma}\norm{\nabla \Ck(\bu^i)}^2
+(\delta_\alpha+\delta_\gamma)\nu\dt\norm{\Delta \Ck(\bu^{i+1})}^2.
\end{aligned}
\end{equation}
The first term on the right involves $\Ck$, a bounded combination of the
levels $j=i+1-k,\dots,i$, so it is controlled level by level. For each such $j$,
$\norm{\bu^j\!\cdot\!\nabla\bu^j}^2\le C\norm{\bu^j}_1^3\norm{\bu^j}_2
\le C C_0^3\norm{\Delta\bu^j}$ by \eqref{eq:IH} for $j\le i\le n$, and
$\norm{\Ck(\bff^i)}\le CM_f$. Hence
\begin{equation}\label{eq:S1f}
\frac{C\dt}{\nu}\norm{\Ck(\bff^i-\bu^i\!\cdot\!\nabla\bu^i)}^2
\le \frac{C\dt}{\nu}\,M_f^2+\frac{C C_0^3\dt}{\nu}\sum_{j=i+1-k}^{i}\norm{\Delta\bu^j}.
\end{equation}
Finally, the right-hand side of \eqref{eq:HS-vel} contributes
\begin{equation}\label{eq:S1rhs}
\dt\bigl(\bff^{\,i+\beta_k},-\Delta \Ck(\bu^{i+1})\bigr)
\le \frac{C\dt}{\nu}\,M_f^2+\delta_f\,\nu\dt\norm{\Delta \Ck(\bu^{i+1})}^2.
\end{equation}

Summing \eqref{eq:S1time}, \eqref{eq:S1visc}, \eqref{eq:S1svv}, \eqref{eq:S1conv}, \eqref{eq:S1press}, and \eqref{eq:S1rhs} for $i=k-1,\dots,m$, the time, viscous, and SVV contributions combine into the total energy
\[
\mathcal E^{\,i+1}:=\norm{\nabla\bu^{i+1}}_{G_k}^2
+\nu\dt\,\mathcal H_{i+1}(\bu)+\nu\dt\,\mathcal U_{i+1}(\bu)
+\eps_N\dt\,\mathcal H^Q_{i+1}(\bu)+\eps_N\dt\,\mathcal U^Q_{i+1}(\bu),
\]
whose increments telescope to $\mathcal E^{\,m+1}-\mathcal E^{\,k-1}$.  Using $\sum_i\dt\le T$ on the forcing, the summed bounds read
\begin{equation}\label{eq:S1summed}
\begin{aligned}
&\mathcal E^{m+1} - \mathcal E^{k-1} +
\Bigl[\eta_k - \tfrac{1+2\eps}{8\delta_\gamma}-(\delta_1+\delta_\alpha+\delta_\gamma+\delta_f)\Bigr]
\nu\dt\!\sum_{i=k-1}^{m}\!\norm{\Delta\Ck(\bu^{i+1})}^2
\\
&
+\kappa_k\,\nu\dt\!\sum_{i=k-1}^{m}\!\norm{\Delta\bu^{i+1}}^2
+\eps_N\eta_k\dt\!\sum_{i=k-1}^{m}\!\norm{\sqrt{Q_N}\Delta\Ck(\bu^{i+1})}^2
+\eps_N\kappa_k\dt\!\sum_{i=k-1}^{m}\!\norm{\sqrt{Q_N}\Delta\bu^{i+1}}^2\\
\le & 
\frac{C\dt}{\nu^3}\sum_{i=k-1}^{m}\norm{\nabla\Ck(\bu^i)}^6
+\frac{C\nu\dt}{\delta_\gamma}\sum_{i=k-1}^{m}\norm{\nabla\Ck(\bu^i)}^2
+\frac{CT M_f^2}{\nu}
+\frac{CC_0^3}{\nu}\,\dt\!\sum_{i=k-1}^{m}\sum_{j=i+1-k}^{i}\norm{\Delta\bu^j}.
\end{aligned}
\end{equation}
 As
$\mathcal H_{m+1}(\bu),\mathcal U_{m+1}(\bu),\mathcal H^Q_{m+1}(\bu),\mathcal U^Q_{m+1}(\bu)\ge0$, the final level satisfies
$\mathcal E^{\,m+1}\ge\lambda_k^g\norm{\nabla\bu^{m+1}}^2$, so these endpoint
terms are discarded, while $\mathcal E^{\,k-1}$ depends only on the first
$k-1$ steps. By the hypotheses on the startup values and the smoothness of the
exact solution, $\norm{\nabla\bu^i}\le C_0$ for $i\le k-1$, so $M_0\le CC_0^2$. We choose $\delta_1,\delta_\alpha,\delta_f,\eps$ small and $\delta_\gamma$ suitable so
that
\begin{equation}\label{eq:coerc}
\eta_k-\frac{1+2\eps}{8\delta_\gamma}-(\delta_1+\delta_\alpha+\delta_\gamma+\delta_f)\ge 0
\end{equation}
(corresponding to condition (4.21) of \HS). This is feasible because
$\frac{1}{8\delta_\gamma}+\delta_\gamma$ has minimum $\frac{1}{\sqrt{2}}\approx0.7071$,
attained at $\delta_\gamma=\frac{1}{2\sqrt{2}}$, which lies just below $\eta_k=0.71$.
The margin $\eta_k-\frac{1}{\sqrt2}\approx 2.9\times10^{-3}$ is small and must also
absorb $\delta_1+\delta_\alpha+\delta_f$ and the excess
$\frac{2\eps}{8\delta_\gamma}$, so these parameters are taken correspondingly small.
The resulting nonnegative term $\nu\dt\norm{\Delta \Ck(\bu^j)}^2$ is then dropped.
 Using \eqref{eq:IH},
$\norm{\nabla \Ck(\bu^i)}^6\le CC_0^6$ and $\norm{\nabla \Ck(\bu^i)}^2\le CC_0^2$
(the latter, with $\delta_\gamma$ bounded away from $0$ and $m\dt\le T$,
producing the $CC_0^2\nu T$ term), and absorbing the first-power term
(the stencil sum in \eqref{eq:S1f} re-indexes to a single sum, up to the
factor $k$ absorbed in $C$) by
\[
\frac{CC_0^3}{\nu}\,\dt\sum_{i=k-1}^{m}\norm{\Delta\bu^i}
\le \frac{\kappa_k}{2}\,\nu\dt\sum_{i=k-1}^{m}\norm{\Delta\bu^i}^2
+\frac{CC_0^6T}{\nu^3},
\]
we obtain the $\nu$-explicit form of (4.22)--(4.23) of \HS, which is Part~1,
\eqref{eq:svv-stab}:
\begin{equation}\label{eq:Ca1}
\begin{aligned}
C_{a1}:=&\sup_{m\le n}\norm{\nabla\bu^{m+1}}^2+\nu\dt\sum_{i=0}^{n}\norm{\Delta\bu^{i+1}}^2
+\eps_N\dt\sum_{i=0}^{n}\norm{\sqrt{Q_N}\Delta\bu^{i+1}}^2
\\
\le& \frac{CC_0^6T}{\nu^3}+\frac{CTM_f^2}{\nu}+CC_0^2(1+\nu T).
\end{aligned}
\end{equation}
In particular $\dt\sum_i\norm{\Delta\bu^i}^2\le C_{a1}/\nu=O(\nu^{-4})$.

\bigskip
\noindent\textbf{Step 2.\quad Error estimate for $\norm{\nabla\be^{n+1}}$.}

\medskip
Subtracting from \eqref{eq:HS-vel} the identity satisfied by the exact
solution yields the error equation (\HS, (4.24)): with
$\be^i:=\bu^i-\bu(t^i)$ and $e_p^i:=p^i-p(t^i)$,
\begin{equation}\label{eq:err}
\frac{\Ak(\be^{i+1})}{\dt}-(\nu+\eps_N Q_N)\Delta \Bk(\be^{i+1})+\nabla \Ck(e_p^i)
+\Ck(\bu^i)\!\cdot\!\nabla \Ck(\bu^i)-\Ck(\bu(t^i))\!\cdot\!\nabla \Ck(\bu(t^i))
=P_k^i+Q_k^i+R_k^i+S_k^i+T_k^i,
\end{equation}
where $P_k^i,Q_k^i,R_k^i,S_k^i$ are the pressure, viscous, time, and convection
truncation errors of \HS, (4.25)--(4.28), and
$T_k^i:=\eps_N Q_N\Delta \Bk(\bu(t^{i+1}))$ is the SVV truncation error, which has
no continuous counterpart and satisfies
$\norm{T_k^i}\le\eps_N\norm{\Delta\Bk(\bu(t^{i+1}))}\le C\eps_N\sup_t\norm{\Delta\bu(t)}$.
Taking the $L^2$ inner product of
\eqref{eq:err} with $-\dt\,\Delta \Ck(\be^{i+1})$, the time-derivative and
viscous terms are estimated exactly as in \eqref{eq:S1time}--\eqref{eq:S1visc},
with $\bu$ replaced by $\be$. The SVV operator contributes the coercive term,
bounded below by Lemma~\ref{lem:SVVcoerc} (the analogue of \eqref{eq:S1svv} with
$\bu$ replaced by $\be$),
\begin{equation}\label{eq:S2svv}
\begin{aligned}
\eps_N\dt\bigl(Q_N\Delta\Bk(\be^{i+1}),\Delta\Ck(\be^{i+1})\bigr)
&\ge \eps_N\eta_k\dt\,\norm{\sqrt{Q_N}\Delta\Ck(\be^{i+1})}^2
+\eps_N\dt\bigl(\mathcal H^Q_{i+1}(\be)-\mathcal H^Q_i(\be)\bigr)\\
&\quad+\eps_N\dt\bigl(\mathcal U^Q_{i+1}(\be)-\mathcal U^Q_i(\be)\bigr),
\end{aligned}
\end{equation}
with $\mathcal H^Q(\be),\mathcal U^Q(\be)\ge0$. Since $\sqrt{Q_N}$ is
self-adjoint, Young's inequality gives
\begin{equation}\label{eq:S2Tk}
\begin{aligned}
\dt\,\bigl|(T_k^i,\Delta\Ck(\be^{i+1}))\bigr|
&=\eps_N\dt\,\bigl|(\sqrt{Q_N}\Delta\Bk(\bu(t^{i+1})),\,\sqrt{Q_N}\Delta\Ck(\be^{i+1}))\bigr|\\
&\le \frac{\eps_N\dt}{2\eta_k}\,\norm{\sqrt{Q_N}\Delta\Bk(\bu(t^{i+1}))}^2
+\frac{\eta_k}{2}\,\eps_N\dt\,\norm{\sqrt{Q_N}\Delta\Ck(\be^{i+1})}^2.
\end{aligned}
\end{equation}
The last term is absorbed into the coercivity of \eqref{eq:S2svv}, leaving
$\tfrac{\eta_k}{2}\eps_N\dt\norm{\sqrt{Q_N}\Delta\Ck(\be^{i+1})}^2$ on the left,
and summation with
$\norm{\sqrt{Q_N}\Delta\Bk(\bu(t^{i+1}))}\le C\sup_t\norm{\sqrt{Q_N}\Delta\bu(t)}$
bounds the new free term by the data constant
\begin{equation}\label{eq:Csvv}
D_{\rm svv}:=\frac{\eps_N\dt}{2\eta_k}\sum_{i=k-1}^{n}\norm{\sqrt{Q_N}\Delta\Bk(\bu(t^{i+1}))}^2
\le c\,\eps_N T\sup_t\norm{\sqrt{Q_N}\Delta\bu(t)}^2,
\end{equation}
added to the data of the master estimate \eqref{eq:master}.

The convective term is split into
\begin{equation}\label{eq:split}
\Ck(\bu^i)\!\cdot\!\nabla \Ck(\bu^i)-\Ck(\bu(t^i))\!\cdot\!\nabla \Ck(\bu(t^i))
=\Ck(\be^i)\!\cdot\!\nabla \Ck(\bu^i)+\Ck(\bu(t^i))\!\cdot\!\nabla \Ck(\be^i).
\end{equation}
Each piece is bounded with one inequality in \eqref{eq:tri2}. With
$\norm{\Ck(\bu^i)}_2\le C\norm{\Delta \Ck(\bu^i)}$ and Young's inequality with
weight $\delta_2\nu$,
\begin{equation}\label{eq:S2conv}
\begin{aligned}
\dt\bigl|\bigl(\Ck(\be^i)\!\cdot\!\nabla \Ck(\bu^i)&+\Ck(\bu(t^i))\!\cdot\!\nabla \Ck(\be^i),\,
-\Delta \Ck(\be^{i+1})\bigr)\bigr|\\
&\le \frac{C\dt}{\nu}\norm{\nabla \Ck(\be^i)}^2\Bigl(\norm{\Delta \Ck(\bu^i)}^2+\norm{\Ck(\bu(t^i))}_2^2\Bigr)
+\delta_2\nu\dt\norm{\Delta \Ck(\be^{i+1})}^2 .
\end{aligned}
\end{equation}

The pressure error is treated through the Stokes pressure, as in \HS, (4.34)--(4.39):
from the error form of \eqref{eq:HS-pres},
\begin{equation}\label{eq:S2prep}
\bigl(\nabla \Ck(e_p^i),\nabla q\bigr)
=\bigl(\Ck(\bu(t^i)\!\cdot\!\nabla\bu(t^i)-\bu^i\!\cdot\!\nabla\bu^i),\nabla q\bigr)
+\nu\bigl(\nabla p_s(\Ck(\be^i)),\nabla q\bigr),\qquad\forall q\in H^1(\Om).
\end{equation}
Taking $q=\Ck(e_p^i)$ gives, as in \HS, (4.36),
\[
\norm{\nabla \Ck(e_p^i)}\le\norm{\Ck\bigl(\bu(t^i)\!\cdot\!\nabla\bu(t^i)-\bu^i\!\cdot\!\nabla\bu^i\bigr)}
+\nu\norm{\nabla p_s(\Ck(\be^i))}.
\]
Using $\bu(t^i)\!\cdot\!\nabla\bu(t^i)-\bu^i\!\cdot\!\nabla\bu^i
=-\be^i\!\cdot\!\nabla\bu^i-\bu(t^i)\!\cdot\!\nabla\be^i$, and the Sobolev and
Poincar\'e inequalities give
\[
\norm{\Ck\bigl(\bu(t^i)\!\cdot\!\nabla\bu(t^i)-\bu^i\!\cdot\!\nabla\bu^i\bigr)}^2
\le C\!\!\sum_{j=i+1-k}^{i}\!\!\bigl(\norm{\nabla\be^j}^2\norm{\Delta\bu^j}^2+\norm{\bu(t^j)}_2^2\norm{\nabla\be^j}^2\bigr).
\]
We then apply Cauchy--Schwarz inequality and Young's inequalities of weights $\propto\nu$, and
Lemma~\ref{lem:LLP-paper} for the Stokes pressure to yield
\begin{equation}\label{eq:S2press}
\begin{aligned}
& \dt\bigl|\bigl(\nabla \Ck(e_p^i),-\Delta \Ck(\be^{i+1})\bigr)\bigr|
\le \frac{C\dt}{\nu}\sum_{j=i+1-k}^{i}\norm{\nabla\be^j}^2
      \Bigl(\norm{\Delta\bu^j}^2+\norm{\bu(t^j)}_2^2\Bigr)\\
&\qquad+\frac{\nu(1+2\eps)}{8\delta_\gamma}\,\dt\,\norm{\Delta \Ck(\be^i)}^2
       +\frac{C\nu\dt}{\delta_\gamma}\,\norm{\nabla \Ck(\be^i)}^2
 +(\delta_\alpha+\delta_\gamma)\,\nu\dt\,\norm{\Delta \Ck(\be^{i+1})}^2 .
\end{aligned}
\end{equation}

For the four truncation errors $P_k^i,Q_k^i,R_k^i,S_k^i$ (\HS, (4.25)--(4.28):
pressure, viscous, time, convection), Taylor expansion about $t^{i+\beta_k}$ gives, after summation,
\begin{equation}\label{eq:trunc}
\frac{\dt}{\nu}\sum_i\Bigl(\norm{P_k^i}^2+\norm{Q_k^i}^2+\norm{R_k^i}^2+\norm{S_k^i}^2\Bigr)
\le \frac{C(1+\nu^2)}{\nu}\dt^{2k}\mathcal R_k,
\end{equation}
where the factor $\nu^2$ is only from
$Q_k^i= -\nu \Delta {\bf u}(t^{i+\beta_k}) + \nu\Delta \Bk({\bf u}(t^{i+1}))$.

Combining the above, dropping the nonnegative telescoping and dissipation terms,
keeping the coefficient of $\nu\dt\norm{\Delta \Ck(\be^j)}^2$ positive as in
\eqref{eq:coerc}, and summing for $i=k-1,\dots,n$, we obtain the master estimate.
The conversion of the extrapolated quantities $\norm{\Delta \Ck(\be^i)}^2$ into the true
error norms $\norm{\Delta\be^i}^2$ is supplied by the $\Bk=\eta_k\Ck+D_k+\Fk$ decomposition:
the $\Fk$ coercivity term $\nu\kappa_k\dt\norm{\Delta\be^{i+1}}^2$ of Lemma~\ref{lem:BkCk},
exactly as in \HS, (4.40)--(4.46), absorbs the remaining $\norm{\Delta \Ck(\be^i)}^2$
contributions and leaves a positive multiple of $\nu\dt\sum_i\norm{\Delta\be^i}^2$, absorbed into $C$.
We thus reach the analogue of \HS, (4.46):
\begin{equation}\label{eq:master}
\begin{aligned}
& \norm{\nabla\be^{n+1}}^2+\nu\dt\sum_{i=k}^{n+1}\norm{\Delta\be^i}^2
+\frac{\eta_k\eps_N\dt}{2}\sum_{i=k}^{n+1}\norm{\sqrt{Q_N}\Delta\Ck(\be^i)}^2
\le \frac{C}{\nu}\dt\sum_{i=k-1}^{n}\norm{\nabla\be^i}^2
     \bigl(\norm{\Delta\bu^i}^2+\norm{\bu(t^i)}_2^2\bigr)\\
&\quad+\frac{C}{\nu}\dt\sum_{i=k-1}^{n}\norm{\nabla\Ck(\be^i)}^2
     \bigl(\norm{\Delta\Ck(\bu^i)}^2+\norm{\Ck(\bu(t^i))}_2^2+1\bigr)
     +\frac{C(1+\nu^2)}{\nu}\dt^{2k}\mathcal R_k+M_0^{\rm err}+D_{\rm svv},
\end{aligned}
\end{equation}
with $M_0^{\rm err}\le C\dt^{2k}$ and the Gronwall driver
\begin{equation*}
d_i=\norm{\Delta\bu^i}^2+\norm{\bu(t^i)}_2^2
+\!\!\sum_{q=0}^{\min\{k-1,\,n-i\}}\!\!\bigl(\norm{\Delta\Ck(\bu^{i+q})}^2+\norm{\Ck(\bu(t^{i+q}))}_2^2+1\bigr).
\end{equation*}

Note \eqref{eq:master} is in the form
required by the discrete Gronwall lemma, with $\tau=\dt$, $a_i=\norm{\nabla\be^i}^2$,
$b_i=\nu \norm{\Delta\be^i}^2$, weights $\tfrac{C}{\nu}d_i$, data
$\tau\sum c_i=\tfrac{C(1+\nu^2)}{\nu}\dt^{2k}\mathcal R_k$, and constant
$C_G=M_0^{\rm err}+D_{\rm svv}$ (the nonnegative SVV coercive term on the left of
\eqref{eq:master} rides through the lemma unchanged). By Step~1, \eqref{eq:Ca1}, and the regularity of the exact
solution,
\begin{equation}\label{eq:sumd}
\begin{aligned}
\dt\sum_{i=k-1}^{n}\frac{C}{\nu}\,d_i
&\le \frac{C}{\nu}\Bigl(\dt\sum_{i=k-1}^{n}\bigl(\norm{\Delta\bu^i}^2+\norm{\Delta \Ck(\bu^i)}^2\bigr)
   +\dt\sum_{i=k-1}^{n}\bigl(\norm{\bu(t^i)}_2^2+\norm{\Ck(\bu(t^i))}_2^2\bigr)+T\Bigr)\\
&\le \frac{C}{\nu}\Bigl(\frac{C_{a1}}{\nu}+TM_2^2+T\Bigr)
\le \frac{CC_0^6T}{\nu^5}+\frac{CTM_f^2}{\nu^3}+\frac{CTM_2^2}{\nu}+\frac{CC_0^2T}{\nu}+\frac{CC_0^2}{\nu^2}.
\end{aligned}
\end{equation}
the $\nu^{-5}$ arising from $\tfrac{1}{\nu}\cdot\tfrac{C_{a1}}{\nu}$ with
$C_{a1}=O(\nu^{-3})$. The Gronwall lemma then yields the analogue of \HS, (4.48):
\begin{equation}\label{eq:gron}
\begin{aligned}
\norm{\nabla\be^{n+1}}^2
& +\nu\dt\sum_{i=0}^{n+1}\norm{\Delta\be^i}^2
+\frac{\eta_k\eps_N\dt}{2}\sum_{i=0}^{n+1}\norm{\sqrt{Q_N}\Delta\Ck(\be^i)}^2
\le \Bigl[\frac{C(1+\nu^2)\mathcal R_k}{\nu}\dt^{2k}+M_0^{\rm err}+D_{\rm svv}\Bigr]\,\mathcal G\\
&=:C_{a2}^{\rm vel}\dt^{2k}+D_{\rm svv}\,\mathcal G,
\qquad \mathcal G:=\exp\!\Bigl(\frac{CC_0^6T}{\nu^5}+\frac{CTM_f^2}{\nu^3}+\frac{CTM_2^2}{\nu}+\frac{CC_0^2T}{\nu}+\frac{CC_0^2}{\nu^2}\Bigr).
\end{aligned}
\end{equation}

To close the induction, the triangle inequality and
$\norm{\nabla\bu(t^{n+1})}\le C_0-1$ give
\[
\norm{\nabla\bu^{n+1}}\le\norm{\nabla\bu(t^{n+1})}+\norm{\nabla\be^{n+1}}
\le (C_0-1)+\sqrt{C_{a2}^{\rm vel}\,\dt^{2k}+D_{\rm svv}\,\mathcal G}\le C_0
\]
once $\dt$ and $\eps_N$ are small enough that
$C_{a2}^{\rm vel}\dt^{2k}+D_{\rm svv}\mathcal G\le1$, which completes the induction.

\bigskip
\noindent\textbf{Step 3.\quad Error estimate for the pressure.}

\medskip
Setting $q=e_p^i$ in the error form of \eqref{eq:HS-pres} and using Lemma~\ref{lem:LLP-paper}
and the Sobolev inequality, as in \HS, (4.51), yield
\begin{equation}\label{eq:S3}
\norm{\nabla e_p^i}^2\le C\norm{\nabla\be^i}^2\Bigl(\norm{\Delta\bu^i}^2+\norm{\bu(t^i)}_2^2\Bigr)+2\nu^2\norm{\Delta\be^i}^2+C\nu^2\norm{\nabla\be^i}^2,\quad 1\le i\le n+1.
\end{equation}
Multiplying by $\dt/\nu$, summing $i=1,\dots,n+1$, and using \eqref{eq:Ca1},
\eqref{eq:gron} together with $\dt\sum_i\norm{\Delta\bu^i}^2\le C_{a1}/\nu$,
\begin{equation}\label{eq:S3sum}
\frac{\dt}{\nu}\sum_{i=1}^{n+1}\norm{\nabla e_p^i}^2\le C_{a2}^{\rm press}\dt^{2k}+\mathcal B\,D_{\rm svv}\,\mathcal G,
\qquad C_{a2}^{\rm press}\le C\,\mathcal B\,C_{a2}^{\rm vel},
\end{equation}
with $\mathcal B$ as in \eqref{eq:svv-Ca2}. The Stokes-pressure step multiplies the
SVV data term by the same factor $\mathcal B$ that multiplies $C_R$, because the
first term on the right of \eqref{eq:S3} inherits $D_{\rm svv}\mathcal G$ from
\eqref{eq:gron} through $\sup_i\norm{\nabla\be^i}^2$.

Adding \eqref{eq:gron} and \eqref{eq:S3sum} gives Part~2, \eqref{eq:svv-err}, with
amplification $\mathcal B$, data factor $C_R$, and Gronwall factor $\mathcal G$ as in
\eqref{eq:svv-Ca2}. The velocity part contributes $D_{\rm svv}\mathcal G$ and the
pressure part $\mathcal B\,D_{\rm svv}\mathcal G$, whose sum is bounded by
$2\mathcal B\,D_{\rm svv}\mathcal G$ since $\mathcal B\ge1$, and the factor $2$ is
absorbed into $D_{\rm svv}$.
Setting $\eps_N=0$ annihilates the coercive SVV term on the left and $D_{\rm svv}$
on the right, recovering the $\nu$-dependent version of
\cite[Thm.~4.1]{HuangShen2025}.
\end{proof}

\section{Numerical experiments}
\label{sec:numerics}
In this section, we present several two-dimensional numerical experiments to demonstrate the stability and accuracy of the proposed SVV-stabilized higher-order consistent splitting scheme with spectral spatial discretizations. In Examples 1 and 2, we employ the Legendre--Galerkin method \cite{GuermondShen2003,Shen1994}. On the domain $(-1,1)^2$, the velocity is approximated in the tensor-product space $X_N\otimes X_N$, with $X_N$ defined in Section~\ref{sec:scheme}, while the pressure is approximated in $Y_N\otimes Y_N$, where $Y_N=\operatorname{span}\{L_m:m=0,1,\ldots,N\}$ and $L_m$ denotes the Legendre polynomial of degree $m$. In Example 3, we employ a Fourier--trigonometric spatial discretization, whose details are provided in Appendix~\ref{app:fourier-trig}.

\subsection{Example 1: convergence test}
\label{sec:conv}

We verify that the scheme attains its design temporal order $k$, and the SVV term does not degrade that order where the temporal error dominates. We use the manufactured solution of \cite[Example~2]{HuangShen2025} on
$\Om=(-1,1)^2$,
\[
\bu=\bigl(\sin(2\pi y)\sin^2(\pi x),\,-\sin(2\pi x)\sin^2(\pi y)\bigr)\sin t,
\qquad p=\cos(\pi x)\sin(\pi y)\sin t.
\]
We take $N=128$ modes in each direction so that the spatial error is negligible
against the temporal error, integrate to $T=1$, and report the relative $L^2$
errors $E_u=\norm{\bu^n-\bu(t^n)}/\norm{\bu(t^n)}$ and
$E_p=\norm{p^n-p(t^n)}/\norm{p(t^n)}$ at $T=1$, comparing the bare scheme
($C_{\rm svv}=0$) with the SVV-stabilized scheme ($C_{\rm svv}=1$,
$\eps_N=C_{\rm svv}/M$, kernel \eqref{eq:MKTkernel}). We consider two viscosities,
$\nu=10^{-3}$ and $\nu=10^{-4}$, and generate the first $k$ levels by the
Richardson-BE self-start, except where the exact solution is used for comparison.

When $\nu=10^{-3}$, the bare scheme achieves the optimal convergence rates, as shown in Tables~\ref{tab:conv-nu3-k2}--\ref{tab:conv-nu3-k4}. For $k=2$, the SVV-stabilized scheme produces results nearly identical to those of the bare scheme, whereas for $k=3$ and $k=4$, the velocity error saturates at approximately $10^{-4}$, as observed in Table~\ref{tab:conv-nu3-k3} at $\delta t=0.00625$ and in Table~\ref{tab:conv-nu3-k4} at $\delta t=0.0125$ and $0.00625$. This saturation is caused by the SVV term $\varepsilon_N Q_N\Delta B_k({\bf u}^{n+1})$ in scheme~\eqref{eq:SVV-vel}, which contributes the error term $\mathcal B D_{\rm svv}$ in \eqref{eq:svv-err}, where $D_{\rm svv}\le c\,\eps_N T\sup_t\norm{\sqrt{Q_N}\Delta\bu(t)}^2$. For the simulations considered here, this upper bound is independent of $\dt$ and remains below the temporal discretization error over most of the $\dt$ range in Tables~\ref{tab:conv-nu3-k2}--\ref{tab:conv-nu3-k4}, becoming visible only for the smallest time steps when $k=3$ and $k=4$.

\begin{table}[H]
\small
\centering
\caption{$\nu=10^{-3}$, $N=128$, temporal convergence for $k=2$ ($\beta_2=3$) at $T=1$. }
\label{tab:conv-nu3-k2}
\begin{tabular}{c cccc cccc}
\hline
 & \multicolumn{4}{c}{$E_u$} & \multicolumn{4}{c}{$E_p$}\\
\cline{2-5}\cline{6-9}
$\dt$ & bare & order & SVV & order & bare & order & SVV & order\\
\hline
$0.1000$ & $1.63\times10^{-1}$ & --     & $1.62\times10^{-1}$ & --     & $1.15\times10^{-1}$ & --     & $1.15\times10^{-1}$ & --\\
$0.0500$ & $5.09\times10^{-2}$ & $1.67$ & $5.09\times10^{-2}$ & $1.67$ & $3.83\times10^{-2}$ & $1.59$ & $3.82\times10^{-2}$ & $1.59$\\
$0.0250$ & $1.46\times10^{-2}$ & $1.80$ & $1.46\times10^{-2}$ & $1.80$ & $1.12\times10^{-2}$ & $1.77$ & $1.12\times10^{-2}$ & $1.77$\\
$0.0125$ & $3.90\times10^{-3}$ & $1.90$ & $3.89\times10^{-3}$ & $1.90$ & $3.03\times10^{-3}$ & $1.89$ & $3.03\times10^{-3}$ & $1.89$\\
$0.00625$ & $1.00\times10^{-3}$ & $1.96$ & $1.01\times10^{-3}$ & $1.95$ & $7.82\times10^{-4}$ & $1.95$ & $7.83\times10^{-4}$ & $1.95$\\
\hline
\end{tabular}
\end{table}

\begin{table}[H]
\small
\centering
\caption{$\nu=10^{-3}$, $N=128$, temporal convergence for $k=3$ ($\beta_3=6$) at $T=1$. }
\label{tab:conv-nu3-k3}
\begin{tabular}{c cccc cccc}
\hline
 & \multicolumn{4}{c}{$E_u$} & \multicolumn{4}{c}{$E_p$}\\
\cline{2-5}\cline{6-9}
$\dt$ & bare & order & SVV & order & bare & order & SVV & order\\
\hline
$0.1000$ & $1.31\times10^{-1}$ & --     & $1.30\times10^{-1}$ & --     & $8.69\times10^{-2}$ & --     & $8.68\times10^{-2}$ & --\\
$0.0500$ & $3.23\times10^{-2}$ & $2.01$ & $3.23\times10^{-2}$ & $2.02$ & $2.22\times10^{-2}$ & $1.97$ & $2.22\times10^{-2}$ & $1.97$\\
$0.0250$ & $5.40\times10^{-3}$ & $2.58$ & $5.39\times10^{-3}$ & $2.58$ & $3.85\times10^{-3}$ & $2.53$ & $3.85\times10^{-3}$ & $2.53$\\
$0.0125$ & $7.37\times10^{-4}$ & $2.87$ & $7.43\times10^{-4}$ & $2.86$ & $5.28\times10^{-4}$ & $2.87$ & $5.28\times10^{-4}$ & $2.87$\\
$0.00625$ & $9.45\times10^{-5}$ & $2.96$ & $1.42\times10^{-4}$ & $2.39$ & $6.73\times10^{-5}$ & $2.97$ & $6.90\times10^{-5}$ & $2.94$\\
\hline
\end{tabular}
\end{table}

\begin{table}[H]
\small
\centering
\caption{$\nu=10^{-3}$, $N=128$, temporal convergence for $k=4$ ($\beta_4=9$) at $T=1$.  }
\label{tab:conv-nu3-k4}
\begin{tabular}{c cccc cccc}
\hline
 & \multicolumn{4}{c}{$E_u$} & \multicolumn{4}{c}{$E_p$}\\
\cline{2-5}\cline{6-9}
$\dt$ & bare & order & SVV & order & bare & order & SVV & order\\
\hline
$0.10000$ & $1.65\times10^{-2}$ & --     & $1.65\times10^{-2}$ & --     & $1.18\times10^{-2}$ & --     & $1.18\times10^{-2}$ & --\\
$0.05000$ & $4.21\times10^{-3}$ & $1.97$ & $5.32\times10^{-3}$ & $1.63$ & $3.21\times10^{-3}$ & $1.88$ & $3.22\times10^{-3}$ & $1.88$\\
$0.02500$ & $8.11\times10^{-4}$ & $2.37$ & $5.60\times10^{-4}$ & $3.25$ & $4.54\times10^{-4}$ & $2.82$ & $4.24\times10^{-4}$ & $2.92$\\
$0.01250$ & $4.43\times10^{-5}$ & $4.19$ & $1.14\times10^{-4}$ & $2.29$ & $3.48\times10^{-5}$ & $3.70$ & $3.53\times10^{-5}$ & $3.59$\\
$0.00625$ & $3.08\times10^{-6}$ & $3.85$ & $1.06\times10^{-4}$ & $0.11$ & $2.43\times10^{-6}$ & $3.84$ & $1.10\times10^{-5}$ & $1.68$\\
\hline
\end{tabular}
\end{table}

At $\nu=10^{-4}$ ($\Reyn=10^4$), the bare scheme is no longer stable for most values of $\delta t$, as shown in Table~\ref{tab:blowup-nu4}. The table reports the relative velocity error $E_u$ at $T=1$ for $k=2,3,4$, using two initialization procedures for the bare scheme: the exact solution and Richardson-BE initialization.
The bare scheme diverges under \emph{both} initialization procedures, with the error reaching $10^{14}$--$10^{25}$ or overflowing to a non-finite value for most choices of $\dt$; only a few isolated cases with sufficiently small $\dt$ remain stable. Because the exact initialization supplies the true solution at the first $k$ time levels and the computation still blows up, the instability is attributable to the bare spatial discretization at this Reynolds number rather than to the initialization procedure. In contrast, the SVV-stabilized scheme with the Richardson-BE initialization yields stable and convergent solutions for every tested $\dt$, with the attainable accuracy limited by the SVV saturation level $\approx 10^{-4}$  discussed above. The pressure error exhibits the same behavior and is therefore omitted.

\begin{table}[H]
\small
\centering
\caption{$\nu=10^{-4}$ ($\Reyn=10^4$), $N=128$. Relative $L^2$ velocity error $E_u$ at $T=1$ under the bare scheme with the exact and Richardson-BE initializations and the SVV-stabilized scheme with Richardson-BE initialization.  ``NaN'' denotes a non-finite (overflowed) value. The SAV column is the second-order
GSAV consistent splitting scheme of \citet{AlhomsiWuZheng2026} ($\beta_2=4$, exact start), listed in the $k=2$ block only since it is a second-order method. The last column is
the observed order of the SVV velocity error.}
\label{tab:blowup-nu4}
\begin{tabular}{c c cccc c}
\hline
$k$ & $\dt$ & bare, exact start & bare, Richardson-BE & SAV & SVV, Richardson-BE & order\\
\hline
\multirow{5}{*}{$2$}
 & $0.1000$ & $1.66\times10^{-1}$ & $1.66\times10^{-1}$ & $2.54\times10^{-1}$ & $1.77\times10^{-1}$ & --\\
 & $0.0500$ & $6.28\times10^{-2}$ & $2.15\times10^{2}$  & $1.15\times10^{-1}$ & $7.62\times10^{-2}$ & $1.22$\\
 & $0.0250$ & $1.40\times10^{1}$  & $7.09\times10^{25}$ & $1.27\times10^{0}$  & $1.49\times10^{-2}$ & $2.35$\\
 & $0.0125$ & $7.14\times10^{-1}$ & $4.32\times10^{14}$ & $9.92\times10^{-1}$ & $3.99\times10^{-3}$ & $1.90$\\
 & $0.00625$ & $1.03\times10^{-3}$ & $1.03\times10^{-3}$ & $1.71\times10^{-3}$ & $1.04\times10^{-3}$ & $1.94$\\
\hline
\multirow{5}{*}{$3$}
 & $0.1000$ & $1.34\times10^{-1}$ & $1.42\times10^{-1}$ & -- & $1.35\times10^{-1}$ & --\\
 & $0.0500$ & $1.52\times10^{-1}$ & $1.81\times10^{16}$ & -- & $1.29\times10^{-1}$ & $0.07$\\
 & $0.0250$ & $3.47\times10^{25}$ & $\mathrm{NaN}$      & -- & $5.90\times10^{-3}$ & $4.45$\\
 & $0.0125$ & $4.64\times10^{20}$ & $\mathrm{NaN}$      & -- & $7.78\times10^{-4}$ & $2.92$\\
 & $0.00625$ & $9.81\times10^{-5}$ & $3.47\times10^{-4}$ & -- & $1.92\times10^{-4}$ & $2.02$\\
\hline
\multirow{4}{*}{$4$}
 & $0.05000$ & $4.75\times10^{-3}$ & $4.17\times10^{2}$  & -- & $7.39\times10^{1}$   & --\\
 & $0.02500$ & $1.76\times10^{16}$ & $\mathrm{NaN}$      & -- & $1.04\times10^{-2}$ & $12.79$\\
 & $0.01250$ & $1.20\times10^{14}$ & $\mathrm{NaN}$      & -- & $1.70\times10^{-4}$ & $5.94$\\
 & $0.00625$ & $1.29\times10^{-5}$ & $1.18\times10^{-5}$ & -- & $1.65\times10^{-4}$ & $0.04$\\
\hline
\end{tabular}
\end{table}

\subsection{Example 2: perturbed Kovasznay problem}
\label{sec:kov}
\subsubsection{Problem setup and long time behavior}
We consider the steady Kovasznay flow \citep{Kovasznay1948} on
$\Om=(-1,1)^2$,
\begin{align}\label{eq:Kov}
\bu_{Kov} &= \bigl(\,1-e^{\lambda x}\cos(2\pi y)\,,\;
\tfrac{\lambda}{2\pi}e^{\lambda x}\sin(2\pi y)\,\bigr),
&p_{Kov} &= \tfrac{1}{2}(1-e^{2\lambda x}),
\end{align}
with $\lambda = \tfrac{\Reyn}{2} - \sqrt{\tfrac{\Reyn^2}{4}+4\pi^2}$, as the base flow. Its graph with $\Reyn=10^4$ is shown in Fig.\,\ref{fig:eg2_Kovasanayflow}.
We add a small divergence-free perturbation
\begin{equation}\label{eq:bump}
\bw_0(x,y) = A_{\rm pert}\Bigl(-4y(1-x^2)^2(1-y^2),\;
4x(1-x^2)(1-y^2)^2\Bigr),
\end{equation}
with $A_{\rm pert}=10^{-2}$, as the initial perturbation. We solve
\eqref{eq:NSE} with $\bu(0,\cdot)=\bu_{Kov}+\bw_0$ with the Dirichlet boundary condition $\bu=\bu_{Kov}$ on $\partial\Om$. We therefore subtract the steady solution and solve the evolution equation for the perturbation 
\begin{equation}
\bw=\bu-\bu_{Kov},
\end{equation}
which satisfies $\bw|_{\partial\Om}=0$.
\begin{figure}[h]
\centering
\includegraphics[width=5cm, height=4.5cm]{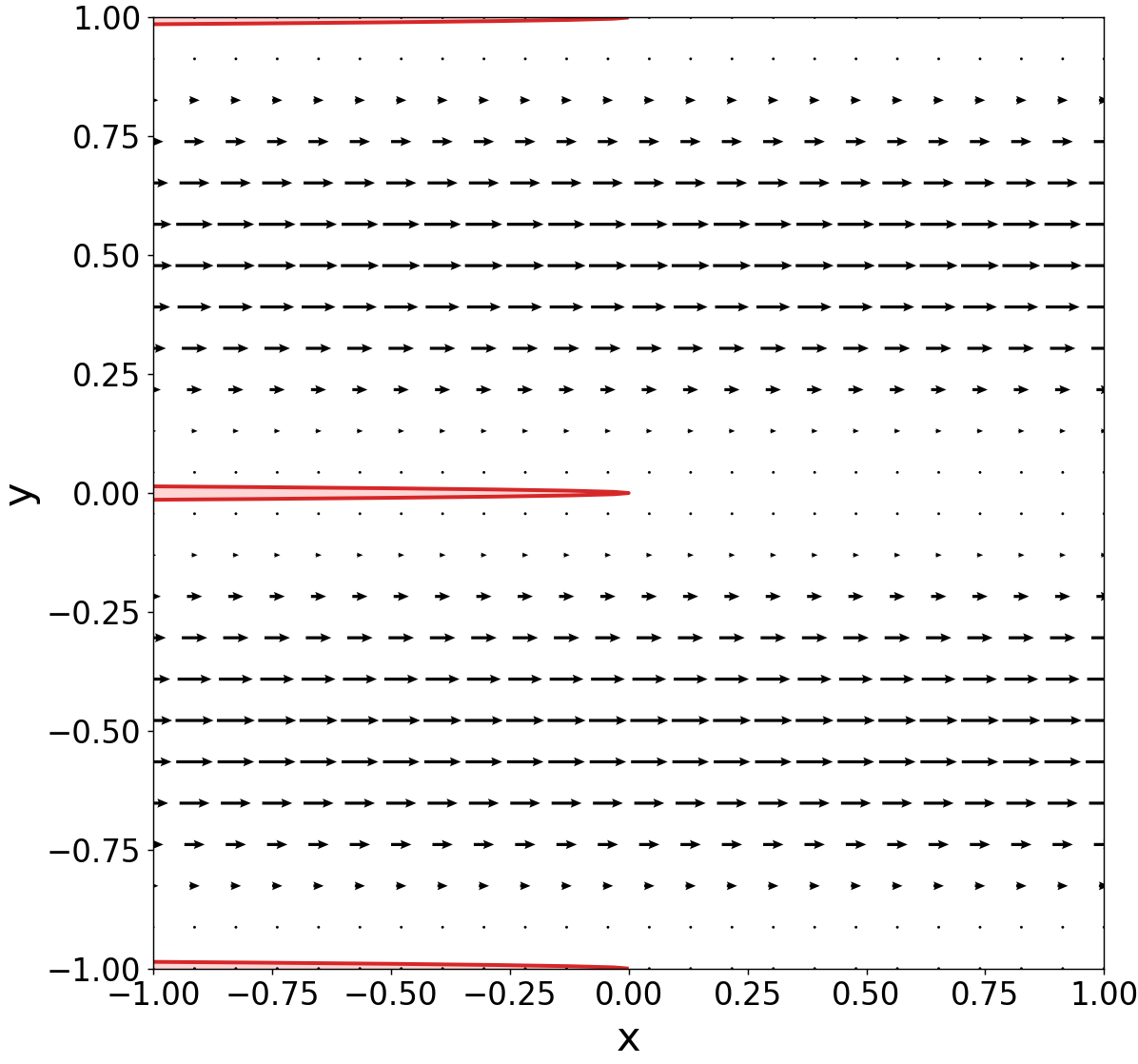}
\caption{Example 2. Kovasznay flow $\bu_{Kov}$ at $\Reyn=10^4$. The red region is where the first component $u_{{Kov},1}$ is negative.}
\label{fig:eg2_Kovasanayflow}
\end{figure}

The spatial discretization is the Legendre--Galerkin method
\citep{Shen1994} with $N=128$ polynomial degree per direction in all
production runs. The time step is $\dt=10^{-4}$ throughout. SVV
parameters are $C_{\rm svv}=1$, $m_N=\lceil\sqrt M\rceil=12$,  unless otherwise specified.

We test the consistent splitting scheme for $\Reyn=10^2, 10^3, 10^4$, using $k=2$ and $\dt=10^{-4}$, both with and without SVV. For $N=128$ and $\Reyn=10^3$, we also test $k=2,3,4$ with $\dt=10^{-4}$ and $10^{-5}$. The results are nearly identical to those obtained with $k=2$ and $\dt=10^{-4}$, indicating that the spatial discretization error dominates in this setting. Therefore, below we report results for different spatial resolutions while fixing $k=2$ and $\dt=10^{-4}$.

Figure~\ref{fig:eg2_timehistory}[a] shows $\norm{\bw(t)}$ at $\Reyn=10^2,10^3,10^4$ at $N=128$, with and without SVV. The SVV results indicate that the perturbation decays exponentially and the long-time solution returns to the Kovasznay flow. These curves are insensitive to the resolution: the SVV results for $N=128,256,512,1024$ coincide at each Reynolds number, as shown for $\Reyn=10^4$ in Figure~\ref{fig:eg2_timehistory}[b].
At $\Reyn=10^2$ the two schemes agree to all reported digits, so no stabilization is needed. At $\Reyn=10^3$ the scheme without SVV reaches a minimum near $t\approx 40$ and then grows to $1.2\times10^{-1}$ at $t=1000$, and at $\Reyn=10^4$ it blows up near $t\approx 10$. SVV thus improves stability and accuracy together, removing the spurious growth and the blow-up and driving the solution to the steady Kovasznay flow.
\begin{figure}[h]
\centering
\includegraphics[width=5cm, height=4.5cm]{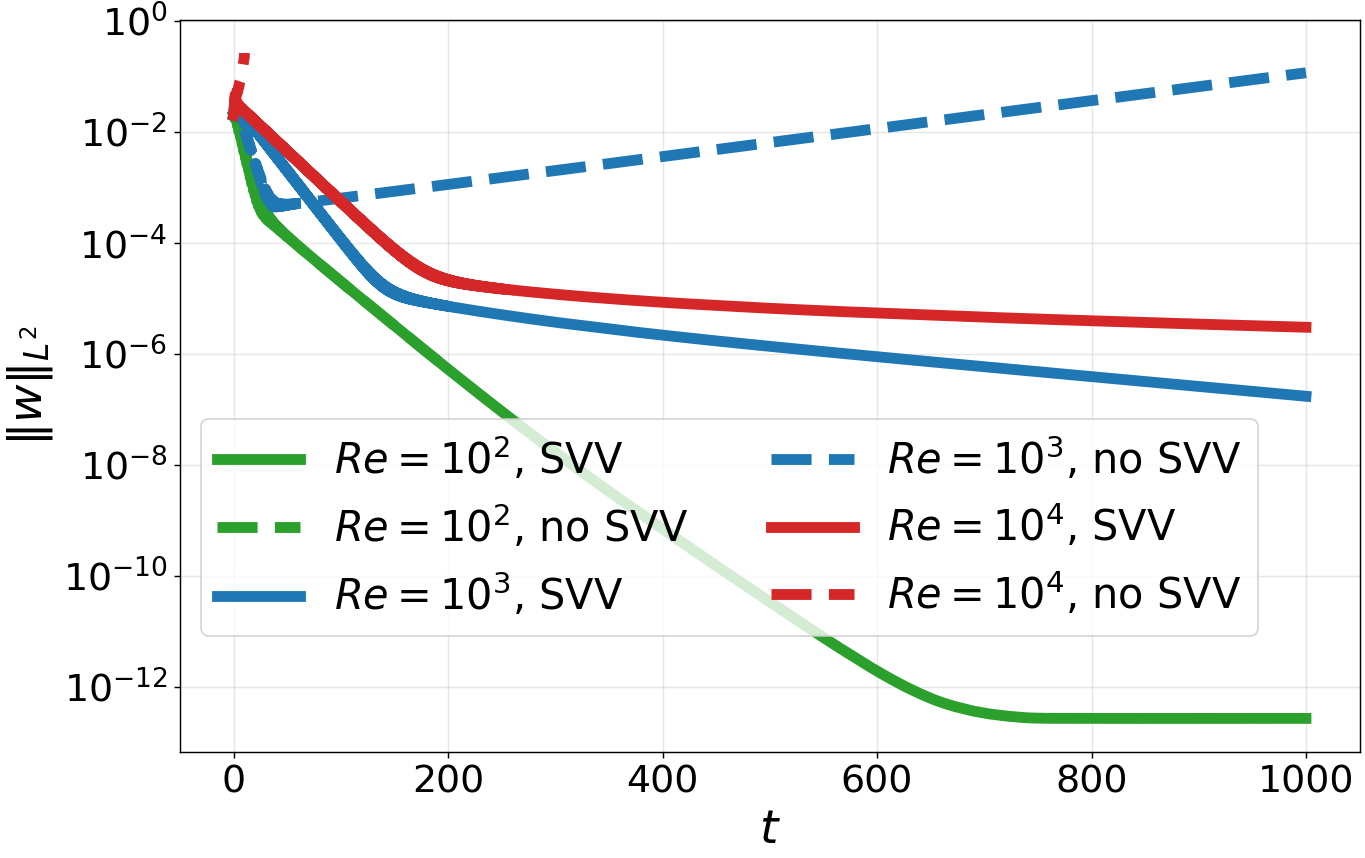}[a]
\includegraphics[width=5cm, height=4.5cm]{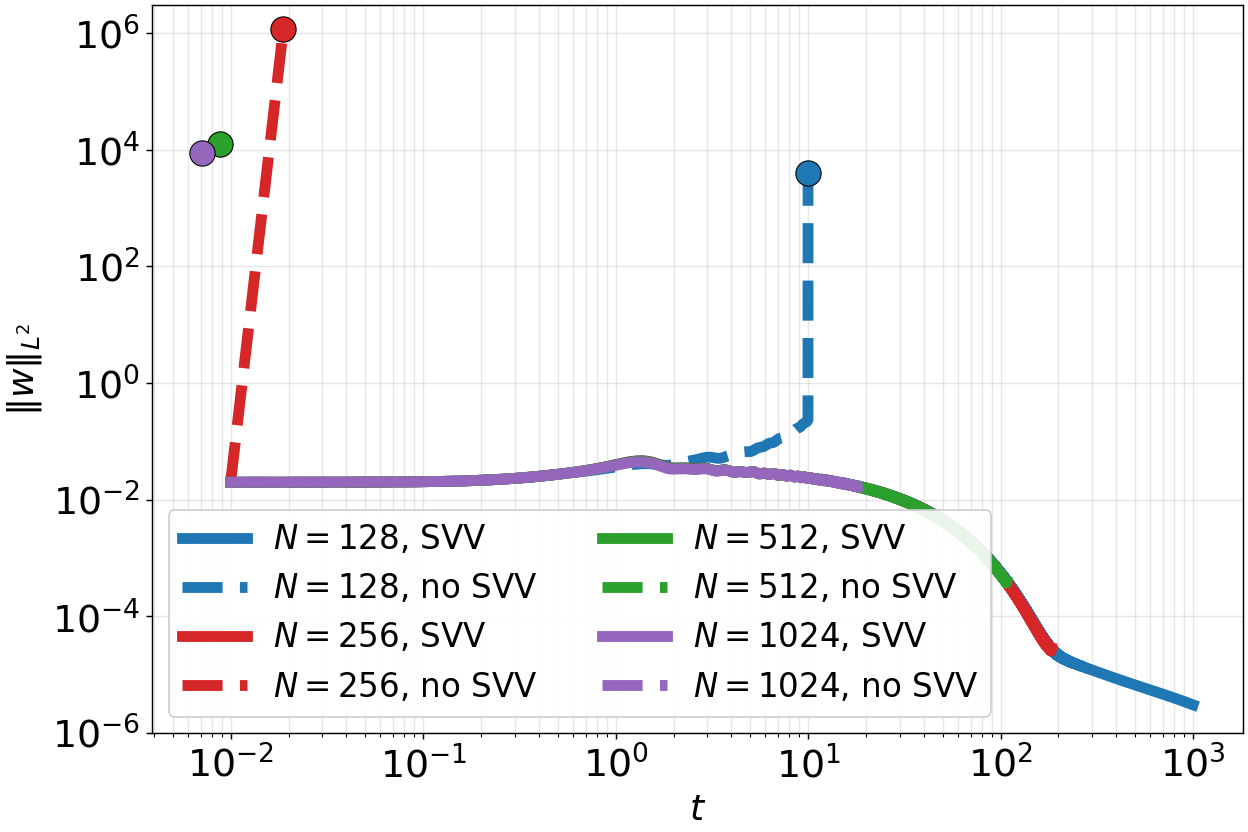}[b]
\caption{Example 2. [a]:  $\norm{\bw(t)}_{L^2(\Om)}$ vs time for $\Reyn=10^2, 10^3, 10^4$. The curves for $Re=10^2$ with SVV and without SVV coincide. 
[b]: $\norm{\bw(t)}_{L^2(\Om)}$ vs time for $\Reyn=10^4$ with different mesh resolutions. The solid curves (SVV results) coincide. The dots are where the scheme blows up.}
\label{fig:eg2_timehistory}
\end{figure}

\subsubsection{Boundary layer}

The main difficulty in this problem is the boundary layer at the outflow wall $x=1$ and the oscillations associated with it.  The total velocity satisfies $\bu=\bu_{Kov}$ on $\partial\Om$, and because $u_{Kov,1}(1,y)>0$ for every $y$ (see Fig.\,\ref{fig:eg2_Kovasanayflow}),  the fluid at $x=1$ flows out of the domain. The perturbation satisfies $\bw=\bm 0$ on $\partial\Om$. It is therefore carried toward $x=1$ by the base flow and forced to vanish there, and a boundary layer forms. Figure~\ref{fig:vorticityfulldomain} shows the perturbation vorticity $\nabla\times\bw$ over the full domain, with the region of interest located near the right wall. Figure~\ref{fig:vorticitysmallregion} provides a magnified view of this region at different mesh resolutions.
\begin{figure}[H]
\centering
\includegraphics[width=10cm, height=4.5cm]{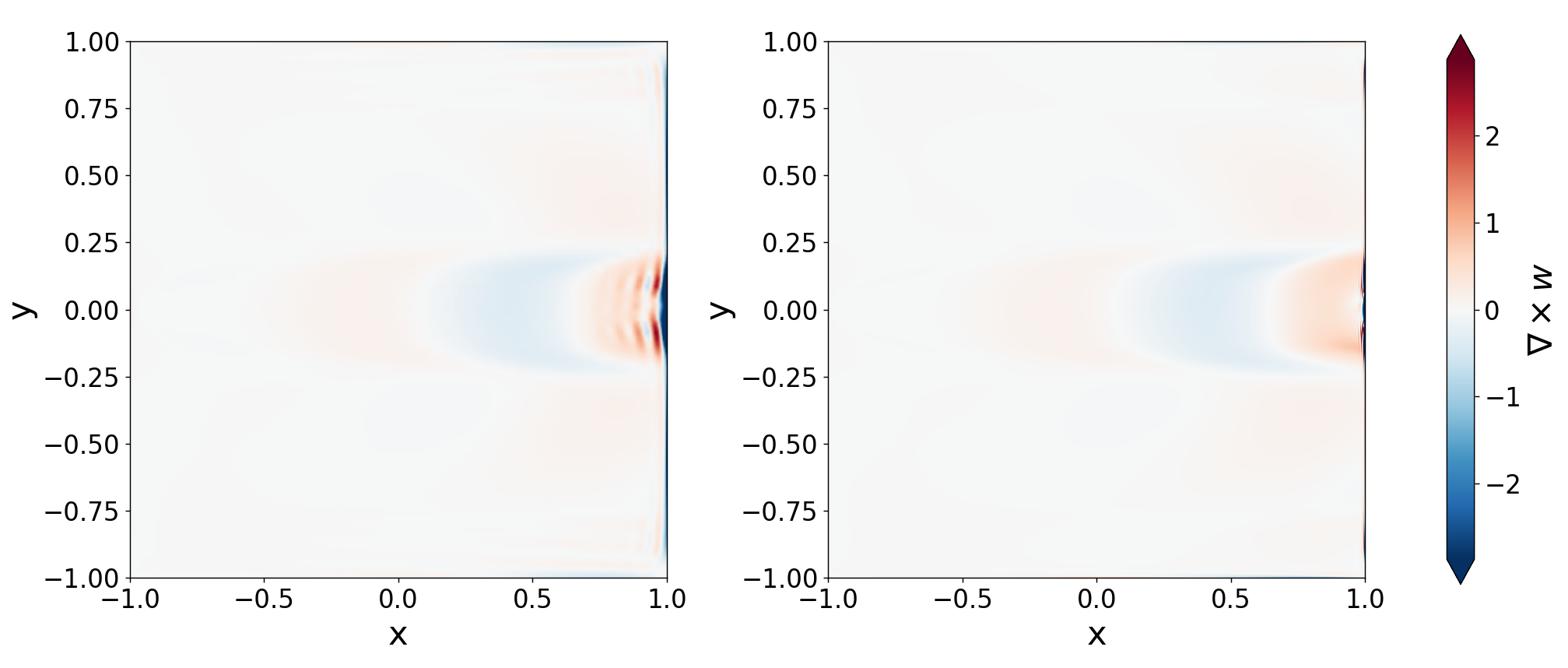}
\caption{Example 2 perturbation vorticity fields $\nabla\times {\bw}$ at $t=10$ for $\Reyn=10^4$, $N=128$ (left) and $N=1024$ (right).}
\label{fig:vorticityfulldomain}
\end{figure}

\begin{figure}[H]
\centering
\includegraphics[width=12cm, height=4cm]{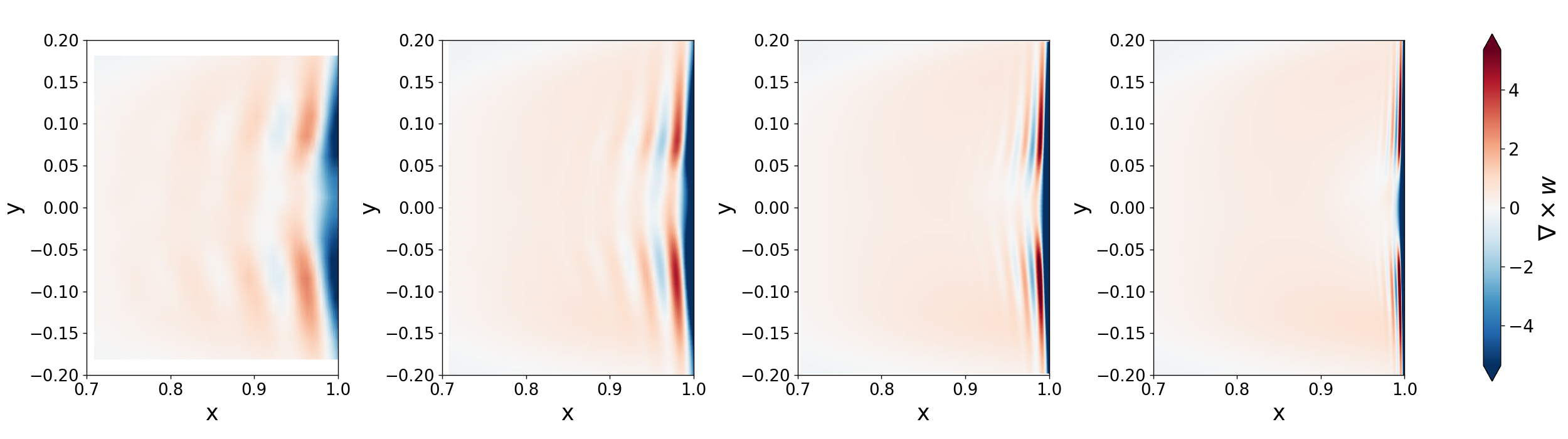}
\caption{Example 2 perturbation vorticity $\nabla\times {\bw}$ near right boundary for $\Reyn=10^4$. From left to right: $N=128, 256, 512, 1024$.}
\label{fig:vorticitysmallregion}
\end{figure}

Fig.\,\ref{fig:boundarylayer} shows the profiles of the perturbation quantities $\bw=(w_1,w_2)$, $\nabla\times\bw$ and $p-p_{Kov}$ along $y=0$ for $\Reyn=10^4$.
The boundary layer thickness, distance from the outflow wall to the peak of $w_2$ along $y=0$, converges to  $1.3\times10^{-2}$  under mesh refinement. This layer contains $7$, $13$, $27$ and $53$ Legendre--Gauss--Lobatto points for $N=128$, $256$, $512$ and $1024$, respectively.
\begin{figure}[H]
\centering
\includegraphics[width=14cm, height=6cm]{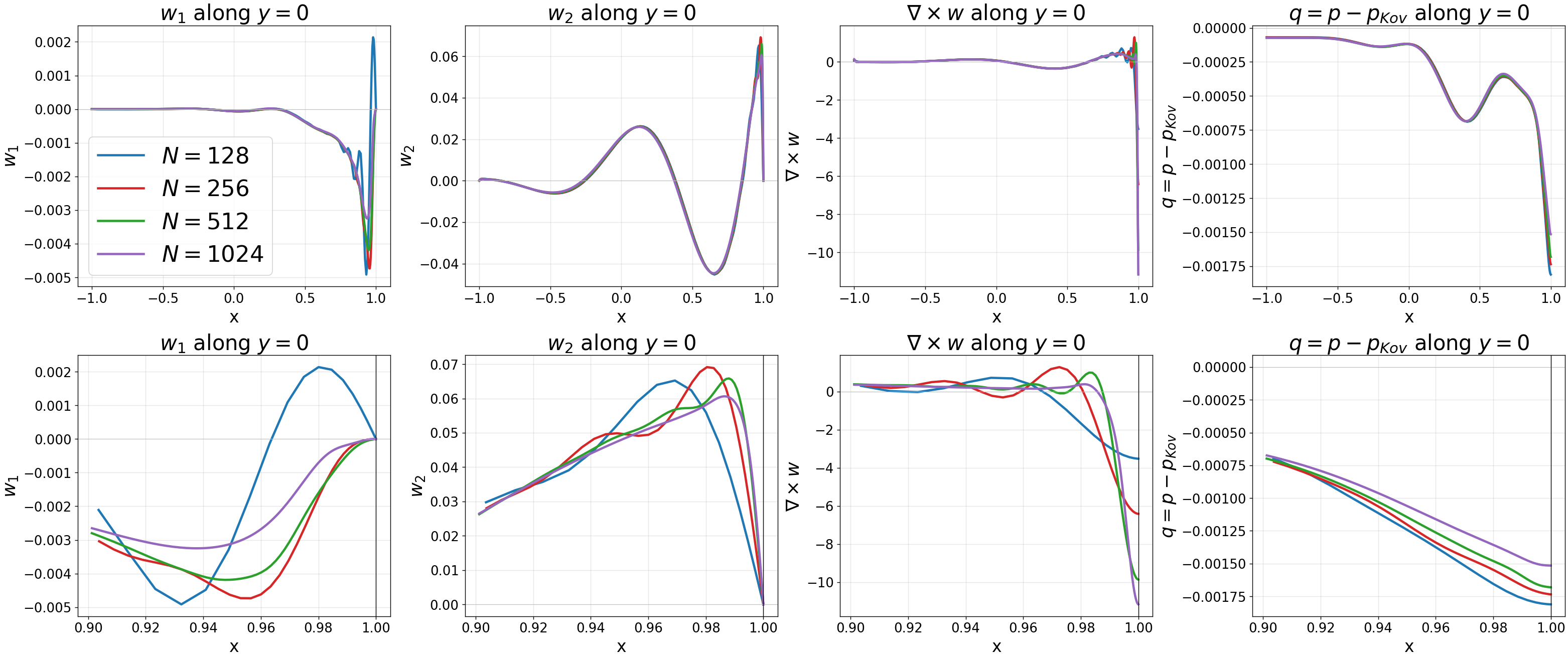}[a]
\caption{Example 2 boundary layer for $\Reyn=10^4$: the perturbation quantities, $w_1$, $w_2$, $\nabla\times {\bw}$, and $q=p-p_{Kov}$ along $y=0$ at $t=10$. First row: the interval $x\in [-1,1]$. Second row: the small interval $x\in[0.9,1]$. }
\label{fig:boundarylayer}
\end{figure}

Since no exact solution is available, we take the $\Reyn=10^4$,  $N=1024$ run as the reference and report the relative error
$\norm{\bw_N-\bw_{1024}}/\norm{\bw_{1024}}$
at $t=10$, both over the whole domain and over the outflow region $[0.8,1]\times[-0.2,0.2]$.  Table
\ref{tab:conv-space-Re} collects the results, which demonstrates the convergence of the scheme.
\begin{table}[H]
\small
\centering
\caption{Example 2. Spatial convergence of the perturbation velocity at $t=10$
for $\Reyn=10^4$, measured against the $N=1024$ run.}
\label{tab:conv-space-Re}
\begin{tabular}{c cc cc}
\hline
 & \multicolumn{2}{c}{whole domain} & \multicolumn{2}{c}{$[0.8,1]\times[-0.2,0.2]$}\\
\cline{2-3}\cline{4-5}
$N$ & relative error & order & relative error & order\\
\hline
$128$ & $1.45\times10^{-1}$ & --     & $3.35\times10^{-1}$ & --\\
$256$ & $9.44\times10^{-2}$ & $0.62$ & $2.12\times10^{-1}$ & $0.66$\\
$512$ & $5.17\times10^{-2}$ & $0.87$ & $1.12\times10^{-1}$ & $0.92$\\
\hline
\end{tabular}
\end{table}

We follow the layer in time and across Reynolds number. Along $y=0$ we
record the peak of $|w_2|$ near the outflow wall, its distance from the wall
$1-x_{\rm peak}$, the wall derivative $\partial_x w_2(1,0)$, and two ratios,
\begin{equation}\label{eq:layer-measures}
d_{\rm slope}=\frac{\max|w_2|}{\left|\partial_x w_2(1,0)\right|},
\qquad
R_{\nabla}=\frac{\left|\partial_x w_2(1,0)\right|}
                {\max_{|x|<0.8}\left|\partial_x w_2\right|}.
\end{equation}
Both are independent of the size of the perturbation, since numerator and
denominator carry it equally. Because $w_1$ vanishes along $x=1$, its
tangential derivative there is zero and $\partial_x w_2(1,0)$ is the boundary
vorticity. Fig.\,\ref{fig:layerevolution} shows these quantities for
$\Reyn=10^2$, $10^3$ and $10^4$ at $N=256$. The perturbation oscillates with a
period close to $0.73$ at all three Reynolds numbers and the wall derivative
changes sign on every cycle, so the two ratios are evaluated at the crest of
each cycle, where the layer is strongest.

The perturbation decays exponentially in every case and the decay slows as the
Reynolds number rises. After a transient ending near $t=10$ the three layer
measures settle onto constants and hold them while the amplitude falls by five
to seven decades, so the layer keeps its shape as the solution returns to the
Kovasznay flow and only its amplitude decreases. For $\Reyn=10^3$ and $10^4$ we follow the measures only up to $t\approx193$ and $t\approx235$, beyond which $\norm{\bw}_{L^2(\Om)}$ levels off while the layer amplitude keeps decaying and the peak-location measures no longer describe the outflow layer. The thickness orders cleanly
with the Reynolds number: taking medians over $t>20$, the peak of $w_2$ sits
$5.0\times10^{-2}$, $2.9\times10^{-2}$ and $1.9\times10^{-2}$ from the wall for
$\Reyn=10^2$, $10^3$ and $10^4$, and $d_{\rm slope}$ follows the same ordering
with $1.86$, $1.32$ and $1.07\times10^{-2}$. The ratio $R_\nabla$ stays between
$11$ and $21$ throughout, so the wall gradient exceeds the largest interior
gradient by more than an order of magnitude at every Reynolds number, but it is
not monotone in $\Reyn$ and we do not read an ordering from it. This resolution
is needed for the comparison: at $N=128$ the curves for $\Reyn=10^3$ and
$10^4$ coincide in all three layer measures, which reflects the grid rather
than the flow.

\begin{figure}[H]
\centering
\includegraphics[width=15cm]{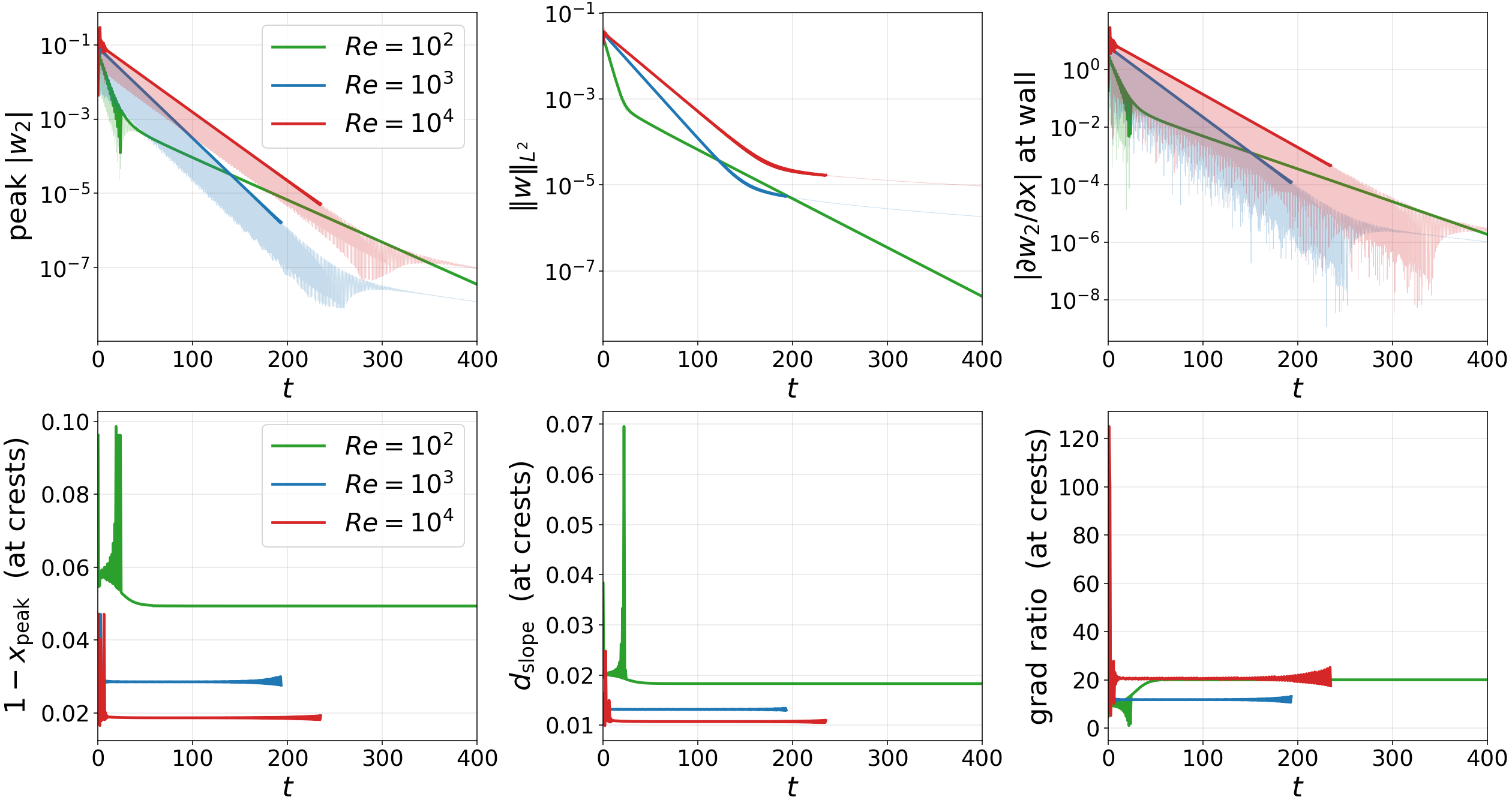}
\caption{Example 2. Outflow layer along $y=0$ at $N=256$ for $\Reyn=10^2$
(green), $10^3$ (blue) and $10^4$ (red). Upper row: peak of $|w_2|$, the
whole-domain norm $\norm{\bw}_{L^2(\Om)}$, and the wall derivative
$|\partial_x w_2(1,0)|$, on a logarithmic scale. Lower row: the distance
$1-x_{\rm peak}$ and the two ratios of \eqref{eq:layer-measures}. The faint
lines are the full recorded signal at every step. The bold lines trace the
crest of each oscillation cycle, and follow the signal itself where it no
longer oscillates.}
\label{fig:layerevolution}
\end{figure}

\subsubsection{Comparison with a FEM/Newton scheme}
We close this example by comparing the cost of reaching $T=1$ at $\Reyn=10^4$ with a
finite-element/Newton solver, analysed in \cite{GarciaArchillaJohnNovo2025} and applied
to this problem in \cite{AlhomsiWuZheng2026}, which uses Taylor--Hood $P_2/P_1$ elements
with grad-div stabilization, fully implicit BDF-2 in time, and a Newton solve at every
step, both codes run at $\dt=10^{-4}$. Table~\ref{tab:cost} reports the cost. The two
codes are not comparable at equal $N$, since $N$ counts polynomial degree for the spectral
scheme and mesh cells for the finite-element one, so we compare at a similar number of
unknowns. There the spectral scheme is about $42$ times cheaper near $0.6$ to $0.8$ million
degrees of freedom and about $29$ times cheaper near $2.4$ to $3.1$ million, taking $85$\,ms
per step against $1792$\,ms and using $0.37$\,GB against $54.5$\,GB. Equal degrees of
freedom is not equal error, so these figures measure the cost of carrying a given number of
unknowns rather than the cost of reaching a given accuracy.

\begin{table}[H]
\small
\centering
\caption{Example 2. Cost of integrating to $T=1$ at $\Reyn=10^4$ with
$\dt=10^{-4}$, for the present Legendre-Galerkin SVV scheme and a
Galerkin--Newton Taylor--Hood $P_2/P_1$ solver.  Memory is the total over
the job: the spectral runs are a single shared-memory process, and the
finite-element runs are the number of ranks times the largest resident set size
among them.}
\label{tab:cost}
\begin{tabular}{l c r r r r r r r}
\hline
method & $N$ & velocity dof & total dof & wall (s) & cores & core-h
       & ms/step & memory (GB)\\
\hline
spectral & $128$  & $32\,258$    & $48\,899$    & $46.1$   & $8$  & $0.10$  & $4.6$    & $0.11$\\
         & $256$  & $130\,050$   & $196\,099$   & $204.7$  & $8$  & $0.46$  & $20.5$   & $0.11$\\
         & $512$  & $522\,242$   & $785\,411$   & $853.8$  & $8$  & $1.90$  & $85.4$   & $0.37$\\
         & $1024$ & $2\,093\,058$& $3\,143\,683$& $6555.3$ & $8$  & $14.57$ & $655.5$  & $1.10$\\
\hline
Newton   & $128$  & $132\,098$   & $148\,739$   & $1856.5$ & $16$ & $8.25$  & $185.7$  & $17.0$\\
         & $256$  & $526\,338$   & $592\,387$   & $17917.1$& $16$ & $79.63$ & $1791.7$ & $54.5$\\
         & $512$  & $2\,101\,250$& $2\,364\,419$& $23945.8$& $64$ & $425.70$& $2394.6$ & $215.7$\\
\hline
\end{tabular}
\end{table}

To weigh cost against accuracy we compare the two solutions at the point where they are
hardest to compute. Figure~\ref{fig:specvsnewton} shows the perturbation vorticity near the
outflow wall and the profile of $w_2$ along $y=0$ at $t=1$, for the spectral scheme at
$N=1024$ and the finite-element solver at $N=512$, two runs of comparable size, $3.1$ and
$2.4$ million unknowns, costing $14.6$ and $425.7$ core-hours. Away from the wall the two
agree closely, the interior lobes of $w_2$ near $x=-0.1$ and $x=0.75$ differing by a few
percent, so both solvers capture the transported perturbation equally well.

The difference is confined to the layer. The spectral peak of $w_2$ reaches
$1.81\times10^{-1}$ against $1.06\times10^{-1}$ and the peak vorticity $62.9$ against
$25.2$, while both place the peak at nearly the same distance from the wall,
$1.33\times10^{-2}$ and $1.56\times10^{-2}$, so it is the height of the layer that is
missed, not its position. The reason is visible in the markers of the right panel. The
Legendre--Gauss--Lobatto nodes are spaced $7.0\times10^{-6}$ apart at the wall against
$3.9\times10^{-3}$ for the uniform lattice, a factor of $560$, so the finite-element cut
carries about four points across the layer where the spectral cut carries several dozen.
The vorticity fields show the same thing, the spectral solution resolving a train of
alternating bands upstream of the wall sheet that the finite-element solution does not
represent at all.

\begin{figure}[H]
\centering
\includegraphics[width=7cm]{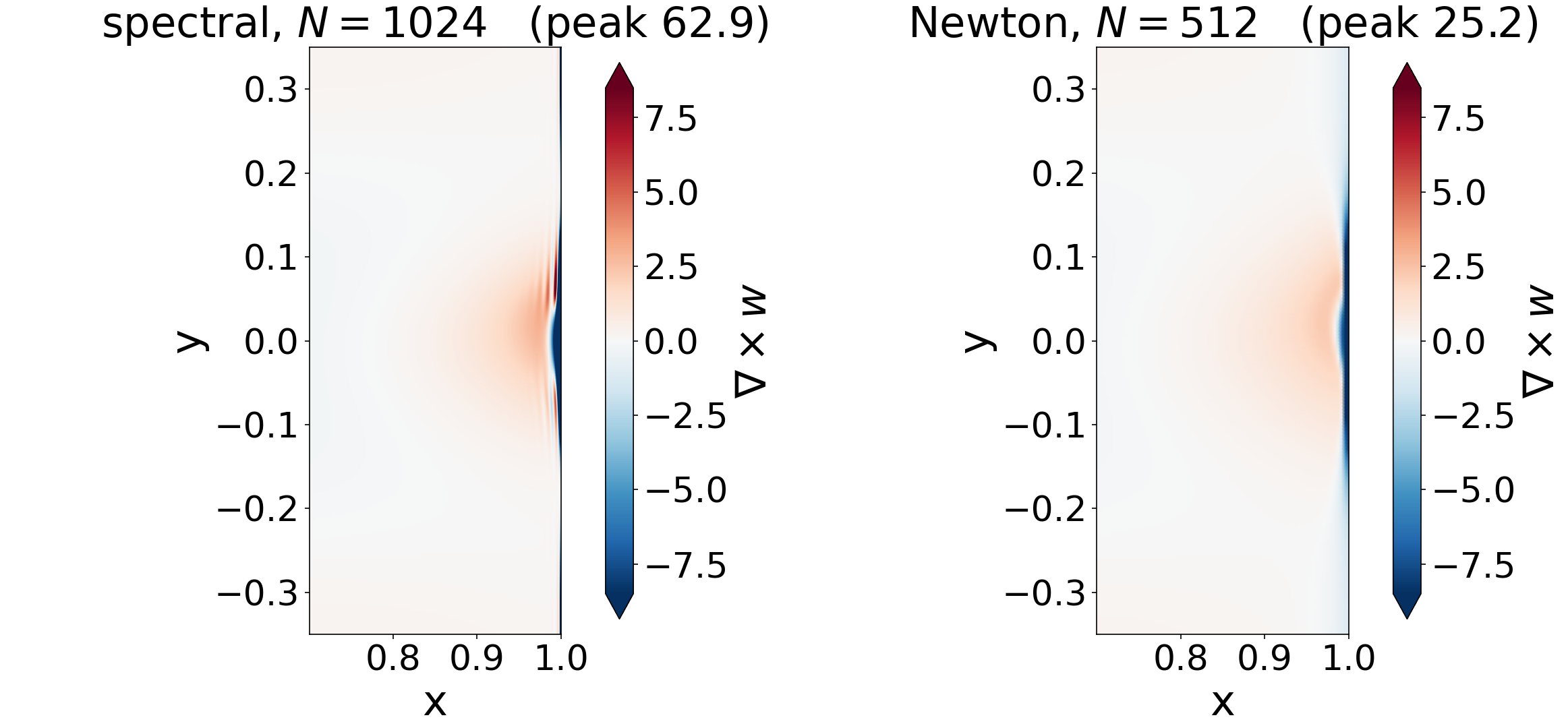}\quad
\includegraphics[width=7cm]{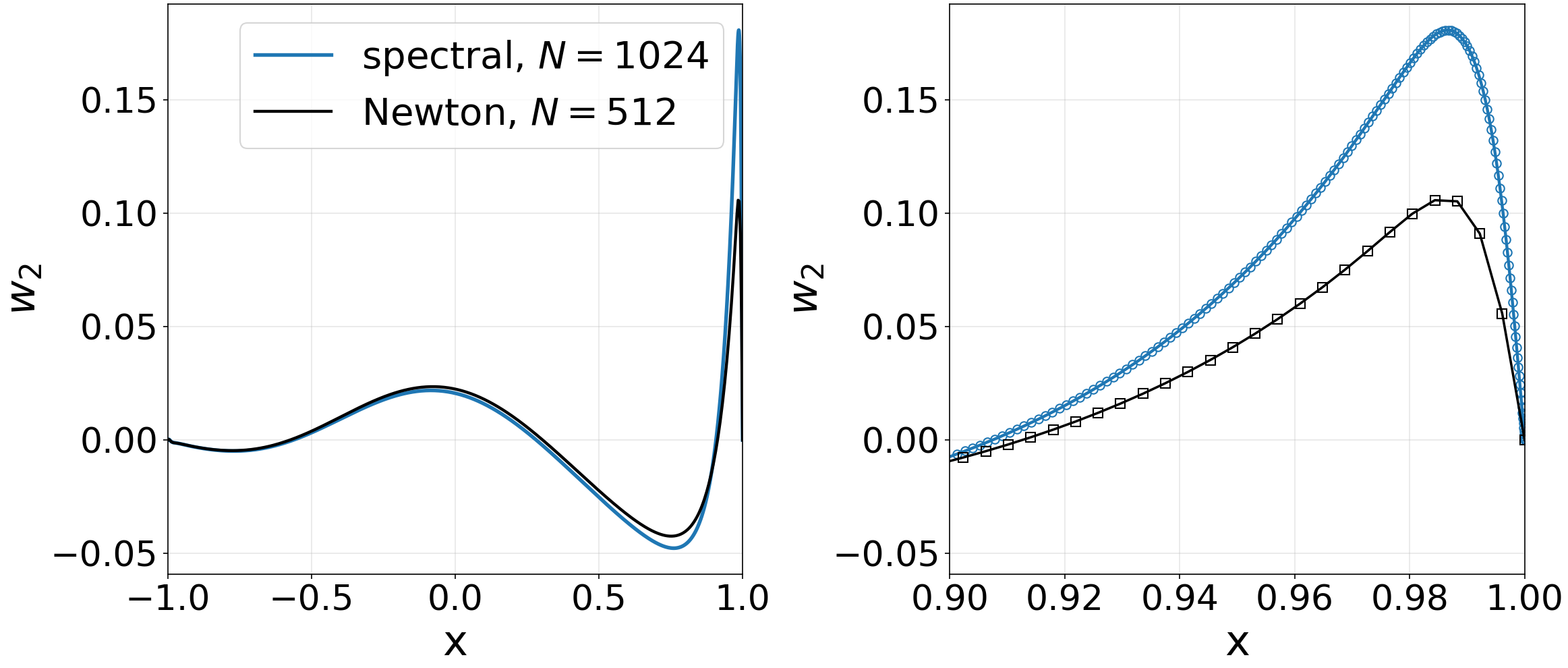}
\caption{Example 2 at $\Reyn=10^4$ and $t=1$, comparing the spectral scheme with
$N=1024$ against the Galerkin--Newton solver with $N=512$. The two left panels show
the perturbation vorticity $\nabla\times\bw$ near the outflow wall for the two methods. The two
right panels show $w_2$ along $y=0$ over the whole interval and in a zoom near the
outflow wall.}
\label{fig:specvsnewton}
\end{figure}

\subsection{Example 3: Kelvin--Helmholtz problem}
\label{sec:KH}
The two-dimensional Kelvin--Helmholtz instability is a classical shear-flow problem for incompressible solvers at high Reynolds number. We adopt the specific configuration in \cite{Schroeder2019KH,OlshanskiiRebholz2020}. On the unit square
$\Om=(0,1)^2$  the velocity $\bu=(u_1,u_2)$ and pressure $p$ solve the incompressible Navier--Stokes equations
\eqref{eq:NSE-intro}. The boundary conditions are periodic in $x$ and free-slip at the walls
$y=0,1$, the latter imposed as $\bu\cdot\bm n=0$ and
$\bigl(-\nu\,\nabla\bu\cdot\bm n\bigr)\times\bm n=\bm 0$. The free-slip condition is equivalent to 
\begin{equation}
\label{KH_bc}
u_2=0 \quad\text{and}\quad \partial_y u_1=0 \qquad\text{on } y\in\{0,1\}.
\end{equation}
The pressure is also periodic in $x$ direction and satisfies 
$\partial_y p = 0$  on $y\in\{0,1\}$ by using the relation $\frac{\partial p}{\partial n}=-\bm n\cdot ((\bu\!\cdot\!\nabla)\bu+\nu\,\nabla\times\nabla\times\bu)$ and \eqref{KH_bc}.
The spatial discretization of this problem is described in Appendix\,\ref{app:fourier-trig}.

The initial condition is defined by
\begin{equation}
u^0_1=u_\infty\tanh\!\left(\dfrac{2y-1}{\delta_0}\right)+c_n\,\partial_y\psi(x,y),\qquad
u^0_2=-c_n\,\partial_x\psi(x,y).
\end{equation}
where $\delta_0=\tfrac{1}{28}$ is the initial vorticity thickness, $u_\infty=1$
is a reference velocity, $c_n=10^{-3}$ is a noise/scaling factor, and
$
\psi(x,y)=u_\infty\bigl(\cos(8\pi x)+\cos(20\pi x)\bigr) \,\exp(-(y-0.5)^2/\delta_0^2)
$.

The Reynolds number is defined by
$Re=\delta_0\,u_\infty/\nu=1/(28\nu)$, and $\nu$ is determined by selecting $Re$. We take $\Reyn=10^3$ and $10^4$. As documented in \cite{Schroeder2019KH,OlshanskiiRebholz2020}, the ensuing roll-up of the layer into vortices and their successive pairing is strongly sensitive to small perturbations, which makes the problem a demanding test of the numerical stability.

Figure~\ref{fig:KH_vort_Re_compare} shows the early evolution of the vorticity field $\nabla\times\bu$ for the SVV-stabilized scheme with $k=4$ and $N=512$, comparing $\Reyn=10^3$ (top row) with $\Reyn=10^4$ (bottom row) on the common colour scale of \cite{Schroeder2019KH}. At $t=0$ the two rows are identical, since the initial shear layer is independent of $\Reyn$. As the layer rolls up into the two primary vortices, the two Reynolds numbers separate: at $\Reyn=10^4$ the braids connecting the cores stay thin and sharply defined and the vortices retain a large vorticity magnitude ($\min\nabla\times\bu\approx-57$ at $t=7$), whereas at $\Reyn=10^3$ the stronger viscous diffusion smears the braids and weakens the cores ($\min\nabla\times\bu\approx-44$ at $t=7$). At both Reynolds numbers the SVV-stabilized run is free of the spurious high-wavenumber oscillations that render the bare scheme unstable at this resolution, and the roll-up proceeds cleanly, consistent with the reference computations of \cite{Schroeder2019KH,OlshanskiiRebholz2020}.

\begin{figure}[H]
\centering
\includegraphics[width=\textwidth]{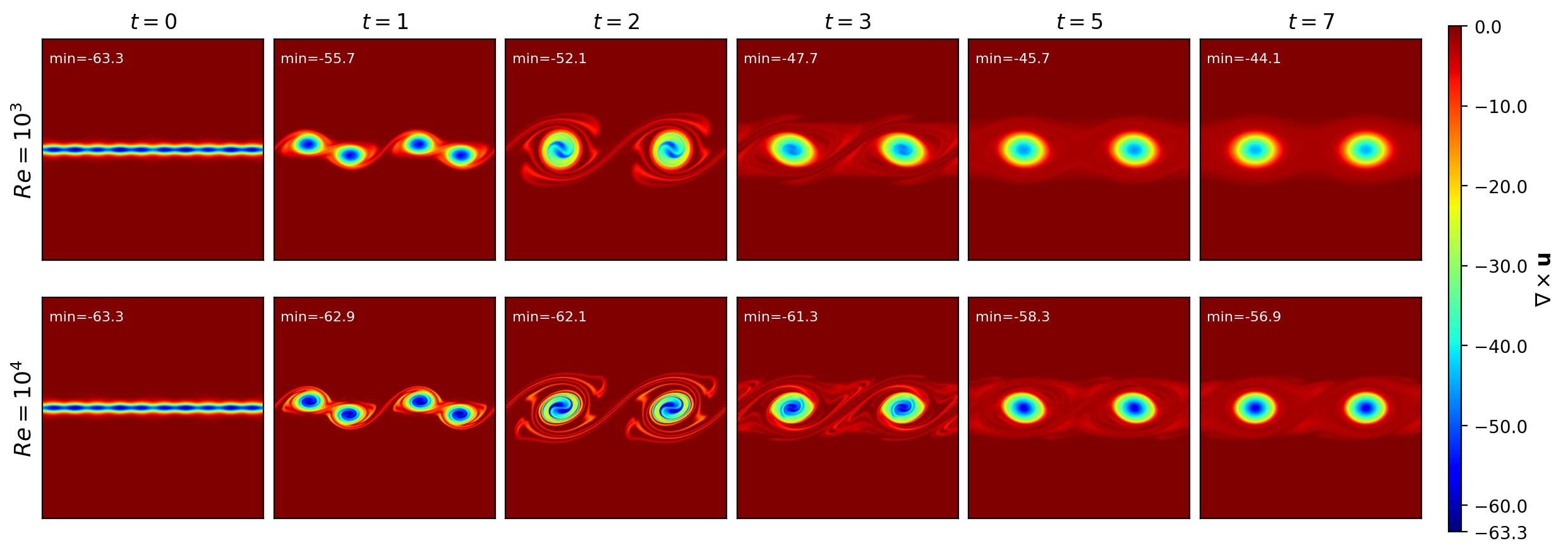}
\caption{Kelvin--Helmholtz problem: vorticity $\nabla\times\bu$ for the SVV-stabilized scheme with $k=4$ and $N=512$ at $t=0,1,2,3,5,7$ (left to right). Top row: $\Reyn=10^3$; bottom row: $\Reyn=10^4$. Both rows share the colour scale of \cite{Schroeder2019KH}, $\nabla\times\bu\in[-63.3,\,0]$; the value printed in each panel is the minimum vorticity.}
\label{fig:KH_vort_Re_compare}
\end{figure}

To expose what the SVV term suppresses, Figure~\ref{fig:KH_N128_svv} shows the vorticity at the coarsest resolution $N=128$ and $t=2$, well within the reliable regime $t\le 7$. The three left panels of each row use the SVV-stabilized scheme with $k=2,3,4$ and are almost indistinguishable from one another, and from the $N=256$ and $N=512$ SVV results, so that with SVV even the coarse grid already captures the correct roll-up at both Reynolds numbers. The three right panels repeat the computation with SVV switched off. At $\Reyn=10^3$ small spurious oscillations appear around the vortices, while at $\Reyn=10^4$ they contaminate the entire field: the bare scheme injects grid-scale noise whose amplitude ($\min\nabla\times\bu\approx-106$, far below the physical range) overwhelms the true vorticity. The SVV term removes exactly this under-resolved, high-wavenumber content while leaving the resolved vortices intact.

\begin{figure}[H]
\centering
\includegraphics[width=\textwidth]{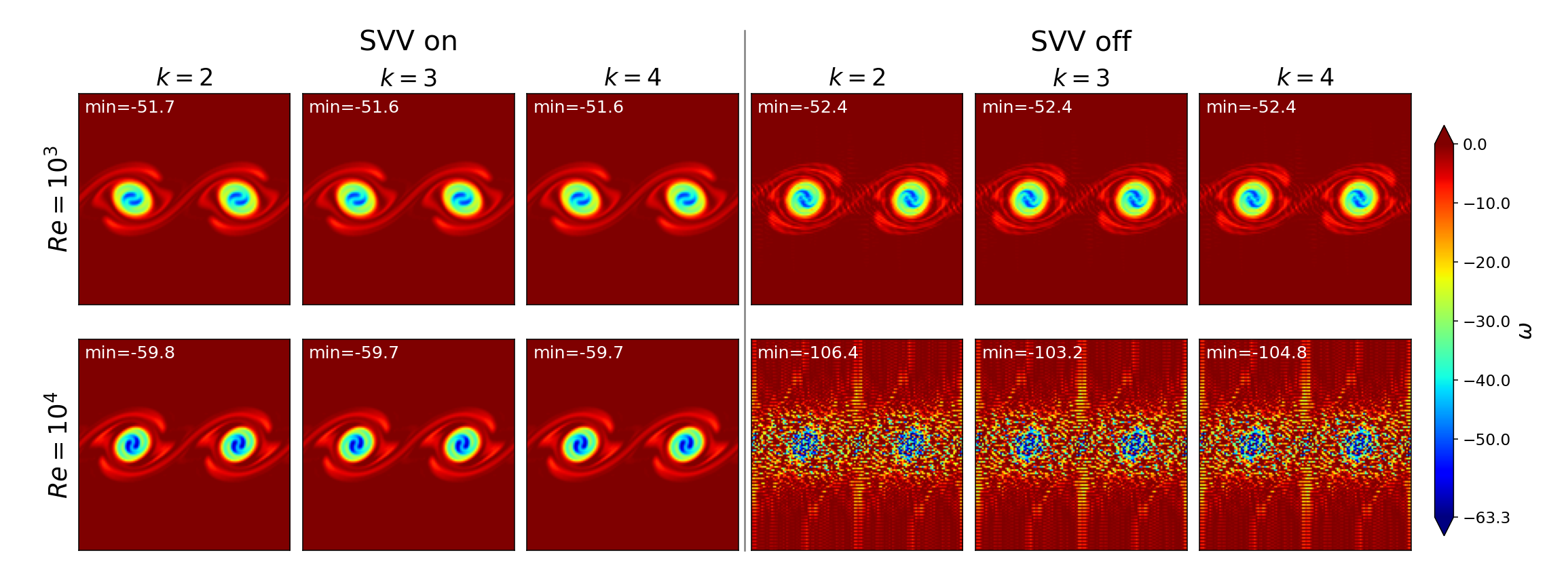}
\caption{Kelvin--Helmholtz problem at the coarse resolution $N=128$ and $t=2$ (within the reliable regime): vorticity $\nabla\times\bu$ for $\Reyn=10^3$ (top row) and $\Reyn=10^4$ (bottom row). In each row the left three panels use SVV with $k=2,3,4$ and the right three switch SVV off. With SVV the $k=2,3,4$ results coincide; without SVV spurious high-wavenumber oscillations appear, severely so at $\Reyn=10^4$. Common colour scale of \cite{Schroeder2019KH}, $\nabla\times\bu\in[-63.3,\,0]$ (values below $-63.3$ are clipped to the end colour); the number in each panel is the minimum vorticity.}
\label{fig:KH_N128_svv}
\end{figure}

This behaviour is consistent across the parameters we tested. With SVV active the coarse ($N=128$), intermediate ($N=256$) and fine ($N=512$) fields agree to plotting accuracy throughout the reliable regime and for every order $k=2,3,4$, so the stabilized scheme delivers a resolution- and order-robust solution rather than one that must be chased with mesh refinement. Without SVV the picture is the opposite: the bare scheme is only marginally usable at $\Reyn=10^3$ and is already polluted by grid-scale oscillations at $\Reyn=10^4$, with the contamination growing as the Reynolds number increases and the physical scales become finer. The spectral vanishing viscosity therefore acts precisely where it is needed, on the unresolved high modes, and is what makes the higher-order consistent splitting scheme dependable in the demanding high-Reynolds-number regime that motivates this study.

Figure~\ref{fig:KH_grid_N512} compares the two Reynolds numbers at the fixed resolution $N=512$, using the SVV-stabilized scheme with $k=2,3,4$, at the two later times $t=12$ and $t=20$. By this stage the shear layer has rolled up into a pair of co-rotating primary vortices which, as described in \cite{Schroeder2019KH}, subsequently pair into ever larger structures until a single vortex remains, an inverse transfer of energy from small to large scales that is characteristic of two-dimensional flow and whose final pairing time is extremely sensitive to perturbations. This sensitivity is now visible in the computed fields: at these late times the three temporal orders no longer coincide. At $\Reyn=10^3$ (top row) the $k=2$ field is markedly different from the $k=3$ and $k=4$ fields, which remain close to one another; this pattern holds at both $t=12$ and $t=20$, where the $k=3$ and $k=4$ vortices nearly coincide while the $k=2$ vortex is displaced. At $\Reyn=10^4$ (bottom row) all three orders differ at $t=12$, and although by $t=20$ each has collapsed onto a single large vortex, its location differs from one order to the next. This divergence is not a failure of the scheme but the expected signature of the chaotic late-time pairing anticipated above: the infinitesimal differences in temporal truncation error between the orders are amplified by the flow's extreme sensitivity, so that the phase and timing of the pairing---rather than its qualitative character---vary with $k$. The two Reynolds numbers also differ markedly in texture: at $\Reyn=10^3$ the strong viscous diffusion has erased the thin braids and left smooth, nearly circular vortices, whereas at $\Reyn=10^4$ the cores stay compact and intense, the connecting braids remain thin and sheet-like, and fine secondary filaments survive. What is robust across $k$ is therefore stability rather than the detailed late-time field: at every order the SVV-stabilized run stays well resolved and free of grid-scale oscillations throughout, whereas the corresponding runs without SVV blow up at these resolutions, consistent with the instability of the bare scheme documented above.

\begin{figure}[H]
\centering
\includegraphics[width=\textwidth]{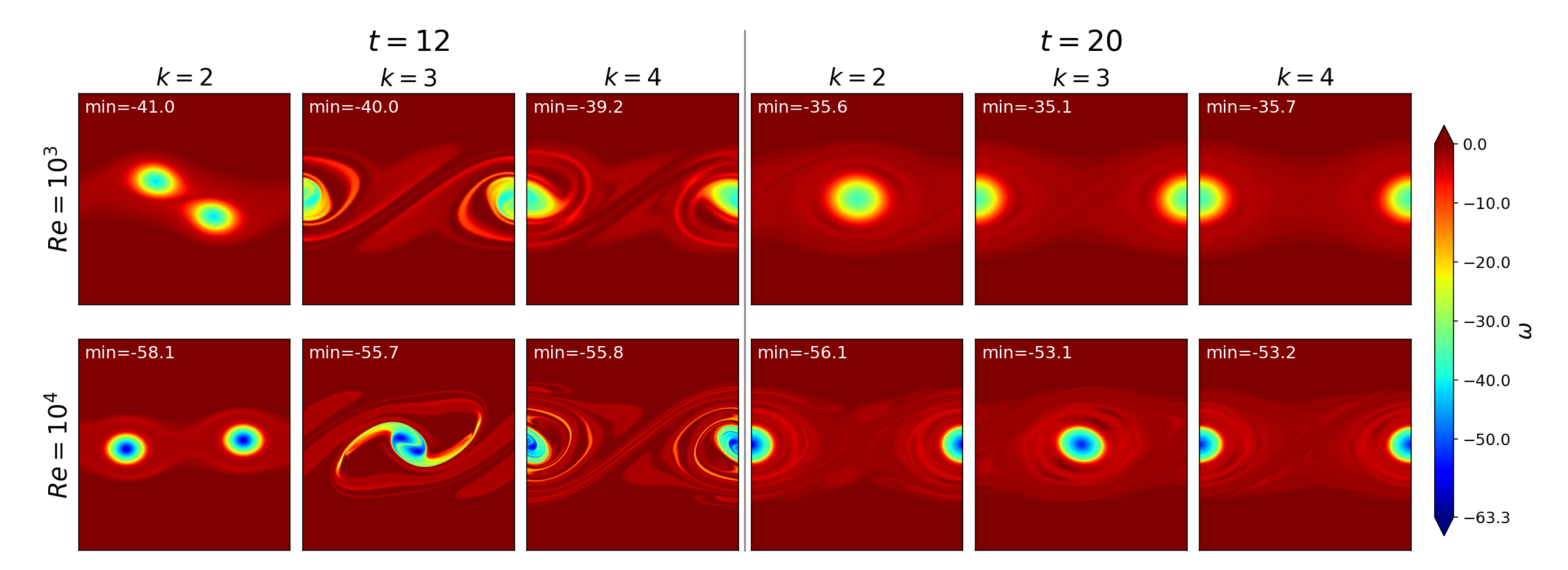}
\caption{Kelvin--Helmholtz problem at $N=512$ with the SVV-stabilized scheme: vorticity $\nabla\times\bu$ for $\Reyn=10^3$ (top row) and $\Reyn=10^4$ (bottom row). In each row the left three panels are $t=12$ and the right three are $t=20$, with $k=2,3,4$ within each group. Common colour scale of \cite{Schroeder2019KH}, $\nabla\times\bu\in[-63.3,\,0]$; the number in each panel is the minimum vorticity.}
\label{fig:KH_grid_N512}
\end{figure}

Finally, we validate the stabilized scheme against the reference solutions of
\cite{Schroeder2019KH}, whose integral time series are available for $\Reyn=10^3$
and $\Reyn=10^4$. We monitor four integral diagnostics. Writing $\mathbf{u}=(u_1,u_2)$
for the velocity, $\omega=\nabla\times\mathbf{u}=\partial_x u_2-\partial_y u_1$ for the
scalar vorticity, and $\langle\,\cdot\,\rangle$ for the average over the periodic
$x$-direction, these are the kinetic energy ($K(t)$), enstrophy ($E(t)$), palinstrophy ($P(t)$), and vorticity
thickness ($\delta_\omega(t)$),
\begin{equation}\label{eq:KH-diagnostics}
\begin{aligned}
K(t)&=\tfrac12\int_\Omega |\mathbf{u}|^2\,d\mathbf{x},
&\qquad
E(t)&=\tfrac12\int_\Omega \omega^2\,d\mathbf{x},\\[4pt]
P(t)&=\tfrac12\int_\Omega |\nabla\omega|^2\,d\mathbf{x},
&\qquad
\delta_\omega(t)&=\frac{2u_\infty}{\displaystyle\max_{y}\bigl|\partial_y\langle u_1\rangle(y,t)\bigr|},
\end{aligned}
\end{equation}
with $\delta_\omega(0)=\delta_0=1/28$. Here $K$ and $E$ measure the total flow energy
and the mean-square vorticity, $P$ measures the mean-square vorticity gradient and
hence the degree of filamentation, and $\delta_\omega$ is the shear-layer (vorticity)
thickness built from the $x$-averaged streamwise profile. Figure~\ref{fig:KH_diagnostics}
overlays the reference of \cite{Schroeder2019KH} (black; solid for $\Reyn=10^3$, dashed
for $\Reyn=10^4$) on these four quantities computed at $N=512$ for $k=2,3,4$, with and
without SVV. The reference data extend to the scaled time $\bar t=400$, i.e.
$t=\bar t\,\delta_0/u_\infty=14.3$, marked by the vertical line. Through the reliable
regime ($t\le 7$) the SVV results are indistinguishable from the reference for every order $k$ and
both Reynolds numbers, most strikingly in the kinetic energy, which the SVV curves track
to plotting accuracy. Beyond the roll-up the curves separate only in the \emph{timing} of the final vortex
pairing. The palinstrophy peak, the enstrophy step and the thickness jump occur at
slightly different times, a manifestation of the extreme sensitivity of that pairing
documented in \cite{Schroeder2019KH}. The kinetic energy, being insensitive to the
pairing, continues to agree throughout. The bare scheme, by contrast, departs from the
reference and, for the under-resolved configurations, blows up. This shows as the
enstrophy and palinstrophy excursions of the ``no SVV'' curves, severe at $\Reyn=10^4$,
where the $k=2$ run diverges near $t\approx4$.

\begin{figure}[H]
\centering
\includegraphics[width=0.49\textwidth]{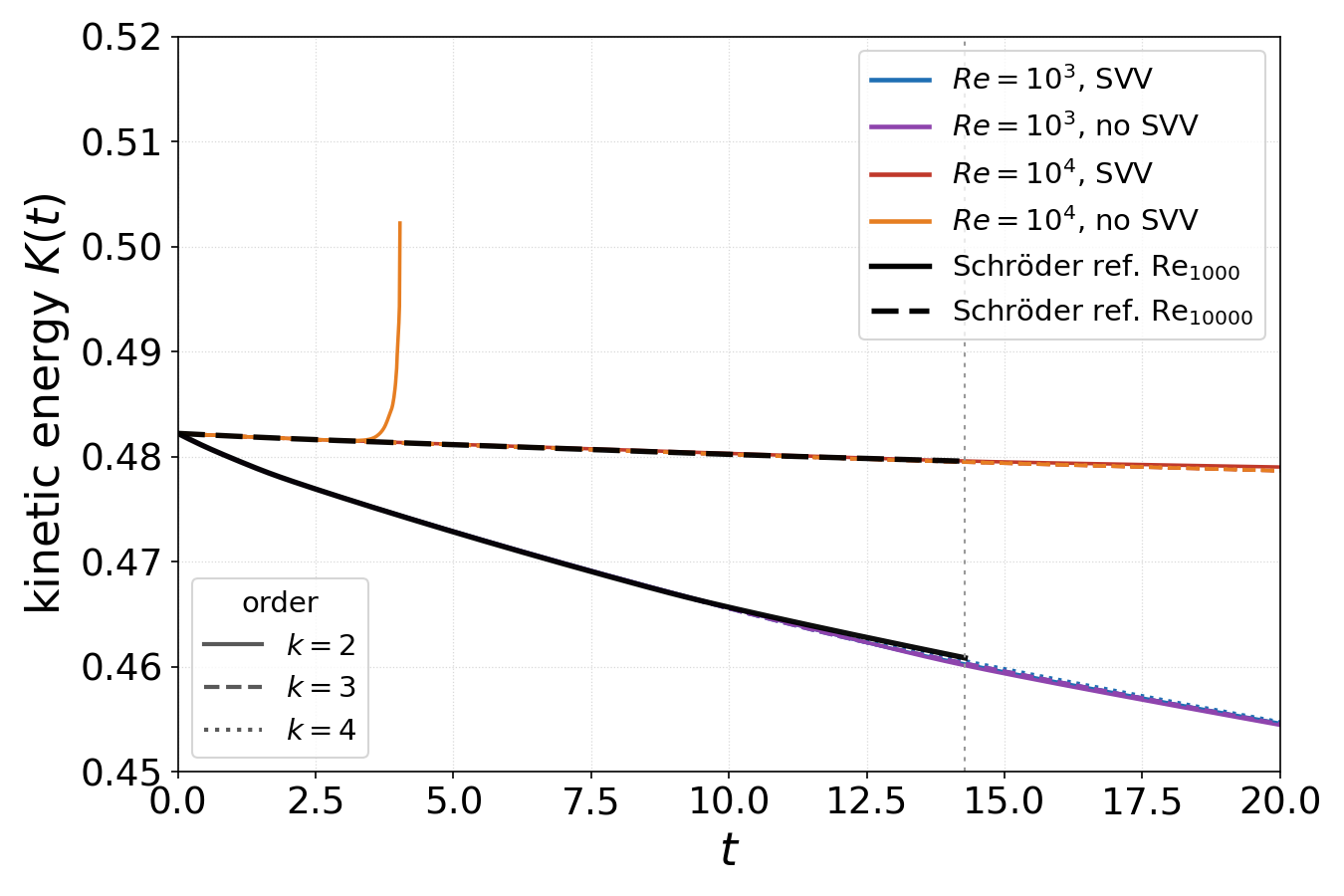}
\includegraphics[width=0.49\textwidth]{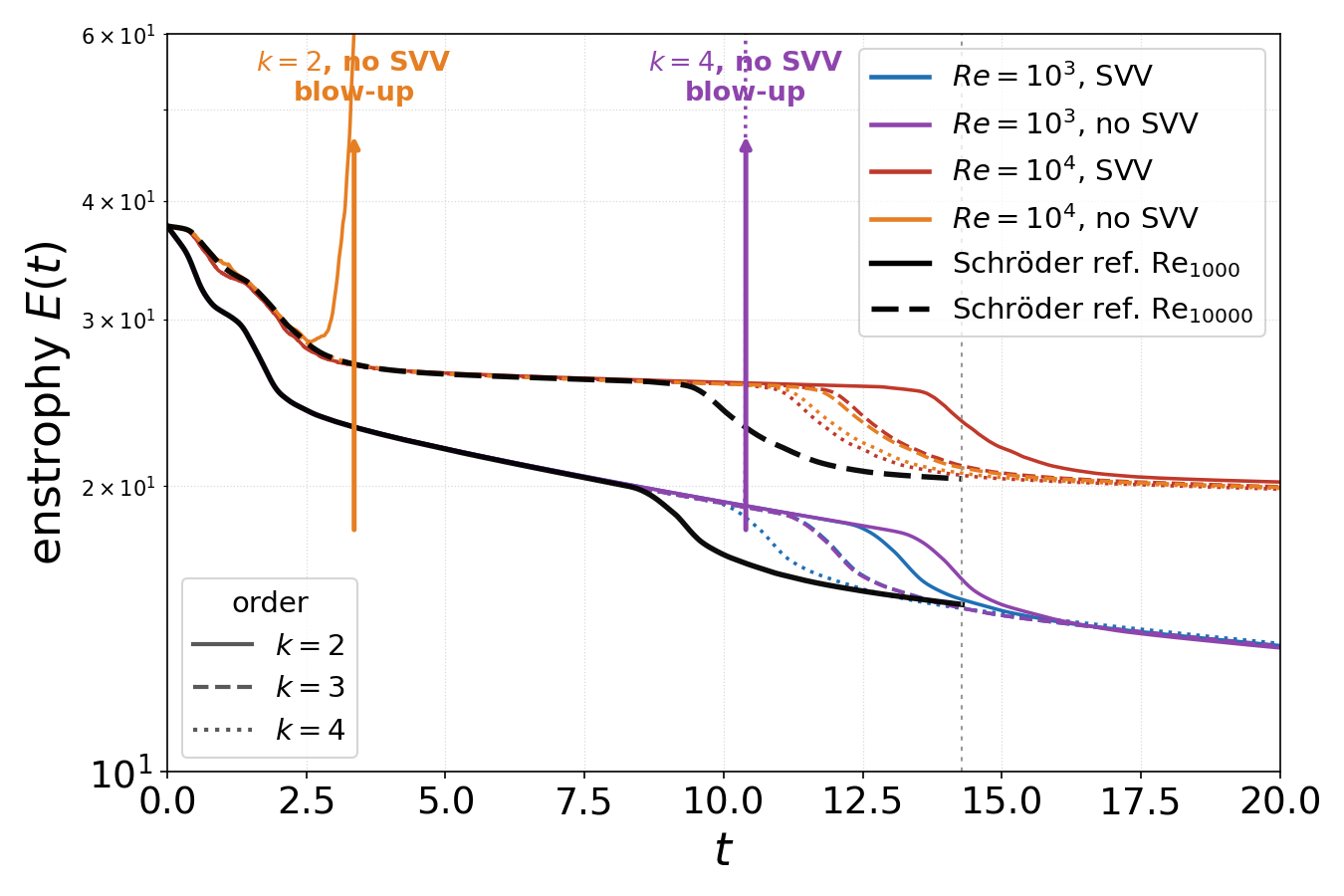}\\[1ex]
\includegraphics[width=0.49\textwidth]{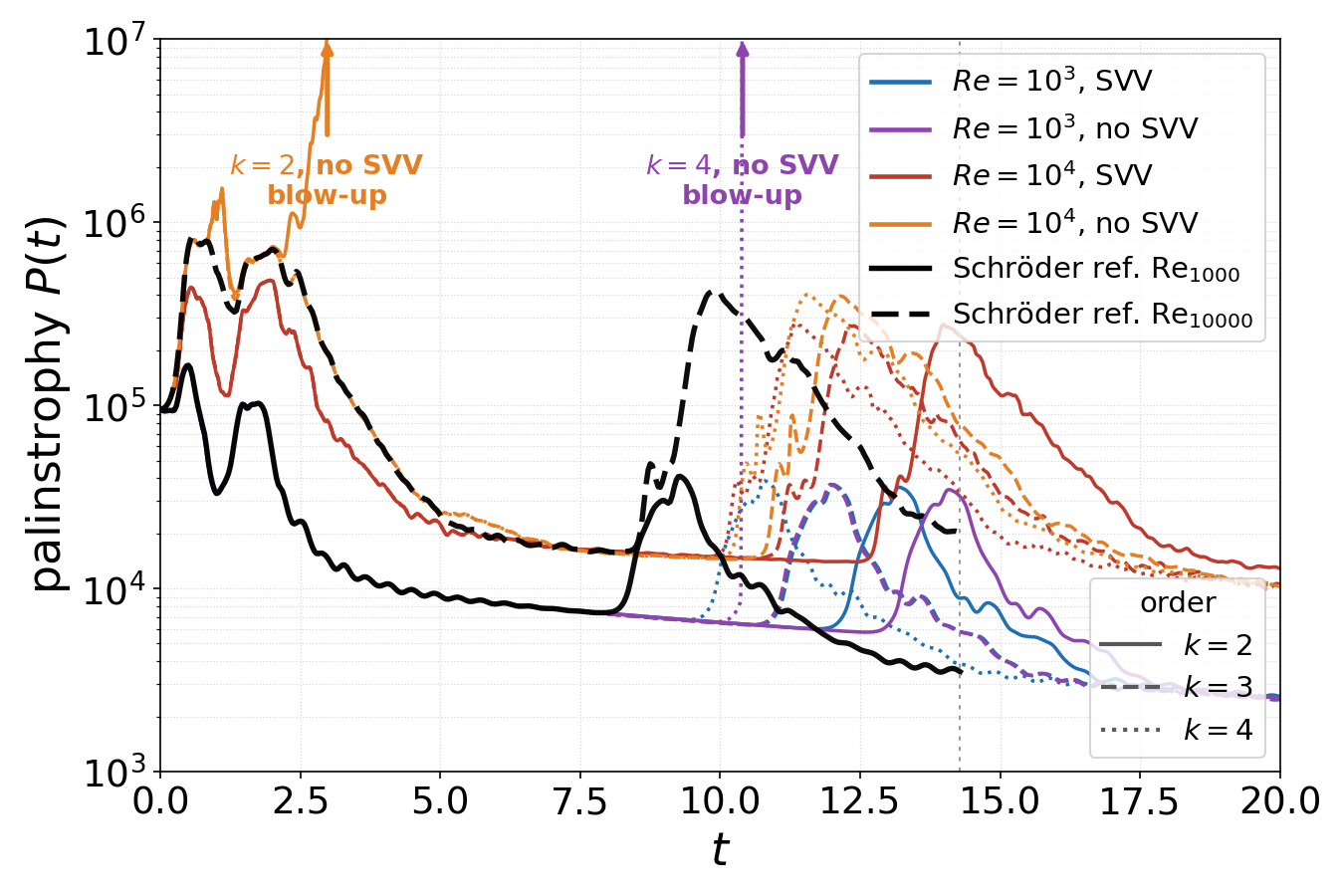}
\includegraphics[width=0.49\textwidth]{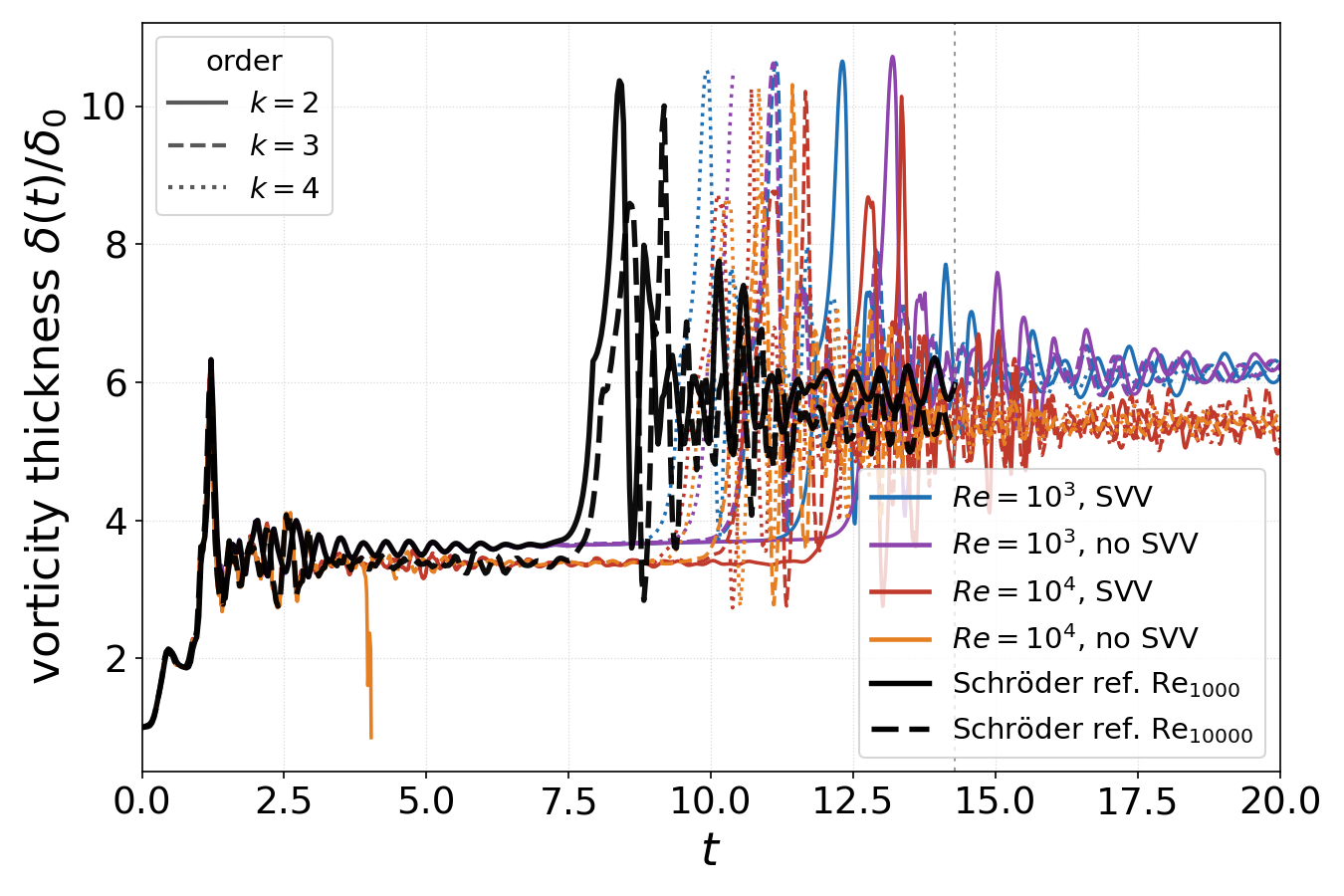}
\caption{Kelvin--Helmholtz integral diagnostics at $N=512$ for $k=2,3,4$ with and without SVV, compared with the reference solutions of \cite{Schroeder2019KH}. Top row: kinetic energy $K$ (left) and enstrophy $E$ (right); bottom row: palinstrophy $P$ (log scale, left) and vorticity thickness $\delta/\delta_0$ (right). Colour encodes the Reynolds number and SVV on/off, line style the order $k$; the reference is drawn in black (solid $\Reyn=10^3$, from Re$_{1000}$; dashed $\Reyn=10^4$, from Re$_{10000}$). Time is physical ($\bar t=28\,t$); the reference ends at $t=14.3$ ($\bar t=400$, vertical line).}
\label{fig:KH_diagnostics}
\end{figure}

\section{Conclusions}
\label{sec:concl}

The BDF--IMEX consistent splitting scheme of Huang and
Shen~\cite{HuangShen2025} for the Navier--Stokes equations is higher-order, but its error estimate carries a constant that
degenerates as $\nu^{-5}$ in the inviscid limit, and in practice the bare scheme blows up at high Reynolds number. To cure this we augment the velocity
update with a symmetric positive-semidefinite spectral-vanishing-viscosity
(SVV) operator $S_N=-\eps_N\diver(\bm{\mathcal Q}_N\nabla)$, built from
the Maday--Kaber--Tadmor kernel~\citep{Tadmor1989,MadayKaberTadmor1993}
applied \emph{directionally}~\citep{SeveracSerre2007,ChenPasquettiXu2021}, so
that each mode is damped along the direction in which it is under-resolved
while the low, resolved modes are left untouched. The modification is
implementationally trivial, a single diagonal correction in the
simultaneous-diagonalization eigenbasis, and preserves the per-step cost and
the unconditional linear stability of the base scheme, while carrying through
unchanged to every order $k=2,3,4$ of the BDF--IMEX family (only the
coefficients $a_{k,k},b_{k,k-1}$ in \eqref{eq:lambda-SVV} change). Because the
added operator is, mode by mode, a scalar multiple of the Laplacian, it leaves
the structure of the error analysis intact.

Analytically (Theorem~\ref{thm:main}), the SVV term contributes an
additional, $\nu$-independent coercive piece
$\eps_N\,\dt\sum\norm{\sqrt{Q_N}\Delta\bu^{n+1}}^2$ on the left of the
energy identity, giving a $\nu$-uniform bound on the high-mode part of the
discrete Laplacian. The right-hand-side constant, however, retains the
$\nu^{-5}$ scaling of the bare scheme: the Stokes-pressure absorption
forces the convection weight $\eps\sim\nu$ on the low-mode subspace,
exactly where the MKT--SVV kernel vanishes. This contrast between the
$\nu$-uniform left-hand side and the unchanged $\nu$-dependent
right-hand side is the analytical fingerprint of the intervention, and
closing the gap between this bound and the robustness observed in practice
remains an open problem.

Three two-dimensional experiments establish that robustness numerically.
In the manufactured-solution convergence test (Example~1), the
SVV scheme reproduces the design temporal order $k=2,3,4$ wherever the
temporal error dominates; the only visible effect of the stabilization is
a mild accuracy floor $\approx10^{-4}$ set by the SVV consistency term
$D_{\rm svv}\le c\,\eps_N T\sup_t\norm{\sqrt{Q_N}\Delta\bu(t)}^2$, which
surfaces only at the smallest time steps for $k=3,4$. At $\Reyn=10^4$ the
bare scheme diverges under both exact and Richardson--BE initialization,
the error reaching $10^{14}$--$10^{25}$ or overflowing, which confirms
that the instability originates in the spatial discretization; the SVV
scheme, by contrast, stays stable and convergent for every~$\dt$.

On the perturbed Kovasznay flow (Example~2), the bare scheme grows
spuriously at $\Reyn=10^3$ (a minimum near $t\approx40$ followed by growth
to $1.2\times10^{-1}$) and blows up near $t\approx10$ at $\Reyn=10^4$,
whereas the SVV scheme drives the perturbation to exponential decay back to
the steady flow. The stabilized result is insensitive to resolution (the
$N=128,256,512,1024$ curves coincide), resolves the thin outflow boundary
layer, converges under mesh refinement, and agrees with an independent
Galerkin--Newton finite-element reference in the position of the layer,
differing only in the peak amplitude that the coarser finite-element mesh
under-resolves.

The Kelvin--Helmholtz problem (Example~3) probes the demanding
high-Reynolds, perturbation-sensitive regime that motivates this study.
With SVV the roll-up of the shear layer is captured cleanly, and the
vorticity fields at $N=128,256,512$ agree to plotting accuracy for every
order $k=2,3,4$ throughout the reliable regime, so the stabilized scheme
is resolution- and order-robust rather than something that must be chased
with mesh refinement. Without SVV the bare scheme is only marginally usable
at $\Reyn=10^3$ and is swamped by grid-scale oscillations at $\Reyn=10^4$
(spurious vorticity $\min\nabla\times\bu\approx-106$, far outside the
physical range) before diverging. Validated against the reference integral
time series of \citet{Schroeder2019KH}, the SVV kinetic energy, enstrophy,
palinstrophy and vorticity thickness are indistinguishable from the
reference through the reliable regime at both Reynolds numbers; beyond the
roll-up the curves separate only in the \emph{timing} of the final,
notoriously perturbation-sensitive vortex pairing, while the kinetic
energy, insensitive to that pairing, continues to track the reference
throughout. Taken together, the three examples show that the spectral
vanishing viscosity acts precisely where it is needed, on the unresolved
high modes, and is what makes the higher-order consistent splitting scheme
dependable at high Reynolds number. The implementation here is
two-dimensional, but the extension to three space dimensions is
straightforward, since the eigenbasis solver carries over directly.

Several questions remain open for future work. On the
theoretical side, the robustness of the error bound in the viscosity could be sharpened: removing the $\nu^{-5}$
factor from Theorem~\ref{thm:main} appears to require a quantitative
low/high-mode splitting of the trilinear convective term, in the spirit of
\cite{MadayKaberTadmor1993} for conservation laws and
\cite{GuoMaTadmor2001} for multi-dimensional spectral viscosity, carried
through with the additional bookkeeping of the BDF--IMEX Taylor-shift
operators. Finally, the SVV accuracy floor, the saturation
$\approx10^{-4}$ observed for $k=3,4$ in Example~1, is governed by the SVV
amplitude $\eps_N=C_{\rm svv}/M$ and the kernel cut-off $m_N$. A sharper,
problem-adapted choice of these parameters, or a defect correction that
removes the SVV consistency error once the flow is resolved, would let the
higher-order variants realize their full temporal accuracy without
sacrificing robustness.

\appendix
\makeatletter
\renewcommand\@seccntformat[1]{Appendix~\csname the#1\endcsname\quad}
\makeatother

\section{Proof of the reduction \textup{(P1)}--\textup{(P2)}}
\label{app:svv-proof}
We prove the reduction (P1)--(P2) of Lemma~\ref{lem:svv-props} for the operator
$S_N$ of Definition~\ref{def:svv}; the remaining properties (P3)--(P6) are the
classical spectral-vanishing-viscosity facts recalled after the lemma. Throughout,
$\bu=\sum_{ij}\widehat u_{ij}\Psi_{ij}$ and
$\bv=\sum_{ij}\widehat v_{ij}\Psi_{ij}$ are in $\bm V_N$, and we use the
orthonormality $(\Psi_{ij},\Psi_{i'j'})=\delta_{ii'}\delta_{jj'}$ together with the
stiffness relations $-\Delta\Psi_{ij}=(\mu_i+\mu_j)\Psi_{ij}$,
$(\partial_x\Psi_{ij},\partial_x\Psi_{i'j'})=\mu_i\delta_{ii'}\delta_{jj'}$, and
$(\partial_y\Psi_{ij},\partial_y\Psi_{i'j'})=\mu_j\delta_{ii'}\delta_{jj'}$.

\medskip\noindent{\bf (P1) Strong form.}\\
Both $S_N$ and $-\eps_N Q_N\Delta$ are diagonal in $\{\Psi_{ij}\}$, so it suffices
to compare them mode by mode. Using $-\Delta\Psi_{ij}=(\mu_i+\mu_j)\Psi_{ij}$,
$Q_N\Psi_{ij}=\widehat Q_{ij}\Psi_{ij}$, and
$\widehat Q_{ij}(\mu_i+\mu_j)=\widehat Q_i\mu_i+\widehat Q_j\mu_j$,
\[
-\eps_N Q_N\Delta\,\Psi_{ij}
=\eps_N\widehat Q_{ij}(\mu_i+\mu_j)\Psi_{ij}
=\eps_N\bigl(\widehat Q_i\mu_i+\widehat Q_j\mu_j\bigr)\Psi_{ij}
=S_N\Psi_{ij}.
\]
Agreement on a basis of $\bm V_N$ gives $S_N=-\eps_N Q_N\Delta$ on $\bm V_N$.

\medskip\noindent{\bf (P2) Directional form.}\\
For $\bu,\bv\in\bm V_N\subset\bm H^1_0(\Om)$, integration by parts (the boundary
term vanishes) gives
\[
-\eps_N\bigl(\diver(\bm{\mathcal Q}_N\nabla\bu),\bv\bigr)
=\eps_N\bigl(\bm{\mathcal Q}_N\nabla\bu,\nabla\bv\bigr)
=\eps_N\Bigl[(\widehat Q^{\,x}\partial_x\bu,\partial_x\bv)
            +(\widehat Q^{\,y}\partial_y\bu,\partial_y\bv)\Bigr].
\]
With $\bu=\Psi_{ij}$ and $\bv=\Psi_{i'j'}$, since $\widehat Q^{\,x}$ scales the
$x$-mode $i$ by $\widehat Q_i$,
\[
(\widehat Q^{\,x}\partial_x\Psi_{ij},\partial_x\Psi_{i'j'})
=\widehat Q_i\mu_i\,\delta_{ii'}\delta_{jj'},\qquad
(\widehat Q^{\,y}\partial_y\Psi_{ij},\partial_y\Psi_{i'j'})
=\widehat Q_j\mu_j\,\delta_{ii'}\delta_{jj'}.
\]
Summing, $-\eps_N(\diver(\bm{\mathcal Q}_N\nabla\Psi_{ij}),\Psi_{i'j'})
=\eps_N(\widehat Q_i\mu_i+\widehat Q_j\mu_j)\delta_{ii'}\delta_{jj'}
=(S_N\Psi_{ij},\Psi_{i'j'})$. As this holds for every basis pair, the two operators
coincide on $\bm V_N$. \qed

\section{The Fourier--cosine/sine realization for the Kelvin--Helmholtz problem}
\label{app:fourier-trig}
We describe here the spectral Fourier($x$)$\times$cosine/sine($y$) realization for the Kelvin--Helmholtz problem in Section\,\ref{sec:KH}. The time discretization $\Ak,\Bk,\Ck$ and the SVV operator $S_N=-\eps_N Q_N\Delta$ are unchanged. Only the spatial basis
differs, and it enters the solver only through the eigenpairs of the
one-dimensional operators.

\medskip
\noindent{\bf Fourier basis in $x$.}\\
\noindent
Because the flow is periodic in $x$, each field is expanded in the Fourier basis
$e_m(x)=e^{2\pi\mathrm{i} m x}$, $m=-N_x/2+1,\dots,N_x/2$, on the uniform grid
$x_l=l/N_x$, $l=0,\dots,N_x-1$. Each $e_m$ is an eigenfunction of
$-\partial_{xx}$ with eigenvalue $k_m^2=(2\pi m)^2$, and $\partial_x$ acts as the
multiplier $2\pi\mathrm{i} m$. The Fourier pair $(e_m,k_m^2)$ plays the role that the
one-dimensional simultaneous-diagonalization pair $(\psi_i,\mu_i)$ of $X_N$ plays
in $x$ in Section~\ref{sec:scheme}.

\medskip
\noindent{\bf Cosine/sine basis in $y$.}\\
\noindent
This is the $y$-space used for the Kelvin--Helmholtz computations. The tangential velocity $u_1$ and the pressure
satisfy the Neumann condition $\partial_y(\cdot)=0$ at $y=0,1$ and are expanded in
the cosine family
\begin{equation}
c_m(y)=\cos(m\pi y),\qquad m=0,\dots,N_y-1,
\end{equation}
while the normal velocity $u_2$ satisfies the Dirichlet condition $u_2=0$ at
$y=0,1$ and is expanded in the sine family
\begin{equation}
s_m(y)=\sin(m\pi y),\qquad m=1,\dots,N_y.
\end{equation}
Both families are collocated on the cell-centered uniform grid
$y_j=(j+\tfrac12)/N_y$, $j=0,\dots,N_y-1$, which gives uniform resolution across
the shear layer. Each $c_m$ and $s_m$ is an eigenfunction of $-\partial_{yy}$ with
eigenvalue $(m\pi)^2$, so the cosine pair $\bigl(c_m,(m\pi)^2\bigr)$ and the sine
pair $\bigl(s_m,(m\pi)^2\bigr)$ are the $y$-eigenpairs used by the solver.

A field is represented in the tensor basis $\Psi_{mn}=e_m(x)\,c_n(y)$ for a cosine
field, that is $u_1$ and the pressure, and $e_m(x)\,s_n(y)$ for a sine field, that
is $u_2$. The Laplacian is diagonal,
\begin{equation}
-\Delta\,\Psi_{mn}=\bigl(k_m^2+(n\pi)^2\bigr)\,\Psi_{mn},
\end{equation}
so the Helmholtz operator of each velocity component is diagonal in the index
pair $(m,n)$.

\medskip
\noindent{\bf SVV kernel.}\\
\noindent
The Maday--Kaber--Tadmor kernel \eqref{eq:MKTkernel} is applied
\emph{directionally}, exactly as in \eqref{eq:dirkernel}. In $x$ it is indexed by
the Fourier wavenumber magnitude $|m|$, with cut-off $M_x=N_x/2$ and threshold
$m_{N,x}=\lceil\sqrt{M_x}\,\rceil$, giving $\widehat Q^{x}_{|m|}$. In $y$ it is
indexed by the mode number $n=0,\dots,N_y-1$, with cut-off $M_y=N_y$ and threshold
$m_{N,y}=\lceil\sqrt{M_y}\,\rceil$, giving $\widehat Q^{y}_n$. Both velocity
components use the same one-dimensional kernels. Writing $\mu^x_m=k_m^2$ and
$\mu^y_n=(n\pi)^2$ for the Laplacian eigenvalues in each direction, the operator
is
\begin{equation}\label{eq:SN-fourier}
\begin{aligned}
S_N\,\Psi_{mn}
&=\eps_N\bigl(\widehat Q^{x}_{|m|}\,\mu^x_m+\widehat Q^{y}_n\,\mu^y_n\bigr)\Psi_{mn}
=\eps_N\,\widehat Q_{mn}\,\bigl(\mu^x_m+\mu^y_n\bigr)\,\Psi_{mn},\\
\widehat Q_{mn}&:=\frac{\widehat Q^{x}_{|m|}\mu^x_m+\widehat Q^{y}_n\mu^y_n}
                       {\mu^x_m+\mu^y_n},
\qquad
\eps_N=\frac{C_{\rm svv}}{\max(M_x,M_y)},
\end{aligned}
\end{equation}
so that, as in \eqref{eq:Qtilde}, $S_N=-\eps_N Q_N\Delta$ with the scalar
multiplier $\widehat Q_{mn}\in[0,1]$.

The directional form matters here for a concrete reason. In the Kelvin--Helmholtz
flow the under-resolved structures are the braids and vortex sheets: thin in one
direction and long in the other. Because $\bm{\mathcal Q}_N$ judges each direction
separately, such a mode is damped according to the direction in which it is
unresolved, however smooth it may be in the other.

\medskip
\noindent{\bf Diagonal velocity update.}\\
\noindent
Let $a^\star_k$ and $b^\star_k$ denote the coefficients of $\bu^{n+1}$ in $\Ak$
and $\Bk$. Substituting the tensor basis into the SVV-stabilized momentum
equation \eqref{eq:SVV-vel} makes each velocity component a diagonal solve,
\begin{equation}
\Bigl[\frac{a^\star_k}{\dt}
+ b^\star_k\Bigl(\nu\bigl(k_m^2+(n\pi)^2\bigr)
  + \eps_N\bigl(\widehat Q^{x}_{|m|}k_m^2+\widehat Q^{y}_n(n\pi)^2\bigr)\Bigr)
\Bigr]\,\widehat u_{mn}^{\,n+1}
= \widehat R_{mn},
\end{equation}
where the right-hand side $\widehat R_{mn}$ collects the older $\Ak$ terms, the
extrapolated convection $\Ck(\bu^n)\!\cdot\!\nabla\Ck(\bu^n)$, the extrapolated
pressure gradient $\nabla\Ck(p^n)$, and the explicit part of
$(\nu+\eps_N Q_N)\Delta\Bk(\bu^{n+1})$, all in the tensor basis. The cosine
transform is used for $u_1$ and the sine transform for $u_2$. The stabilizing
operator is the same $-\eps_N Q_N\Delta\Bk(\bu^{n+1})=S_N\Bk(\bu^{n+1})$ as in
\eqref{eq:SVV-vel}.

\begin{remark}
The stabilization is identical to that of the main text. The Legendre eigenpairs
$(\psi_i,\mu_i)$ are replaced by the Fourier pair $(e_m,k_m^2)$ in $x$ and by the
cosine and sine pairs $\bigl(c_n,(n\pi)^2\bigr)$ and $\bigl(s_n,(n\pi)^2\bigr)$ in
$y$, so the Laplacian eigenvalues $(\mu_i+\mu_j)$ of Section~\ref{sec:scheme}
become $(k_m^2+(n\pi)^2)$. The energy and error estimates of
Section~\ref{sec:analysis} are stated in terms of the operators $S_N$, $Q_N$ and
$\nabla$, so they hold verbatim for this realization.
\end{remark}

\section*{Acknowledgments}
J.~Wu was partially supported by the National Science Foundation of
the United States (Grant No.~DMS-2104682 and DMS-2309748). X.~Zheng was
partially supported by NSF grant DMS-2309747. This work was supported in part by
the computational resources and services provided by the High Performance
Computing Center (HPCC) of the Institute for Cyber-Enabled Research at Michigan
State University through a collaboration program of Central Michigan University.

\bibliographystyle{plainnat}
\bibliography{ref_HS2025_SVV_paper}

\end{document}